\numberwithin{equation}{section} 
\newtheorem{theorem}{Theorem}[section]
\newtheorem*{theorem*}{Theorem}
\newtheorem{prop}[theorem]{Proposition}
\newtheorem{lemma}[theorem]{Lemma}
\newtheorem{cor}[theorem]{Corollary}
\theoremstyle{definition} 
\newtheorem{defn}[theorem]{Definition}
\theoremstyle{remark}
\newtheorem{rmk}[theorem]{Remark}
\newtheorem{eg}[theorem]{Example}
\newtheorem*{remark*}{Remark}
\newtheorem*{remarks*}{Remarks}
\newtheorem*{notation*}{Notation}
\newtheorem*{convention*}{Convention}
\def\black{\color{black}}
\def\yellow{\color{black}}
\newcommand{\op}{\mathrm{op}}
\newcommand{\cat}{\mathbb}
\newcommand{\catC}{\cat{C}}
\newcommand{\catE}{\cat{E}}
\newcommand{\catX}{\cat{X}}
\newcommand{\catY}{\cat{Y}}
\newcommand{\catZ}{\cat{Z}}
\newcommand{\catK}{\mathcal{K}}
\newcommand{\catL}{\mathcal{L}}
\newcommand{\tcat}{\mathbf}
\newcommand{\Cat}{\tcat{Cat}}
\newcommand{\vcat}{\mathcal{V}\text{-}\tcat{Cat}}
\newcommand{\Set}{\mathbf{Set}}
\newcommand{\Fin}{\mathbf{Fin}}
\newcommand{\Mnd}{\mathbf{Mnd}}
\newcommand{\Rel}{\mathbf{Rmd}}
\newcommand{\Kl}{\textrm{Kl}}
\newcommand{\Lift}{\mathbf{Lift}}
\newcommand{\RLift}{\mathbf{LiftR}}
\newcommand{\Int}{\mathbf{Cat}}
\newcommand{\Mon}{\mathbf{Mon}}
\newcommand{\Pos}{\mathbf{Pos}}
\newcommand{\algfont}{\mathrm}
\newcommand{\Salg}{\ms\text{-}\algfont{Alg}}
\newcommand{\Soalg}{\mso\text{-}\algfont{Alg}}
\newcommand{\mso}{S_0}
\newcommand{\ms}{S}
\newcommand{\REM}{\mathrm{Mod}}
\newcommand{\RRM}{\mathrm{Mod}}
\newcommand{\til}{~}
\newcommand{\compS}{(S,\,S_0)}
\title[]{Distributive laws for relative monads}
\author[]{Gabriele Lobbia}
\address[]{University of Leeds, School of Mathematics, Leeds, United Kingdom. 
ORCID: 0000-0002-2732-0317.}
\email[]{lobbia@math.muni.cz}
\begin{document}

\begin{abstract}
We introduce the notion of a distributive law between a relative monad and a monad. We call this a relative distributive law and define it in any 2-category $\catK$. In order to do that, we introduce the 2-category of relative monads in a 2-category $\catK$ with relative monad morphisms and relative monad transformations as 1- and 2-cells, respectively. We relate our definition to the 2-category of monads in $\catK$ defined by Street. \yellow Using this perspective, \black we prove two Beck-type theorems regarding relative distributive laws. We also describe what does it mean to have Eilenberg--Moore and Kleisli objects in this context and give examples in the 2-category of locally small categories. \\
\smallskip
\noindent \textbf{Keywords:} relative monads, distributive laws, 2-categories. \\
\smallskip
\noindent \textbf{MSC:} 18C15, 18C20, 18D05.
\end{abstract}

\maketitle

\section*{Acknowledgements} The author is very grateful to Martin Hyland for the helpful conversation about the definition of extension to Kleisli in this particular case. This paper also owes a lot to Nicola Gambino's suggestions and feedback. Discussions with Francesco Gallinaro and Giovanni Sold\`a helped the author dealing with some examples. This research is part of the author's PhD project, supported by an EPSRC Scholarship. The second version of this paper was improved also thanks to suggestions and comments by Nathanael Arkor and John Bourke. 

\section*{Introduction} 

\subsection*{Context}
Monads are very helpful tools both in mathematics (see \cite{BarrM:toptt}) and in computer science (see \cite{MoggiE:comp-and-monads}). They were first introduced as endofunctors $S\colon\catC\to\catC$ with natural transformations $m\colon  S^2\rightarrow S$ and $s\colon 1_{\catC}\rightarrow S$ acting as multiplication and unit. Then, Manes \cite[Definition\til3.2]{ManesE:alg-th} introduced the equivalent notion of a Kleisli triple, which relies on a mapping of objects $S\colon\textrm{Ob}(\catC)\to\textrm{Ob}(\catC)$, an extension operator sending any map $f\colon X\rightarrow SY$ to one of the type $f^\dagger\colon SX\rightarrow SY$ and a family of maps $s_X\colon X\rightarrow SX$. In recent years, monads with this description have been called \textit{no--iteration monads} or \emph{(left) extension systems}, and they have been studied in \cite{HernER:No-it-distr, ManesE:mnd-comp, MarmVaz:no-it-distr, MarmWood:ext-syst}. 

This description of monads leads to a generalisation, known as \emph{relative monads} \cite[Definition~2.1]{AltChap:mnd-no-end}. These are monad-like structures on a base functor $I\colon\catC_0\rightarrow\catC$, i.e.\ for any $X\in\catC_0$ an object $SX\in\catC$, for any $X,\,Y\in\catC_0$ a extension operator $(-)^\dagger_S\colon\catC(IX,\,SY)\rightarrow\catC(SX,\,SY)$ and a unit $s_X\colon IX\rightarrow SX$ satisfying unital and associativity laws.


In monad theory an important notion is the one of a distributive law \cite{BeckJ:disl, MarmRos:basic-distr} of a monad $T$ over another monad $S$, i.e.\ a natural transformation $d\colon ST\rightarrow TS$ satisfying four compatibility axioms. In \cite{BeckJ:disl} Beck proved that a distributive law $d\colon ST\rightarrow TS$ is equivalent to a lifting of $T$ to $S$-algebras. It is well known, and often attributed to Beck as well, that a distributive law $d\colon ST\rightarrow TS$ is also equivalent to an extension of $S$ to the Kleisli category of $T$.

In \cite[Proposition~3.5]{MarmRos:basic-distr} we find a characterization of distributive laws $d\colon ST\to TS$ in terms of $S$-algebras $\alpha\colon STS\to TS$ with some properties. This description is extended in \cite[Theorem~6.2]{MarmWood:ext-syst} to extension systems. Then \cite{HernER:No-it-distr} provides the definition of a distributive law of a right extension system with respect to a left extension system (also called a \emph{no-iteration distributive law} or a \emph{distributive law in extensive form}), where a right extension system is the dual notion of an extension system. Finally, mixed distributive laws (between a monad and a comonad) have been studied in terms of extension systems in \cite{MarmVaz:no-it-distr}.

The main aim of this paper is to develop further the theory of distributive laws by introducing the notion of a distributive law between a relative monad $T$ and a monad $S$, which we call a \emph{relative distributive laws} (\Cref{defn:rel-distr}). In particular we prove a counterpart of Beck's equivalence for relative distributive laws (\Cref{thm:rel-beck}). 

We take a 2-categorical approach to the subject, inspired by the formal theory of monads \cite{LackS:fortm, StreetR:fortm}, using also ideas from \cite{MarmolejoF:dislp}.  
Let us briefly recall how distributive laws can be treated using this point of view. First, for a 2-category $\catK$, one introduces the 2-category $\Mnd(\catK)$ of monads, monad morphisms and monad 2-cells \cite{LackS:fortm, StreetR:fortm}. Then, one introduces the notions of an (indexed) left module and left module morphism, uses them to introduce a 2-category $\Lift(\catK)$ of monads, liftings of maps to left modules and lifting of 2-cells to left modules (approach used for pseudomonads in \cite{MarmolejoF:dislp}), and proves that $\Mnd(\catK)$ and $\Lift(\catK)$ are 2-isomorphic. Once this is done, everything follows formally. First, one gets an equivalence between distributive laws (which are monads in $\Mnd(\catK)$) and liftings of monads to left modules (which are monads in $\Lift(\catK)$). Secondly, by duality, one obtains a 2-isomorphism $\Mnd(\catK^\op)^\op$ with $\Lift(\catK^\op)^\op$, which leads to the corresponding result on the equivalence between distributive laws and extensions of monads to right modules. Since representability of left and right modules corresponds to existence of Eilenberg--Moore and Kleisli objects, respectively, in that case one gets a version of Beck's theorem. Importantly, in the equivalence between distributive laws $d\colon ST\to TS$ and liftings of $T$ to the category of Eilenberg--Moore algebras of $S$, one considers $S$ as a part of an object of $\Mnd(\catK)$ and $T$ as part of a monad morphism, while in the equivalence between distributive laws $d\colon ST\to TS$ and extensions of $S$ to the Kleisli category of $T$, one considers $T$ as part of an object of $\Mnd(\catK)$ and $S$ as part of a  monad morphism. The equivalence between all these notions is possible because of the aforementioned duality and because $\Mnd(\catK^\op)^\op$ has the same objects as $\Mnd(\catK)$.

We will introduce a 2-category $\Rel(\catK)$ of relative monads, relative monad morphisms and relative monad transformations in $\catK$. This 2-category generalises the one of no-iteration monads introduced in \cite{HernER:No-it-distr}. Importantly, $\Rel(\catK)$ is more closely related to $\Mnd(\catK^\op)^\op$ rather than to $\Mnd(\catK)$. Indeed, it contains $\Mnd(\catK^\op)^\op$ as a full sub-2-category (\Cref{prop:mnd-subcat-rel}). \yellow This is motivated \black by the fact that relative monads are particularly suited to study Kleisli categories. Then, we extend some results of \cite{AltChap:mnd-no-end, FioreM:relpsm} to our setting, proving them for any 2-category. Using this point of view, we introduce a notion of distributive law of a relative monad $T$ on a monad (Definition~\ref{defn:comp-mnd}), which we call \emph{relative distributive law} (Section~\ref{sec:rel-distr-law}). We then show that it is equivalent to an object of $\Mnd(\Rel(\catK))$.

The first difference we find between our work and the formal theory of monads is that the objects of $\Rel(\catK^\op)$ are not the same as those of $\Rel(\catK)$. The issue is that the notion of operator (Definition~\ref{defn:operator}) that is involved in the definition of a relative monad does not dualise, i.e.\ an operator in $\catK^\op$ is not an operator in $\catK$. For this reason, the duality available for monads fails and we need to consider separately left and right modules. In each case, we are able to prove some, but not all, counterparts of some of the results valid in the classical case. Remarkably, the combination of these results still allows us to obtain a version of Beck's theorem (Theorem~\ref{thm:rel-beck}).

Using left modules for a relative monad we are able to find a relative adjunction (Theorem~\ref{lemma:rel-EM-to-rel-mnd}). In particular, thanks to this result we can prove that if a 2-category has relative Eilenberg--Moore objects, then any relative monad is induced by a relative adjunction (Theorem~\ref{thm:EM-obj}). On the other hand, we do not have a correspondence between relative monad morphisms and liftings of morphisms to left modules. Nevertheless, we get an equivalence between relative distributive laws and liftings of relative monads to left modules (Theorem~\ref{thm:rel-distr-eq-lift}).  

Considering right modules we do not get a relative adjunction (see Remark~\ref{rmk:rel-adj-vs-yon}). Instead, we use them to get a correspondence between relative monad morphisms and liftings of morphisms to right modules (Proposition~\ref{prop:lift-iff-rel-1-cells}). In particular, we can define a 2-category $\RLift(\catK)$ of liftings to right modules and prove that it is 2-isomorphic to $\Rel(\catK)$. Thus, we get an equivalence between  relative distributive laws and liftings of monads to right modules of a relative monad (Theorem~\ref{thm:rel-distr-eq-kl}), for which we use an argument similar to the one in \cite{StreetR:fortm}.

A further motivation for this work is to provide a first step towards the definition of a notion of a pseudodistributive law between a relative pseudomonad \cite{FioreM:relpsm} and a 2-monad \cite{BlackR:2-mnd}. Part of a Beck-like theorem has already been translated in this setting \cite[Theorem\til6.3]{FioreM:relpsm} without defining the notion of a pseudodistributive law. With this definition, it will be possible to interpret the results in \cite[Section\til7]{FioreM:relpsm} with a \emph{relative pseudodistributive law} of the presheaf relative pseudomonad on the 2-monad for free monoidal categories, or for symmetric monoidal categories etc. 

\yellow In this paper we work within a 2-category, but it is worth mentioning Arkor's PhD thesis \cite[Chapter~5]{Arkor:PhDTh} where relative monads in a \emph{proarrow equipment} are introduced. \black

\subsection*{Outline of the Paper} In Section~\ref{sect:prel} we introduce our notation and the definition of operator, which generalises the notion of a family of maps $\catC(Fx,Gy)\to\catC(F'x,G'y)$ natural in $x$ and $y$. Section~\ref{sec:rel-in-K} uses operators to translate some results for relative monads in $\Cat$ to any 2-category $\catK$. Then, in Section\til\ref{sec:Rel(K)}, we define explicitly the 2-category $\Rel(\catK)$. In Section\til\ref{sec:rel-alg} we define algebras for a relative monad and use them to describe when a relative monad is induced by a relative adjunction. In Section\til\ref{sec:rel-distr-law} we define a relative distributive law and then prove the first Beck-type theorem. Section\til\ref{sec:rrm-kl-ext} is devoted to the 2-isomorphism between $\RLift(\catK)$ and $\Rel(\catK)$ and the second Beck-type theorem. We conclude the paper with some examples. 

\section{Preliminary Definitions}
\label{sect:prel}

Throughout this chapter, for a 2-category $\catK$, we use letters $X,\,Y,\,Z$ ... to denote 0-cells, $F\colon X\rightarrow Y$, $G\colon Y\rightarrow Z$ ... for 1-cells and $f\colon F\rightarrow G$, $\alpha\colon F\rightarrow F'$ ... for 2-cells. Regarding compositions, we will write $G\circ F$ or $GF$ for composition of 1-cells. For 2-cells we denote with $\beta\circ\alpha\colon GF\rightarrow G'F'$ or juxtaposition for horizontal composition and $f'\cdot f\colon F\rightarrow H$ for vertical composition. We will denote with $(A,B)\colon O\rightarrow X;Y$ spans in $\catK$ and with $(F,G)\colon Y;X\rightarrow Z$ cospans, i.e.\ diagrams as below.
\begin{center}
\begin{tikzpicture}
\node (T) at (1,1) {$O$};
\node (T') at (2,0) {$Y$};
\node (ST') at (0,0) {$X$};
\path[->] 
(T) edge node[scale=.7] [above, xshift=-0.3cm, yshift=-0.1cm] (A) {$A$} (ST')
	edge node[scale=.7] [above, xshift=0.3cm, yshift=-0.1cm] {$B$} (T');

\node (T') at (6,1) {$Y$};
\node (ST') at (4,1) {$X$};
\node (TS0') at (5,0) {$Z$};
\path[->] 
(ST') edge node[scale=.7] [left, xshift=-0.2cm, yshift=-0.1cm] {$F$} (TS0')
(T') edge node[scale=.7] [right, xshift=0.2cm, yshift=-0.1cm] (S) {$G$} (TS0');
\end{tikzpicture}
\end{center}
When it will be clear from context, we will sometimes avoid saying explicitly which spans/cospans we are considering and might refer to them as $[F,\,G]$. For 2-categorical background we redirect the reader to \cite{Gray:Adj, Lack:comp}.

Let us recall the definition of a relative monad \cite[Definition~2.1]{AltChap:mnd-no-end}. 
\begin{defn}
\label{defn:rel-mnd-cat}
A \emph{relative monad} on a functor $I:\catC_0\rightarrow\catC$ consists of:
\begin{itemize}
\item an object mapping $S\colon\textrm{Ob}(\catC_0)\to\textrm{Ob}(\catC)$;
\item for any $x,\,y\in\catC_0$, a map $(-)^\ast_S\colon\catC(Ix,\,Sy)\rightarrow\catC(Sx,\,Sy)$ (the \emph{extension});
\item for any $x\in\catC_0$, a map $s_x\colon Ix\rightarrow Sx$ (the \emph{unit});
\end{itemize}
satisfying the following axioms 
\begin{itemize}
\item the \emph{left unital law}, i.e.\ for any $k\colon Ix\to Sy$, $k=k^\ast\cdot s_x$;
\item the \emph{right unital law}, i.e.\ for any $x\in\catC_0$, $s_x^\ast=1_{Sx}$; 
\item the \emph{associativity law}, i.e.\ for any $k\colon Ix\rightarrow Sy$ and $l\colon Iy\rightarrow Sz$, $(l^\ast\cdot k)^\ast=l^\ast\cdot k^\ast$.
\end{itemize}
\end{defn}

Given a relative monad, it follows that $S$ is functorial and $s$ and $(-)^\ast_S$ natural (see \cite{AltChap:mnd-no-end}). A relative monad with $I=1_{\catC}$ is a no-iteration monad (also called extension system \cite[Definition~2.3]{MarmWood:ext-syst}). The notion of a no-iteration monad can be generalised to any 2-category $\catK$ thanks to the definition of \emph{pasting operators} \cite[Definition~2.1]{MarmWood:ext-syst}, which is a mapping of 2-cells as shown below
\begin{center}
\begin{tikzpicture}
\node (T) at (0,2) {$O$};
\node (T') at (2,2) {$Y$};
\node (TS0') at (2,0) {$Z$};
\path[->] 
(T) edge[dotted, bend right] node[scale=.7] [left, xshift=-0.5cm, yshift=0.5cm] (A) {$A$} (TS0')
	edge[dotted] node[scale=.7] [above] {$B$} (T')
(T') edge node[scale=.7] [right] (S) {$S$} (TS0');
\draw[-{Implies},double distance=1.5pt,shorten >=15pt,shorten <=15pt] (A) to node[scale=.7] [above] {$f$} (S);

\node (T) at (5,2) {$O$};
\node (T') at (7,2) {$Y$};
\node (ST') at (5,0) {$Z$};
\node (TS0') at (7,0) {$Z'$,};
\path[->] 
(ST') edge node[scale=.7] [below] {$T$} (TS0')
(T) edge[dotted] node[scale=.7] [left] (A') {$A$} (ST')
	edge[dotted] node[scale=.7] [above] {$B$} (T')
(T') edge node[scale=.7] [right] (U) {$U$} (TS0');
\draw[-{Implies},double distance=1.5pt,shorten >=20pt,shorten <=20pt] (A') to node[scale=.7] [above] {$f^\#$} (U);

\node (a) at (3,1) {};
\node (b) at (4,1) {};
\draw[->,decorate,decoration=snake] (a) to node[scale=.7] [above,yshift=0.2cm] {$(-)^\#$} (b);
\end{tikzpicture}
\end{center}
indexed on spans $(A,B)\colon O\rightarrow Z;Y$ and satisfying two axioms. Similarly, the extension of a relative monad can be expressed as a mapping of the form
\begin{center}
\begin{tikzpicture}
\node (T) at (0,2) {\textbf{1}};
\node (T') at (2,2) {$\catC_0$};
\node (ST') at (0,0) {$\catC_0$};
\node (TS0') at (2,0) {$\catC$};
\path[->] 
(ST') edge node[scale=.7] [below] {$I$} (TS0')
(T) edge[dotted] node[scale=.7] [left] (A) {$x$} (ST')
	edge[dotted] node[scale=.7] [above] {$y$} (T')
(T') edge node[scale=.7] [right] (S) {$S$} (TS0');
\draw[-{Implies},double distance=1.5pt,shorten >=20pt,shorten <=20pt] (A) to node[scale=.7] [above] {$k$} (S);

\node (T) at (5,2) {\textbf{1}};
\node (T') at (7,2) {$\catC_0$};
\node (ST') at (5,0) {$\catC_0$};
\node (TS0') at (7,0) {$\catC$,};
\path[->] 
(ST') edge node[scale=.7] [below] {$S$} (TS0')
(T) edge[dotted] node[scale=.7] [left] (A') {$x$} (ST')
	edge[dotted] node[scale=.7] [above] {$y$} (T')
(T') edge node[scale=.7] [right] (U) {$S$} (TS0');
\draw[-{Implies},double distance=1.5pt,shorten >=20pt,shorten <=20pt] (A') to node[scale=.7] [above] {$k^\ast_S$} (U);

\node (a) at (3,1) {};
\node (b) at (4,1) {};
\draw[->,decorate,decoration=snake] (a) to node[scale=.7] [above,yshift=0.2cm] {$(-)^\ast_S$} (b);
\end{tikzpicture}
\end{center}
where \textbf{1} is the terminal category and $x,\,y\colon\textbf{1}\to\catC_0$ are the constant functors to $x$ and $y$ respectively, satisfying two axioms. With this in mind, we can see how the next definition generalises pasting operators and gives us a way to define a relative monad in any 2-category. 
\begin{defn}
\label{defn:operator}
Let 
\begin{center}
\begin{tikzpicture}
\node (T) at (1,0) {$Z$};
\node (T') at (2,1) {$Y$};
\node (ST') at (0,1) {$X$};
\path[->] 
(ST') edge node[scale=.7] [left, xshift=-0.2cm, yshift=-0.1cm] (A) {$F$} (T)
(T')	edge node[scale=.7] [right, xshift=0.2cm, yshift=-0.1cm] {$G$} (T);

\node (T') at (6,1) {$Y$};
\node (ST') at (4,1) {$X$};
\node (TS0') at (5,0) {$Z'$};
\path[->] 
(ST') edge node[scale=.7] [left, xshift=-0.2cm, yshift=-0.1cm] {$F'$} (TS0')
(T') edge node[scale=.7] [right, xshift=0.2cm, yshift=-0.1cm] (S) {$G'$} (TS0');
\end{tikzpicture}
\end{center}
be two cospans in a 2-category $\catK$. An \emph{operator} $(-)^\#\colon [F,\,G]\rightarrow[F',\,G']$ is a family of functions, for any span of arrows $(A,\,B)\colon O\rightarrow X;Y$
$$(-)^\#_{A,B}:\catK[O,Z](FA,\,GB)\rightarrow\catK[O,Z'](F'A,\,G'B)$$
\begin{center}
\begin{tikzpicture}
\node (T) at (0,2) {$O$};
\node (T') at (2,2) {$Y$};
\node (ST') at (0,0) {$X$};
\node (TS0') at (2,0) {$Z$};
\path[->] 
(ST') edge node[scale=.7] [below] {$F$} (TS0')
(T) edge[dotted] node[scale=.7] [left] (A) {$A$} (ST')
	edge[dotted] node[scale=.7] [above] {$B$} (T')
(T') edge node[scale=.7] [right] (S) {$G$} (TS0');
\draw[-{Implies},double distance=1.5pt,shorten >=20pt,shorten <=20pt] (A) to node[scale=.7] [above] {$f$} (S);

\node (T) at (5,2) {$O$};
\node (T') at (7,2) {$Y$};
\node (ST') at (5,0) {$X$};
\node (TS0') at (7,0) {$Z'$,};
\path[->] 
(ST') edge node[scale=.7] [below] {$F'$} (TS0')
(T) edge[dotted] node[scale=.7] [left] (A') {$A$} (ST')
	edge[dotted] node[scale=.7] [above] {$B$} (T')
(T') edge node[scale=.7] [right] (U) {$G'$} (TS0');
\draw[-{Implies},double distance=1.5pt,shorten >=20pt,shorten <=20pt] (A') to node[scale=.7] [above] {$f^\#$} (U);

\node (a) at (3,1) {};
\node (b) at (4,1) {};
\draw[->,decorate,decoration=snake] (a) to node[scale=.7] [above,yshift=0.2cm] {$(-)^\#$} (b);
\end{tikzpicture}
\end{center}
satisfying the following axioms:
\begin{itemize}
\item \emph{indexing naturality}, i.e.\ for any diagram \raisebox{-40pt}{
\begin{tikzpicture}
\node (O') at (-1,2.5) {$O'$};
\node (T) at (0,2) {$O$};
\node (T') at (2,2) {$Y$};
\node (ST') at (0,0) {$X$};
\node (TS0') at (2,0) {$Z$};
\path[->] 
(ST') edge node[scale=.7] [below] {$F$} (TS0')
(T) edge[dotted] node[scale=.7] [left] (A) {$A$} (ST')
	edge[dotted] node[scale=.7] [above] {$B$} (T')
(T') edge node[scale=.7] [right] (S) {$G$} (TS0')
(O') edge node[scale=.7] [above, xshift=0.2cm] {$P$} (T);
\draw[-{Implies},double distance=1.5pt,shorten >=20pt,shorten <=20pt] (A) to node[scale=.7] [above] {$f$} (S);
\end{tikzpicture} }
, $(fP)^\#=f^\#P$;

\item \emph{left naturality}, i.e.\ for any diagram 
\raisebox{-35pt}{
\begin{tikzpicture}
\node (T) at (0,2) {$O$};
\node (T') at (2,2) {$Y$};
\node (ST') at (0,0) {$X$};
\node (TS0') at (2,0) {$Z$};
\path[->] 
(ST') edge node[scale=.7] [below] {$F$} (TS0')
(T) edge[dotted, bend left] node[scale=.7] [right] (A) {$A$} (ST')
	edge[dotted, bend right] node[scale=.7] [left] (A') {$A'$} (ST')
	edge[dotted] node[scale=.7] [above] {$B$} (T')
(T') edge node[scale=.7] [right] (S) {$G$} (TS0');
\draw[-{Implies},double distance=1.5pt,shorten >=8pt,shorten <=8pt] (A) to node[scale=.7] [above] {$f$} (S);
\draw[-{Implies},double distance=1.5pt,shorten >=5pt,shorten <=5pt] (A') to node[scale=.7] [above, yshift=0.1cm] {$\alpha$} (A);
\end{tikzpicture}  }
, $(f\cdot F\alpha)^\#=f^\#\cdot F'\alpha$; 
 
\item \emph{right naturality}, i.e.\ for any diagram  \raisebox{-35pt}{
\begin{tikzpicture}
\node (T) at (0,2) {$O$};
\node (T') at (2,2) {$Y$};
\node (ST') at (0,0) {$X$};
\node (TS0') at (2,0) {$Z$};
\path[->] 
(ST') edge node[scale=.7] [below] {$F$} (TS0')
(T) edge[dotted] node[scale=.7] [left] (A) {$A$} (ST')
	edge[dotted, bend right] node[scale=.7] [below] (B) {$B$} (T')
		edge[dotted, bend left] node[scale=.7] [above] (B') {$B'$} (T')
(T') edge node[scale=.7] [right] (S) {$G$} (TS0');
\draw[-{Implies},double distance=1.5pt,shorten >=20pt,shorten <=20pt] (A) to node[scale=.7] [below] {$f$} (S);
\draw[-{Implies},double distance=1.5pt,shorten >=5pt,shorten <=5pt] (B) to node[scale=.7] [right, xshift=0.1cm] {$\beta$} (B');
\end{tikzpicture} }
, $(G\beta\cdot f)^\#=G'\beta\cdot f^\#$. 
\end{itemize}
\end{defn}

The axioms of indexing, left and right naturality represent naturality in $O$, $A$ and~$B$ respectively. When we consider $F=1_X$ (so an operator $[1_X,\,G]\to[F',\,G']$) we get back the definition of \textit{pasting operator} given in \cite{MarmWood:ext-syst}. The conditions of whiskering and blistering of \cite{MarmWood:ext-syst} correspond to indexing and left naturality, while right naturality is deducible from \cite[Lemma~2.2]{MarmWood:ext-syst} and the interchange law of $\catK$. Pasting operators are also studied in \cite{HernER:No-it-distr}, where both left and right pasting operators are introduced. Following the reasoning above we can see that right pasting operators are equivalent to operators in $\catK^\op$ with $F=1_X$. 

\begin{eg} 
\label{eg:op} \hfill
\begin{enumerate}[(i)]
\item \label{eg:op-cat} Let us consider $\catK=\Cat$. We will show that, in this particular 2-category, an operator is equivalent to a family of maps indexed by pairs of objects. Let $\catX,\,\catY,\,\catZ$ and $\catZ'$ be categories and $(F,G)\colon \catY;\catX\rightarrow \catZ$ and $(F',G')\colon \catY;\catX\rightarrow \catZ'$ be two cospans in $\Cat$.
Let $(-)^\#\colon [F,\,G]\rightarrow[F',\,G']$ be an operator. Then, if we consider the span given by $(x,\,y)\colon\textbf{1}\to\catX;\catY$ with $x\in\catX$ and $y\in\catY$, the operator $(-)^\#$ gives us a family of maps
\begin{center}
$(-)_{x,y}^\#\colon \catZ(Fx,Gy)\rightarrow \catZ'(F'x,G'y)$ \\
\begin{tikzpicture}
\node (T) at (0,2) {\textbf{1}};
\node (T') at (2,2) {$\catY$};
\node (ST') at (0,0) {$\catX$};
\node (TS0') at (2,0) {$\catZ$};
\path[->] 
(ST') edge node[scale=.7] [below] {$F$} (TS0')
(T) edge[dotted] node[scale=.7] [left] (A) {$x$} (ST')
	edge[dotted] node[scale=.7] [above] {$y$} (T')
(T') edge node[scale=.7] [right] (S) {$G$} (TS0');
\draw[-{Implies},double distance=1.5pt,shorten >=20pt,shorten <=20pt] (A) to node[scale=.7] [above] {$f$} (S);

\node (T) at (5,2) {\textbf{1}};
\node (T') at (7,2) {$\catY$};
\node (ST') at (5,0) {$\catX$};
\node (TS0') at (7,0) {$\catZ'$};
\path[->] 
(ST') edge node[scale=.7] [below] {$F'$} (TS0')
(T) edge[dotted] node[scale=.7] [left] (A') {$x$} (ST')
	edge[dotted] node[scale=.7] [above] {$y$} (T')
(T') edge node[scale=.7] [right] (U) {$G'$} (TS0');
\draw[-{Implies},double distance=1.5pt,shorten >=20pt,shorten <=20pt] (A') to node[scale=.7] [above] {$f^\#$} (U);

\node (a) at (3,1) {};
\node (b) at (4,1) {};
\draw[->,decorate,decoration=snake] (a) to node[scale=.7] [above,yshift=0.2cm] {$(-)^\#$} (b);
\end{tikzpicture}
\end{center}
Left and right naturality of $(-)^\#$ tell us that these maps are natural in $x$ and $y$ respectively. Conversely, if we have such a natural family of maps, then we can construct a pasting operator in the following way. For any span $(A,\,B)\colon \mathbb{O}\rightarrow\catX;\catY$ and any natural transformation $f\colon FA\rightarrow GB$, we define the component of the natural transformation $f^\#\colon FA\rightarrow GB$ at $o\in\mathbb{O}$ as 
\begin{center}
$(f)^\#_{Ao,Bo}\colon FAo\longrightarrow GBo$ \\
\begin{tikzpicture}
\node (T) at (0,2) {\textbf{1}};
\node (T') at (2,2) {$\catY$};
\node (ST') at (0,0) {$\catX$};
\node (TS0') at (2,0) {$\catZ$};
\path[->] 
(ST') edge node[scale=.7] [below] {$F$} (TS0')
(T) edge[dotted] node[scale=.7] [left] (A) {$Ao$} (ST')
	edge[dotted] node[scale=.7] [above] {$Bo$} (T')
(T') edge node[scale=.7] [right] (S) {$G$} (TS0');
\draw[-{Implies},double distance=1.5pt,shorten >=20pt,shorten <=20pt] (A) to node[scale=.7] [above, yshift=0.1cm] {$f$} (S);

\node (T) at (5,2) {\textbf{1}};
\node (T') at (7,2) {$\catY$};
\node (ST') at (5,0) {$\catX$};
\node (TS0') at (7,0) {$\catZ'.$};
\path[->] 
(ST') edge node[scale=.7] [below] {$F'$} (TS0')
(T) edge[dotted] node[scale=.7] [left] (A') {$Ao$} (ST')
	edge[dotted] node[scale=.7] [above] {$Bo$} (T')
(T') edge node[scale=.7] [right] (U) {$G'$} (TS0');
\draw[-{Implies},double distance=1.5pt,shorten >=20pt,shorten <=20pt] (A') to node[scale=.7] [above, yshift=0.1cm] {$(f)^\#_{Ao,Bo}$} (U);

\node (a) at (3,1) {};
\node (b) at (4,1) {};
\draw[->,decorate,decoration=snake] (a) to node[scale=.7] [above,yshift=0.2cm] {$(-)^\#_{Ao,Bo}$} (b);
\end{tikzpicture}
\end{center}
Using naturality in $a$ and $b$ and naturality for $f$ we can prove that $f^\#$ is also a natural transformation. Moreover this definition satisfies all the axioms of an operator: indexing naturality follows directly from the definition, left and right naturality follow from naturality in $a$ and $b$ respectively.
\item \label{eg:enr-oper}
Let us look at the notion of operator when we set $\catK=\vcat$, the 2-category of $\mathcal{V}$-categories with $\mathcal{V}$ a monoidal category. For background in enriched category theory we redirect the reader to \cite{KellyG:bascec}.

We have a description similar to the one in the previous example. Let $\catX,\,\catY,\,\catZ$ and $\catZ'$ be $\mathcal{V}$-categories and $(F,G)\colon \catY;\catX\rightarrow \catZ$ and $(F',G')\colon \catY;\catX\rightarrow \catZ'$ be two cospans in $\vcat$. Also in this case an operator $(-)^\#\colon [F,\,G]\rightarrow[F',\,G']$ is equivalent to a family of functions, for any $a\in \catX$ and $b\in \catY$, 
$$(-)_{a,b}^\#\colon \overline{\catZ}(Fa,Gb)\longrightarrow \overline{\catZ'}(F'a,G'b)$$
natural in $a$ and $b$, where $\overline{\catZ}$ and $\overline{\catZ'}$ are the underlying categories of $\catZ$ and $\catZ'$, respectively. We get this characterisation by setting $\mathbb{O}=\mathbb{I}$ the unit $\mathcal{V}$-category, which has one object and the monoidal unit $I\in\mathcal{V}$ as hom-object. 

\item \label{eq:enr-fam-op} Let us consider another important example in $\catK=\vcat$ (we will use the same notation as above). If we have a natural family of maps in $\mathcal{V}$, for any $a\in \catX$ and $b\in \catY$, 
$$(-)_{a,b}^\#\colon\catZ(Fa,Gb)\longrightarrow \catZ'(F'a,G'b),$$
then we can construct a pasting operator in the following way. For any span $(A,\,B)\colon\mathbb{O}\rightarrow\catX;\catY$ and any $\mathcal{V}$-natural transformation $f\colon FA\rightarrow GB$, then we define the $o\in\mathbb{O}$ component of $f^\#$ as 
\begin{center}
$I\xrightarrow{f_o}\catZ(FAo,\,GBo)\xrightarrow{(-)^\#_{Ao,Bo}}\catZ'(F'Ao,\,G'Bo)$.
\end{center}
Using naturality in $a$ and $b$ and $\mathcal{V}$-naturality for $f$ we can prove that $f^\#$ is also a $\mathcal{V}$-natural transformation. Moreover this definition satisfies all the axioms of an operator: indexing naturality follows directly from the definition, left and right naturality follow from naturality in $a$ and $b$ respectively.

\item \yellow Let $\mathcal{V}$ be a strict monoidal category and let us consider $\catK=\Sigma\mathcal{V}$ the one-object 2-category with hom-category $\mathcal{V}$. Then, two cospans in $\Sigma\mathcal{V}$ 
\begin{center}
\begin{tikzpicture}
\node (T) at (1,0) {$\ast$};
\node (T') at (2,1) {$\ast$};
\node (ST') at (0,1) {$\ast$};
\path[->] 
(ST') edge node[scale=.7] [left, xshift=-0.2cm, yshift=-0.1cm] (A) {$X$} (T)
(T')	edge node[scale=.7] [right, xshift=0.2cm, yshift=-0.1cm] {$Y$} (T);

\node (T') at (6,1) {$\ast$};
\node (ST') at (4,1) {$\ast$};
\node (TS0') at (5,0) {$\ast$};
\path[->] 
(ST') edge node[scale=.7] [left, xshift=-0.2cm, yshift=-0.1cm] {$X'$} (TS0')
(T') edge node[scale=.7] [right, xshift=0.2cm, yshift=-0.1cm] (S) {$Y'$} (TS0');
\end{tikzpicture}
\end{center}
are two pair of objects $X,Y$ and $X',Y'$ in $\mathcal{V}$. Thus, it is easy to check that an operator $[X,Y]\to[X',Y']$ in $\Sigma\mathcal{V}$ is a natural family of maps 
$$\mathcal{V}(X\otimes A,Y\otimes B)\to\mathcal{V}(X'\otimes A,Y'\otimes B),$$ i.e. a natural transformation between the profunctors 
$$\mathcal{V}(X\otimes -,Y\otimes -)\to\mathcal{V}(X'\otimes -,Y'\otimes -).$$ \black

\item \yellow Let $\catC$ be a category and let us consider $\catK=\tilde{\catC}$ the locally discrete 2-category associated to $\catC$. Then, a pair of maps $F,G\colon X\to Y$ in $\catC$ are jointly monic if and only if there exists an operator $[F,G]\to[1_X,1_X]$ in $\tilde{\catC}$. \black
\end{enumerate}
\end{eg}

Using operators we can also define relative adjunctions in $\catK$ as follows.  

\begin{defn}
Let $I\colon C_0\rightarrow C$ be a 1-cell in $\catK$. A \emph{relative adjunction in $\catK$} over $I$, denoted as $F {\,}_I\!\dashv G$ consists of  an object $D$ in $\catK$ together with:
\begin{itemize}
\item two 1-cells $F\colon C_0\rightarrow D$ and $G\colon D\rightarrow C$;
\item a 2-cell $\iota\colon I\rightarrow GF$;
\end{itemize}
such that the operator $G(-)\iota\colon [F,\,1_D]\rightarrow[I,G]$ induces isomorphisms, for any span $(A,B)\colon O\rightarrow C_0;D$,
$$\catK[O,\,D](FA,\,B)\xrightarrow{\sim} \catK[O,\,C](IA,\,GB)$$
\end{defn}

\begin{rmk}
By taking $I=1_C$ we see that this definition generalises that of an adjunction in $\catK$ \cite[Section~2.1]{Lack:comp}. Additionally, considering the case $\catK=$~$\Cat$ we see that it also generalises the definition of relative adjunction defined in \cite[Definition~2.2]{Ulm:prop-rel-adj}.
\end{rmk}

In \cite[Proposition~7]{StrWal:yon-str} we find an alternative definition of a relative adjunction using absolute left liftings. The next proposition shows that our definition with operators is equivalent to the one with liftings.

\begin{prop}
\label{prop:rel-adj-abs-lift}
Given the following diagram in $\catK$ \yellow 
\[\begin{tikzcd}[ampersand replacement=\&]
	\& D \\
	{C_0} \&\& C
	\arrow["F", from=2-1, to=1-2]
	\arrow["G", from=1-2, to=2-3]
	\arrow[""{name=0, anchor=center, inner sep=0}, "I"', from=2-1, to=2-3]
	\arrow["\iota"'{xshift=0.1cm}, shorten <=6pt, shorten >=6pt, Rightarrow, from=0, to=1-2]
\end{tikzcd}\] \black 
then, $F$ is an absolute left lifting of $I$ along $G$ if and only if $F_I\dashv G$. 
\end{prop}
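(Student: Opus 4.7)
The plan is to observe that both conditions are characterisations of the same universal property expressed through the 2-cell $\iota \colon I \to GF$, with the equivalence obtained by unpacking each definition against the formula $\beta \mapsto G\beta \cdot \iota A$. Concretely, for a span $(A,B) \colon O \to C_0;D$, the operator $G(-)\iota$ sends a 2-cell $\beta \colon FA \to B$ to the 2-cell $G\beta \cdot \iota A \colon IA \to GB$. Both definitions assert that, as $(A,B)$ varies, this assignment is bijective; the only real work is checking that the two formulations of this bijectivity match up.

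First I would verify that, given any 2-cell $\iota \colon I \to GF$, the assignment $G(-)\iota$ is automatically an operator $[F,1_D] \to [I,G]$ in the sense of Definition~\ref{defn:operator}. Indexing naturality $(fP)^\# = f^\# P$ is immediate from the associativity of whiskering. Right naturality $(G\beta \cdot f)^\# = G\beta \cdot f^\#$ is again a direct computation: $G(G\beta \cdot f)\cdot \iota A = G\beta \cdot Gf \cdot \iota A$. Left naturality, for $\alpha \colon A' \to A$, reduces to the identity $GF\alpha \cdot \iota A' = \iota A \cdot I\alpha$, which is precisely the interchange law applied to $\iota$ and $\alpha$ in $\catK$.

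For the implication $(\Leftarrow)$, assume $F \,{}_I\!\dashv G$ with unit $\iota$. Fix $A \colon O \to C_0$ and any $B \colon O \to D$ together with a 2-cell $\alpha \colon IA \to GB$. The defining isomorphism
\[
\catK[O,D](FA,B) \xrightarrow{\sim} \catK[O,C](IA,GB),\qquad \beta \mapsto G\beta \cdot \iota A,
\]
produces a unique $\beta \colon FA \to B$ with $G\beta \cdot \iota A = \alpha$. This is exactly the statement that $FA$ is a left lifting of $IA$ along $G$ with unit $\iota A$; since this holds for every $A \colon O \to C_0$, the lifting $(F,\iota)$ is absolute.

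For the converse $(\Rightarrow)$, assume $F$ is an absolute left lifting of $I$ along $G$ with cell $\iota \colon I \to GF$. Then for every $A \colon O \to C_0$ the pasting $\iota A$ exhibits $FA$ as a left lifting of $IA$ along $G$, which is the bijectivity of $\beta \mapsto G\beta \cdot \iota A$ for every $B \colon O \to D$. Combined with the first step, which shows this family is automatically an operator, we recover the relative adjunction data. I do not expect any real obstacle: the proof is a direct comparison of universal properties, and the only point requiring care is checking that the operator axioms follow formally from the definition of $\iota$ as a 2-cell, which is precisely what the interchange law in $\catK$ delivers.
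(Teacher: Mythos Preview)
Your proposal is correct and follows essentially the same approach as the paper: both directions amount to unpacking the definitions and observing that the bijectivity of $\beta \mapsto G\beta \cdot \iota A$ for all spans $(A,B)$ is simultaneously the definition of $G(-)\iota$ being an invertible operator and the definition of $(F,\iota)$ being an absolute left lifting. The only organisational difference is that the paper, in the direction from relative adjunction to absolute lifting, first treats the case $A = 1_{C_0}$ to establish the base lifting and then the general $A$ for absoluteness, whereas you handle both at once; also, you explicitly verify the operator axioms for $G(-)\iota$, which the paper instead remarks on after the proof as being automatic and hence not needed in the argument.
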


\begin{proof}
Let us start assuming that $F$ is an absolute left lifting of $I$ along $G$ with universal 2-cell $\iota\colon I\to GF$. We need to show that the operator $G(-)\iota\colon[F,1]\to[I,G]$ is invertible, i.e. for any span $(A,B)\colon O\rightarrow C_0;D$,
$$\catK[O,\,D](FA,\,B)\xrightarrow{\sim} \catK[O,\,C](IA,\,GB).$$
First, let us notice that, since $F$ is an absolute left lifting, $(FA,\iota A)$ is a left lifting of $IA$ along $G$. Therefore, for any 2-cell $\beta\colon IA\to GB$ be a 2-cell, there exists a unique $\alpha\colon FA\to B$ such that $G\alpha\cdot\iota A=\beta$. Hence, $G(-)\iota$ is invertible. 

On the other hand, if we have an invertible operator $G(-)\iota\colon[F,1]\to[I,G]$ we can prove that $\iota$ provides $F$ as an absolute left lifting of $I$ along $G$.
\begin{itemize}
\item First, let us prove that $F$ is a left lifting. For any 2-cell $\beta$ of the form
\begin{center}
\begin{tikzcd}[ampersand replacement=\&]
	\& D \\
	{C_0} \& C
	\arrow[""{name=0, anchor=center, inner sep=0}, "G", from=1-2, to=2-2]
	\arrow[""{name=1, anchor=center, inner sep=0}, "I"', from=2-1, to=2-2]
	\arrow["B", dashed, from=2-1, to=1-2]
	\arrow["\beta", shorten <=7pt, shorten >=7pt, Rightarrow, from=1, to=0]
\end{tikzcd}
\hspace{0.5cm}$=$\hspace{0.5cm}
\begin{tikzcd}[ampersand replacement=\&]
	{C_0} \& D \\
	{C_0} \& C,
	\arrow[""{name=0, anchor=center, inner sep=0}, "{1_{C_0}}"', Rightarrow, dashed, no head, from=1-1, to=2-1]
	\arrow["B", dashed, from=1-1, to=1-2]
	\arrow[""{name=1, anchor=center, inner sep=0}, "G", from=1-2, to=2-2]
	\arrow["I"', from=2-1, to=2-2]
	\arrow["\beta", shorten <=13pt, shorten >=13pt, Rightarrow, from=0, to=1]
\end{tikzcd}
\end{center}
since $[G(-)\iota]_{1_{C_0},B}$ is invertible, there exists a unique $\alpha\colon F1_{C_0}=F\to B$ such that $G(\alpha)\iota=\beta$, which is the \yellow required \black lifting property. 

\item Second, we need to prove that this lifting is absolute, i.e. that for any 1-cell $A\colon O\to C_0$, then $FA$ is a left lifting of $IA$ along $G$. This again, follows by the isomorphism
\[G(-)\iota\colon\catK[O,\,D](FA,\,B)\xrightarrow{\sim} \catK[O,\,C](IA,\,GB).\hfill\qedhere\]
\end{itemize}
\end{proof}
\black 

Let us underline the fact that we have not used the naturality axioms for an operator in the proof of the proposition above. This is because we were considering $G(-)\iota$ which is always an operator. 

\begin{rmk}
\label{rmk:weber-fully-faith}
Weber in \cite[Example~2.18]{web:yon-str} defines a fully faithful 1-cell $F\colon X\to Y$ in a 2-category $\catK$ as a 1-cell such that the identity cell
\[\begin{tikzcd}
	X & X \\
	Y
	\arrow["{1_X}", from=1-1, to=1-2]
	\arrow[""{name=0, anchor=center, inner sep=0}, "F"', from=1-1, to=2-1]
	\arrow[""{name=1, anchor=center, inner sep=0}, "F", from=1-2, to=2-1]
	\arrow[shorten <=5pt, shorten >=5pt, Rightarrow, no head, from=0, to=1]
\end{tikzcd}\]
exhibits $1_X$ as an absolute left lifting of $F$ along itself. Using \Cref{prop:rel-adj-abs-lift}, this is equivalent to $(1_X)_F\dashv F$, i.e. \yellow $F$ induces \black an invertible operator $[1_X,1_X]\cong[F,F]$. 
\end{rmk}

The next proposition gives a characterisation of operators in 2-categories with comma objects. For the comma object of $F$ and $G$ we will use the following notation:
\[\begin{tikzcd}
	{F/G} & Y \\
	X & Z.
	\arrow["{p_Y}", from=1-1, to=1-2]
	\arrow[""{name=0, anchor=center, inner sep=0}, "{p_X}"', from=1-1, to=2-1]
	\arrow["F"', from=2-1, to=2-2]
	\arrow[""{name=1, anchor=center, inner sep=0}, "G", from=1-2, to=2-2]
	\arrow["\rho", shorten <=13pt, shorten >=13pt, Rightarrow, from=0, to=1]
\end{tikzcd}\]

\begin{prop}
\label{prop:op-w-comma}
Let $\catK$ be a 2-category. Given two cospan 
\begin{center}
\begin{tikzpicture}
\node (T) at (1,0) {$Z$};
\node (T') at (2,1) {$Y$};
\node (ST') at (0,1) {$X$};
\path[->] 
(ST') edge node[scale=.7] [left, xshift=-0.2cm, yshift=-0.1cm] (A) {$F$} (T)
(T')	edge node[scale=.7] [right, xshift=0.2cm, yshift=-0.1cm] {$G$} (T);

\node (T') at (6,1) {$Y$};
\node (ST') at (4,1) {$X$};
\node (TS0') at (5,0) {$Z'$};
\path[->] 
(ST') edge node[scale=.7] [left, xshift=-0.2cm, yshift=-0.1cm] {$F'$} (TS0')
(T') edge node[scale=.7] [right, xshift=0.2cm, yshift=-0.1cm] (S) {$G'$} (TS0');
\end{tikzpicture}
\end{center}
if the comma objects $F/G$ and $F'/G'$ exist, then operators $(-)^\#\colon[F,G]\to[F',G']$ are equivalent to 1-cells $H\colon F/G\to F'/G'$ such that 
\begin{equation}
\label{eq:iso-op-w-comma}
\begin{tikzcd}[ampersand replacement=\&]
	{F/G} \& {F'/G'} \& {\textrm{and}} \& {F/G} \\
	\& X \&\& {F'/G'} \& Y.
	\arrow["{p'_X}", from=1-2, to=2-2]
	\arrow["H", from=1-1, to=1-2]
	\arrow[""{name=0, anchor=center, inner sep=0}, "{p_X}"', curve={height=12pt}, from=1-1, to=2-2]
	\arrow["H"', from=1-4, to=2-4]
	\arrow["{p'_Y}"', from=2-4, to=2-5]
	\arrow[""{name=1, anchor=center, inner sep=0}, "{p_Y}", curve={height=-12pt}, from=1-4, to=2-5]
	\arrow["\cong"{description}, Rightarrow, draw=none, from=0, to=1-2]
	\arrow["\cong"{description}, Rightarrow, draw=none, from=1, to=2-4]
\end{tikzcd}
\end{equation}
\end{prop}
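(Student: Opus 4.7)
The plan is to exploit the one-dimensional universal property of the comma object, which identifies 1-cells $O \to F/G$ with triples $(A\colon O\to X,\,B\colon O\to Y,\,f\colon FA\Rightarrow GB)$: precisely the input data of an operator at the span $(A,B)$. Under this identification, supplying an operator is the same as assigning, naturally in that data, a 1-cell $O\to F'/G'$ over the same legs $A,B$; and by Yoneda-style reasoning these assignments are in turn classified by a single 1-cell $F/G \to F'/G'$ compatible with the projections.

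In one direction, starting from an operator $(-)^\sharp$, I would apply it to the universal 2-cell $\rho\colon F p_X \Rightarrow G p_Y$ at the span $(p_X,p_Y)$ to obtain $\rho^\sharp\colon F' p_X \Rightarrow G' p_Y$. The one-dimensional universal property of $F'/G'$ then produces a unique 1-cell $H\colon F/G \to F'/G'$ with $p'_X H \cong p_X$, $p'_Y H \cong p_Y$, and $\rho' H$ equal (modulo these isos) to $\rho^\sharp$. In the other direction, given $H$ satisfying (\ref{eq:iso-op-w-comma}), for any 2-cell $f\colon FA \Rightarrow GB$ I would take the unique $K_f\colon O \to F/G$ classifying $(A,B,f)$ and define $f^\sharp$ to be the 2-cell obtained from $\rho' H K_f$ after pasting with the triangle isomorphisms.

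The three operator axioms would then be verified as follows. Indexing naturality $(fP)^\sharp = f^\sharp P$ is immediate from $K_{fP} = K_f P$ and the associativity of composition $H(K_f P) = (HK_f)P$. Left and right naturality use the \emph{two-dimensional} universal property of the comma object: a 2-cell $\alpha\colon A' \Rightarrow A$ together with $1_B$ satisfies the compatibility $G(1_B)\cdot f = (f \cdot F\alpha) \cdot F\alpha^{-1}\cdot F\alpha$, inducing a unique 2-cell $\sigma\colon K_{f\cdot F\alpha} \Rightarrow K_f$ in $\catK[O,F/G]$; whiskering with $H$ and re-applying the two-dimensional property at $F'/G'$ to $H\sigma$ extracts precisely the identity $(f\cdot F\alpha)^\sharp = f^\sharp \cdot F'\alpha$. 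A symmetric argument with $\alpha = 1_A$ and a 2-cell $\beta\colon B \Rightarrow B'$ yields right naturality.

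Finally, I would check that the two constructions are mutually inverse. Passing from an operator to $H$ and back recovers the operator at $f$ as $\rho' H K_f = \rho^\sharp K_f = (\rho K_f)^\sharp = f^\sharp$, where the last equality is indexing naturality applied to $K_f\colon O \to F/G$. Passing from $H$ to an operator and back computes the classifying datum as $\rho' H = \rho^\sharp$, so the uniqueness clause in the universal property of $F'/G'$ forces the reconstructed 1-cell to agree with $H$. The main technical hurdle is the careful bookkeeping of the triangle isomorphisms in (\ref{eq:iso-op-w-comma}) and the invocation of the two-dimensional universal property of comma objects; once those are handled, the three operator axioms and the inversion identities drop out formally.
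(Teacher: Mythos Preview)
Your proposal is correct and follows essentially the same approach as the paper: apply the operator to the universal 2-cell $\rho$ to obtain $H$, and conversely classify $f$ by $K_f$ and paste with $\rho' H$; the three naturality axioms are verified exactly as you describe, via the one- and two-dimensional universal properties of the comma objects. You actually go slightly further than the paper by sketching the mutual-inverse check (which the paper omits); note only that your displayed compatibility equation in the left-naturality paragraph is garbled and should simply read that $(\alpha,1_B)$ satisfies $\rho K_f \cdot F\alpha = G1_B \cdot \rho K_{f\cdot F\alpha}$ modulo the projection isomorphisms.
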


\begin{proof}
Let us start with an operator $(-)^\#\colon[F,G]\to[F',G']$. Setting $O:=F/G$, $A:=p_X$ and $B:=p_Y$, we can apply the operator $(-)^\#$ to the 2-cell $\rho$.
\begin{center}
\begin{tikzpicture}
\node (T) at (0,2) {$F/G$};
\node (T') at (2,2) {$Y$};
\node (ST') at (0,0) {$X$};
\node (TS0') at (2,0) {$Z$};
\path[->] 
(ST') edge node[scale=.7] [below] {$F$} (TS0')
(T) edge[dotted] node[scale=.7] [left] (A) {$p_X$} (ST')
	edge[dotted] node[scale=.7] [above] {$p_Y$} (T')
(T') edge node[scale=.7] [right] (S) {$G$} (TS0');
\draw[-{Implies},double distance=1.5pt,shorten >=20pt,shorten <=20pt] (A) to node[scale=.7] [above, yshift=0.1cm] {$\rho$} (S);

\node (T) at (5,2) {$F/G$};
\node (T') at (7,2) {$Y$};
\node (ST') at (5,0) {$X$};
\node (TS0') at (7,0) {$Z'$};
\path[->] 
(ST') edge node[scale=.7] [below] {$F'$} (TS0')
(T) edge[dotted] node[scale=.7] [left] (A') {$p_X$} (ST')
	edge[dotted] node[scale=.7] [above] {$p_Y$} (T')
(T') edge node[scale=.7] [right] (U) {$G'$} (TS0');
\draw[-{Implies},double distance=1.5pt,shorten >=20pt,shorten <=20pt] (A') to node[scale=.7] [above, yshift=0.1cm] {$\rho^\#$} (U);

\node (a) at (3,1) {};
\node (b) at (4,1) {};
\draw[->,decorate,decoration=snake] (a) to node[scale=.7] [above,yshift=0.2cm] {$(-)^\#_{p_X,p_Y}$} (b);
\end{tikzpicture}
\end{center}
Then, by the universal property of the comma object $F'/G'$, there exists a unique $H^\#\colon F/G\to F'/G'$ such that
\[\begin{tikzcd}
	{F/G} &&&& {F/G} \\
	&& Y & {=} && {F'/G'} & Y \\
	& X & {Z'} &&& X & {Z'}
	\arrow["{G'}", from=2-3, to=3-3]
	\arrow["{F'}"', from=3-2, to=3-3]
	\arrow["{p_Y}", curve={height=-12pt}, from=1-1, to=2-3]
	\arrow[""{name=0, anchor=center, inner sep=0}, "{p_X}"', curve={height=12pt}, from=1-1, to=3-2]
	\arrow[""{name=1, anchor=center, inner sep=0}, "{G'}", from=2-7, to=3-7]
	\arrow["{F'}"', from=3-6, to=3-7]
	\arrow["{p'_Y}", from=2-6, to=2-7]
	\arrow[""{name=2, anchor=center, inner sep=0}, "{p'_X}"', from=2-6, to=3-6]
	\arrow[""{name=3, anchor=center, inner sep=0}, "{p_Y}", curve={height=-12pt}, from=1-5, to=2-7]
	\arrow[""{name=4, anchor=center, inner sep=0}, "{p_X}"', curve={height=12pt}, from=1-5, to=3-6]
	\arrow["{\exists!H^\#}"{description}, from=1-5, to=2-6]
	\arrow["{\rho^\#}", shorten <=20pt, shorten >=20pt, Rightarrow, from=0, to=2-3]
	\arrow["\cong"{description}, Rightarrow, draw=none, from=4, to=2-6]
	\arrow["\cong"{description}, Rightarrow, draw=none, from=2-6, to=3]
	\arrow["{\rho'}", shorten <=14pt, shorten >=14pt, Rightarrow, from=2, to=1]
\end{tikzcd}\]
Conversely, we now show how to construct an operator $(-)^H\colon [F,G]\to[F',G']$ given a 1-cell $H\colon F/G\to F'/G'$ with isomorphisms as in (\ref{eq:iso-op-w-comma}). Let $(A,B)\colon O\to X;Y$ be a span and $f\colon FA\Rightarrow GB$ a 2-cell in $\catK$. By the universal property of the comma object $F/G$, there exists a unique $T^f\colon O\to F/G$ such that
\[\begin{tikzcd}
	O &&&& O \\
	&& Y & {=} && {F/G} & Y \\
	& X & Z &&& X & Z.
	\arrow["G", from=2-3, to=3-3]
	\arrow["F"', from=3-2, to=3-3]
	\arrow["B", curve={height=-12pt}, from=1-1, to=2-3]
	\arrow[""{name=0, anchor=center, inner sep=0}, "A"', curve={height=12pt}, from=1-1, to=3-2]
	\arrow[""{name=1, anchor=center, inner sep=0}, "G", from=2-7, to=3-7]
	\arrow["F"', from=3-6, to=3-7]
	\arrow["{p_Y}", from=2-6, to=2-7]
	\arrow[""{name=2, anchor=center, inner sep=0}, "{p_X}"', from=2-6, to=3-6]
	\arrow[""{name=3, anchor=center, inner sep=0}, "B", curve={height=-12pt}, from=1-5, to=2-7]
	\arrow[""{name=4, anchor=center, inner sep=0}, "A"', curve={height=12pt}, from=1-5, to=3-6]
	\arrow["{\exists!T^f}"{description}, from=1-5, to=2-6]
	\arrow["f", shorten <=20pt, shorten >=20pt, Rightarrow, from=0, to=2-3]
	\arrow["\cong"{description}, Rightarrow, draw=none, from=4, to=2-6]
	\arrow["\cong"{description}, Rightarrow, draw=none, from=2-6, to=3]
	\arrow["\rho", shorten <=13pt, shorten >=13pt, Rightarrow, from=2, to=1]
\end{tikzcd}\]
Then, we define the 2-cell $f^H$ as the following pasting
\[\begin{tikzcd}
	O \\
	& {F/G} \\
	&& {F'/G'} & Y \\
	&& X & {Z'.}
	\arrow[""{name=0, anchor=center, inner sep=0}, "{G'}", from=3-4, to=4-4]
	\arrow["{F'}"', from=4-3, to=4-4]
	\arrow["{p'_Y}", from=3-3, to=3-4]
	\arrow[""{name=1, anchor=center, inner sep=0}, "{p'_X}"', from=3-3, to=4-3]
	\arrow[""{name=2, anchor=center, inner sep=0}, "{p_Y}"{description}, curve={height=-12pt}, from=2-2, to=3-4]
	\arrow[""{name=3, anchor=center, inner sep=0}, "{p_X}"{description}, curve={height=12pt}, from=2-2, to=4-3]
	\arrow["H"{description}, from=2-2, to=3-3]
	\arrow["{T^f}"{description}, from=1-1, to=2-2]
	\arrow[""{name=4, anchor=center, inner sep=0}, "B", curve={height=-24pt}, from=1-1, to=3-4]
	\arrow[""{name=5, anchor=center, inner sep=0}, "A"', curve={height=24pt}, from=1-1, to=4-3]
	\arrow["\cong"{description}, Rightarrow, draw=none, from=3, to=3-3]
	\arrow["\cong"{description}, Rightarrow, draw=none, from=3-3, to=2]
	\arrow["{\rho'}", shorten <=14pt, shorten >=14pt, Rightarrow, from=1, to=0]
	\arrow["\cong"{description}, Rightarrow, draw=none, from=2-2, to=4]
	\arrow["\cong"{description}, Rightarrow, draw=none, from=5, to=2-2]
\end{tikzcd}\]
Now we need to prove that $(-)^H$ satisfy all of the three naturality axioms for an operator.
\begin{itemize}
\item \textbf{Indexing naturality:} Given a 1-cell $P\colon O'\to O$, we need to prove that $f^HP=(fP)^H$. This is true because both these maps correspond to the 1-cell $T^fQ$ through the (1-dimensional) universal property of the comma object $F'/G'$. 

\item \textbf{Left naturality:} Given $\alpha\colon A'\to A$, we need to prove that $(f\cdot F\alpha)^H=f^H\cdot F'\alpha$. Let us denote with $T^f$ and $T^{f\cdot F\alpha}$ the 1-cells $O\to F/G$ corresponding to $f$ and $f\cdot F\alpha$, respectively. Then, one can check that we can construct a 2-cell $\overline{\alpha}\colon T^{f\cdot F\alpha}\to T^f$ corresponding to 
\[\begin{tikzcd}
	O &&&&& O & {F/G} \\
	\\
	{F/G} && {F/G} & Y & {=} && {F/G} & Y \\
	&& X & Z &&& X & Z.
	\arrow[""{name=0, anchor=center, inner sep=0}, "G", from=3-4, to=4-4]
	\arrow["F"', from=4-3, to=4-4]
	\arrow[""{name=1, anchor=center, inner sep=0}, "G", from=3-8, to=4-8]
	\arrow["F"', from=4-7, to=4-8]
	\arrow["{p_Y}"{description}, from=3-7, to=3-8]
	\arrow[""{name=2, anchor=center, inner sep=0}, "{p_X}"', from=3-7, to=4-7]
	\arrow[""{name=3, anchor=center, inner sep=0}, "B"{description}, from=1-6, to=3-8]
	\arrow[""{name=4, anchor=center, inner sep=0}, "{T^{f\cdot F\alpha}}"', from=1-6, to=3-7]
	\arrow["{T^f}", from=1-6, to=1-7]
	\arrow[""{name=5, anchor=center, inner sep=0}, "{p_Y}", from=1-7, to=3-8]
	\arrow[""{name=6, anchor=center, inner sep=0}, "{T^f}", curve={height=-12pt}, from=1-1, to=3-3]
	\arrow["{p_Y}", from=3-3, to=3-4]
	\arrow[""{name=7, anchor=center, inner sep=0}, "{p_X}"{description}, from=3-3, to=4-3]
	\arrow["{T^{f\cdot F\alpha}}"', from=1-1, to=3-1]
	\arrow["{p_X}"', curve={height=6pt}, from=3-1, to=4-3]
	\arrow[""{name=8, anchor=center, inner sep=0}, "{A'}"{description}, curve={height=18pt}, from=1-1, to=4-3]
	\arrow[""{name=9, anchor=center, inner sep=0}, "A"{description}, curve={height=-12pt}, from=1-1, to=4-3]
	\arrow["\rho", shorten <=13pt, shorten >=13pt, Rightarrow, from=2, to=1]
	\arrow["\cong"{description}, Rightarrow, draw=none, from=4, to=3]
	\arrow["\cong"{description}, Rightarrow, draw=none, from=3, to=5]
	\arrow["\rho", shorten <=13pt, shorten >=13pt, Rightarrow, from=7, to=0]
	\arrow["\cong"', Rightarrow, draw=none, from=3-1, to=8]
	\arrow["\cong"{description}, Rightarrow, draw=none, from=9, to=6]
	\arrow["\alpha", shorten <=8pt, shorten >=8pt, Rightarrow, from=8, to=9]
\end{tikzcd}\]
Therefore, $H\overline{\alpha}\colon HT^{f\cdot F\alpha}\to HT^f$ is a 2-cell between 1-cell with codomain $F'/G'$. Finally, using the isomorphism $p_X\overline{\alpha}\cong p'_XH\overline{\alpha}$, the 2-dimensional universal property of $F'/G'$ shows that the 2-cells $(f\cdot F\alpha)^H$ and $f^H\cdot F'\alpha$ are equal.

\item \textbf{Right naturality:} This is analogous to left naturality. \qedhere
\end{itemize}
\end{proof}

In $\Cat$, this proposition shows how families of maps $\catZ(Fx,Gy)\rightarrow \catZ'(F'x,G'y)$ natural in $x$ and $y$, correspond to functors $(F\downarrow G)\to(F'\downarrow G')$ between the comma categories. Let $\Int(\catE)$ be the 2-category of internal categories in a category with pullbacks $\catE$. It is known that $\Int(\catE)$ has comma objects, for example it follows from \cite[Proposition~3.19]{Bourke:phd} using the construction of comma objects via cotensors with $\mathbf{2}$ and pullbacks given in \cite{Street:fib-yon}. Some important examples of internal categories are double categories and (small) strict monoidal categories, where we take $\catE$ equal to $\Cat$ and to $\Mon$, the category of monoids, respectively. Hence, \Cref{prop:op-w-comma} gives a recipe to interpret operators also in these 2-categories. 

%
%
%
%

In \cite[Section B.I.2]{Law:phd} Lawvere gives a definition of adjoints using comma categories, which is equivalent to the classic one (see \cite[Theorem B.I.2.1]{Law:phd}). Putting together Propositions \ref{prop:rel-adj-abs-lift} and \ref{prop:op-w-comma} we get a similar description for relative adjunctions in our setting. 

\begin{prop}
Let $\catK$ be a 2-category with comma objects and 
\[\begin{tikzcd}
	& D \\
	{C_0} && C
	\arrow["L", from=2-1, to=1-2]
	\arrow["R", from=1-2, to=2-3]
	\arrow["J"', from=2-1, to=2-3]
\end{tikzcd}\]
a diagram in $\catK$. Then, $L_J\dashv R$ if and only if there exists an invertible 1-cell $L/1_D\xrightarrow{\sim}$~$J/R$ satisfying the equations (\ref{eq:iso-op-w-comma}). 
\end{prop}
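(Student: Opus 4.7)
The plan is to deduce this from Proposition~\ref{prop:op-w-comma}. By definition, $L\,{}_J\!\dashv R$ amounts to the datum of a 2-cell $\iota\colon J\to RL$ such that the operator $R(-)\iota\colon[L,1_D]\to[J,R]$ is invertible. Since $\catK$ has comma objects, Proposition~\ref{prop:op-w-comma} provides a bijection between operators $[L,1_D]\to[J,R]$ and 1-cells $L/1_D\to J/R$ satisfying (\ref{eq:iso-op-w-comma}). Two points remain to establish: first, that this bijection is compatible with identities and composition, so that an operator is invertible precisely when the corresponding 1-cell is; and second, that every operator $\Theta\colon[L,1_D]\to[J,R]$ is of the form $R(-)\iota$ for some 2-cell $\iota\colon J\to RL$.

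For the first point, inspecting the construction in the proof of Proposition~\ref{prop:op-w-comma}, the 1-cell $H^\sharp\colon F/G\to F'/G'$ associated to an operator $(-)^\sharp$ is characterised uniquely by the 1-dimensional universal property of $F'/G'$ applied to the image $\rho^\sharp$ of the universal 2-cell $\rho$. The uniqueness clause forces the identity operator on $[F,G]$ to correspond to $1_{F/G}$, and composable operators to correspond to the composite of the associated 1-cells. Consequently, invertibility is transported in both directions.

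For the second point, given an operator $\Theta\colon[L,1_D]\to[J,R]$, set $\iota:=\Theta(1_L)\colon J\to RL$, obtained by applying $\Theta$ to the identity 2-cell $1_L\colon L\to L$ indexed by the span $(1_{C_0},L)\colon C_0\to C_0;D$. For a general 2-cell $f\colon LA\to B$ over a span $(A,B)\colon O\to C_0;D$, indexing naturality applied to $A\colon O\to C_0$ yields $\Theta(1_{LA})=\iota A$, and right naturality applied to $\beta:=f\colon LA\to B$ then gives $\Theta(f)=Rf\cdot\iota A$, so $\Theta=R(-)\iota$. Putting the two points together, an invertible 1-cell $H\colon L/1_D\xrightarrow{\sim}J/R$ satisfying (\ref{eq:iso-op-w-comma}) is the same data as an invertible operator of the form $R(-)\iota$, which is what the definition of $L\,{}_J\!\dashv R$ demands. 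The main obstacle is tracking the functoriality of the Proposition~\ref{prop:op-w-comma} correspondence through the compatibility isomorphisms (\ref{eq:iso-op-w-comma}); this is conceptually clear from the uniqueness of universal maps but needs careful bookkeeping.
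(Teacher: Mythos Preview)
Your proof is correct and follows the paper's approach of instantiating Proposition~\ref{prop:op-w-comma} at $F=L$, $G=1_D$, $F'=J$, $G'=R$; the paper's proof is literally that one-line substitution. You have been more thorough than the paper by making explicit the two points it leaves implicit---functoriality of the correspondence (so that invertibility transfers) and the fact that every operator $[L,1_D]\to[J,R]$ has the form $R(-)\iota$---and both verifications are sound.
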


\begin{proof}
Set $F=L$, $G=1$, $F'=I$ and $G'=R$ in \Cref{prop:op-w-comma}. 
\end{proof}

\begin{rmk} \label{rmk:pst-op}
Let us state three properties that will be useful to prove that any relative adjunction induces a relative monad (Lemma~\ref{lemma:rel-adj-to-rel-mnd}). 
\begin{itemize}
\item We can easily see that given two operators $[F_1,\,G_1]\rightarrow[F_2,\,G_2]$ and $[F_2,\,G_2]\rightarrow[F_3,\,G_3]$ we can construct a composition operator $[F_1,\,G_1]\rightarrow[F_3,\,G_3]$ composing component-wise.

\item An example of an operator is, for any 1-cell $T\colon Z\rightarrow Z'$ in $\catK$, $T(-)\colon [F,\,G]\rightarrow[TF,\,TG]$ defined as $Tf\colon TFA\to TGB$ for any $f\colon FA\to GB$. Indexing, left and right naturality in this case derive from the unique interpretation of a pasting diagram in a 2-category. 

\item Let $(-)^\#\colon[F,\,G]\to[F',\,G']$ be an operator between the cospans $(F,G)\colon X;Y\rightarrow Z$ and $(F',G')\colon X;Y\rightarrow Z'$. Then, given any 1-cells $F_0\colon X_0\to X$ and $G_0\colon Y_0\to Y$, we can construct a new operator 
$$(-)^\#\cdot(F_0,G_0)\colon[FF_0,\,GG_0]\to[F'F_0,\,G'G_0].$$
For any span $(A,B)\colon O\to X_0;Y_0$ we define the action of $(-)^\#_{F_0,G_0}$ on a 2-cell $f\colon FF_0A\to GG_0B$ as
$$[(f)^\#\cdot(F_0,G_0)]_{A,B}:=(f)^\#_{F_0A,G_0B}.$$
All three naturality axioms for $(-)^\#\cdot(F_0,G_0)$ hold since they are particular cases of the ones of $(-)^\#$. 
\end{itemize}  
\end{rmk}

We will use the following proposition in Section~\ref{sec:rel-in-K} to prove that any relative adjunction induces a relative monad (Lemma~\ref{lemma:rel-adj-to-rel-mnd}). 

\begin{prop}
\label{prop:pst-op}
Let $(-)^\#\colon [F,\,G]\rightarrow[F',\,G']$ be an operator such that each $(-)^\#_{A,B}$ is an isomorphism. Then the family of functions sending any $g\in\catK[O,Z'](F'A,\,G'B)$ to the unique $f$ such that $f^\#_{A,B}=g$ forms an operator $(-)^\flat\colon [F',\,G']\rightarrow[F,\,G]$. 
\end{prop}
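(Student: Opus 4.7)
The strategy is very direct: define $(-)^\flat_{A,B}$ pointwise as the set-theoretic inverse of $(-)^\#_{A,B}$ (which exists by hypothesis) and then verify the three naturality axioms by applying the already-established naturality of $(-)^\#$ together with the injectivity of $(-)^\#_{A,B}$.

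First I would fix notation: for any span $(A,B)\colon O\to X;Y$ and any $g\colon F'A\to G'B$, let $g^\flat$ denote the unique 2-cell $F A\to GB$ with $(g^\flat)^\# = g$. Equivalently, $(-)^\flat_{A,B} = [(-)^\#_{A,B}]^{-1}$, so by construction $(g^\flat)^\# = g$ and $(f^\#)^\flat = f$ for all $g$ and $f$ in the appropriate hom-sets.

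Next I would verify each axiom in turn. For \emph{indexing naturality}, given $P\colon O'\to O$, I apply $(-)^\#$ to $g^\flat P$ and use indexing naturality of $(-)^\#$ to get $(g^\flat P)^\# = (g^\flat)^\# P = g P$; since $(-)^\#_{AP,BP}$ is injective, this forces $(gP)^\flat = g^\flat P$. For \emph{left naturality}, given $\alpha\colon A'\to A$, compute $(g^\flat\cdot F\alpha)^\# = (g^\flat)^\#\cdot F'\alpha = g\cdot F'\alpha$ using left naturality of $(-)^\#$, and conclude $(g\cdot F'\alpha)^\flat = g^\flat\cdot F\alpha$ by injectivity of $(-)^\#_{A',B}$. \emph{Right naturality} is entirely symmetric: $(G\beta\cdot g^\flat)^\# = G'\beta\cdot (g^\flat)^\# = G'\beta\cdot g$ gives $(G'\beta\cdot g)^\flat = G\beta\cdot g^\flat$.

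There is essentially no obstacle here: the argument is a formal inversion, and the only thing one could get wrong is the bookkeeping of which indices (span, domain, codomain) the functions $(-)^\#_{A,B}$ and $(-)^\flat_{A,B}$ are labelled with. In particular, for the naturality checks to make sense one must note that both sides of each desired equation live in the same hom-set $\catK[O',Z](FAP,GBP)$, $\catK[O,Z](FA',GB)$, or $\catK[O,Z](FA,GB')$ respectively, so that the appeal to injectivity of the corresponding component of $(-)^\#$ is legitimate.
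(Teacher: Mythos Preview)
Your proof is correct and follows essentially the same approach as the paper: define $(-)^\flat_{A,B}$ as the pointwise inverse of $(-)^\#_{A,B}$, then verify each naturality axiom by applying $(-)^\#$ and invoking the corresponding naturality of $(-)^\#$ together with injectivity. In fact you spell out all three axioms explicitly, whereas the paper only writes out indexing naturality and remarks that the remaining two follow by the same reasoning.
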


\begin{proof}
Let us check all the axioms for $(-)^\flat$. For any $P\colon O'\rightarrow O$,
\begin{center}
$f^\flat P=(fP)^\flat\quad\Longleftrightarrow\quad (f^\flat P)^\#=((fP)^\flat)^\#$.
\end{center}
The second equation is true using the fact that $(-)^\flat$ is locally the inverse of $(-)^\#$ and indexing naturality for $(-)^\#$. With the same reasoning we can prove that $(-)^\flat$ satisfies also the other axioms. 
\end{proof}

\section{Relative Monads in $\catK$}
\label{sec:rel-in-K}

Using Definition\til\ref{defn:operator} we can define a relative monad in any 2-category $\catK$ as follows. 

\begin{defn}
\label{defn:rel-mnd}
A \emph{relative monad} $(X,\,I,\,S,\,(-)^\dagger_S,\,s)$ in $\catK$ consists of a pair of objects $X,\,X_0\in\catK$ together with:
\begin{itemize}
\item two 1-cells $I, S\colon X_0\rightarrow X$ (we say that \emph{$S$ is a relative monad on $I$});
\item an operator $(-)^\dagger_S\colon [I,\,S]\rightarrow[S,\,S]$ (the \emph{extension operator});
\item a 2-cell $s\colon I\rightarrow S$ (the \emph{unit});
\end{itemize}
satisfying the following axioms: 
\begin{itemize}
\item the \emph{left unit law}, i.e.\ for any $A,\,B\colon O\rightarrow X_0$ \\\begin{tikzpicture}
\node (IX) at (0,2) {$IA$};
\node (SX) at (2,2) {$SA$};
\node (SY) at (2,0) {$SB$;};
\draw[->] (IX) to node[scale=.7] [above]{$sA$} (SX);
\draw[->] (SX) to node[scale=.7] [right]{$k^\dagger$} (SY);
\draw[->] (IX) to [bend right] node[scale=.7] (G'f) [left] {$k$} (SY);
\end{tikzpicture}
\item the \emph{right unit law}, i.e.\ $s^\dagger=1_S$;
\item the \emph{associativity law}, i.e.\ for any 2-cells $k\colon IA\rightarrow SB$ and $l\colon IB\rightarrow SC$ \\ \begin{tikzpicture}
\node (IX) at (0,2) {$SA$};
\node (SX) at (2,2) {$SB$};
\node (SY) at (2,0) {$SC$.};
\draw[->] (IX) to node[scale=.7] [above]{$k^\dagger$} (SX);
\draw[->] (SX) to node[scale=.7] [right]{$l^\dagger$} (SY);
\draw[->] (IX) to [bend right] node[scale=.7] (G'f) [left, xshift=-0.2cm, yshift=-0.1cm] {$(l^\dagger\cdot k)^\dagger$} (SY);
\end{tikzpicture}
\end{itemize}
\end{defn}
With an abuse of notation, we will refer to a relative monad $(X,\,I,\,S,\,(-)^\dagger_S,\,s)$ in $\catK$ only with $(X,\,I,\,S)$.
\begin{eg}
\label{eg:rel-mnd} \hfill
\begin{enumerate}[(i)]
\item \label{rmk:0-cells} Let us consider relative monads in $\catK$ with $X_0=X$ and $I=1_X$.  Since operators with $I=1_X$ are pasting operators, we get back exactly no-iteration monads in~$\catK$ \cite[Theorem~2.4]{MarmWood:ext-syst}. 

\item Thanks to part (\ref{eg:op-cat}) of Example~\ref{eg:op}, setting $\catK=\Cat$ gives back exactly the definition of relative monad given in \cite{AltChap:mnd-no-end} and recalled in Definition\til\ref{defn:rel-mnd-cat} here.

\item Using part (\ref{eg:enr-oper}) of Example~\ref{eg:op} we can write more explicitly what is a relative monad $(\catX,\,I,\,S)$ in $\vcat$. Such an object consists of a pair of $\mathcal{V}$-categories $\catX$ and $\catX_0$ together with:
\begin{itemize}
\item two $\mathcal{V}$-functors $I, S\colon \catX_0\rightarrow\catX$;
\item a family of functions for any $a,b\in\catX_0$, $(-)^\dagger_S\colon \overline{\catX}(Ia,\,Sb)\rightarrow \overline{\catX}(Sa,\,Sb)$ natural in $a$ and $b$ (with $\overline{\catX}$ the underlying category of $\catX$);
\item a $\mathcal{V}$-natural transformation $s\colon I\rightarrow S$;
\end{itemize}
satisfying the left/right unital laws and associativity. 

We notice that this is not an \emph{enriched relative monad} \cite[Definition~4]{Stat:alg-pres}, which involves a natural family of maps in $\mathcal{V}$, for any $a,b\in \catX_0$, 
$$(-)_{a,b}^\#\colon\catX(Ia,Sb)\longrightarrow \catX(Sa,Sb).$$
Using the operators described in part (\ref{eq:enr-fam-op}) of \Cref{eg:op}, we can see that an enriched relative monads gives rise to a relative monad in $\vcat$. It would be interesting to investigate when the opposite is possible, i.e. when we can construct an enriched relative monad starting from a relative monad in $\vcat$. For instance, if the monoidal unit $I$ is a dense generator then $\mathcal{V}(I,-)$ is fully faithful and so natural families of maps $\catX(Ia,Sb)\to\catX(Sa,Sb)$ in $\mathcal{V}$ correspond to natural families of maps $\overline{\catX}(Ia,Sb)\to\overline{\catX}(Sa,Sb)$.

\item We might call relative monads in $\catK=\Int(\catE)$ \emph{internal relative monads}. In particular, for $\catE=\Cat$ we get \emph{double relative monads} and for $\catE=\Mon$ \emph{monoidal relative monads}.  

\item \yellow Let $\catK=\Pos$ be the 2-category with objects posets, 1-cells order-preserving maps and where there exists a 2-cell between two 1-cells $f$ and $g$ if and only if $f\leq g$. Then, a relative monad in $\Pos$ consists of two posets $X_0$ and $X$ together with
\begin{itemize}
\item an order preserving map $I\colon X_0\to X$;
\item for any $a\in X_0$, an element $Ta\in X$;
\end{itemize}
such that
$$Ta=\textrm{Min}\lbrace x\in X\mid \,\exists\,b\in X_0\,\textrm{such that}\,Tb=x\,\textrm{and}\,Ia\leq x\rbrace.$$ \black

\item \yellow Let $\catC$ be a category and let us consider $\catK=\tilde{\catC}$ the locally discrete 2-category associated to $\catC$. We recall that a monad in $\tilde{\catC}$ is just an object in $\catC$. Instead, a relative monad in $\tilde{\catC}$ is a morphism in $\catC$. \black


\end{enumerate}
\end{eg}

When we set $\catK=\Cat$ we know that any relative monad is induced by a relative adjunction \cite{AltChap:mnd-no-end}. It is natural to wonder if the same holds in any 2-category~$\catK$.  

\begin{lemma}
\label{lemma:rel-adj-to-rel-mnd}
A relative adjunction $F {\,}_I\!\dashv G$
induces a relative monad $(X,\,I,\,GF)$. 
\end{lemma}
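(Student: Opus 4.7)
The plan is to take the unit of the relative monad to be $\iota\colon I\to GF$ and to build the extension operator $(-)^\dagger\colon[I,GF]\to[GF,GF]$ out of the inverse of the defining isomorphism operator of the relative adjunction. Since $G(-)\iota\colon[F,1_D]\to[I,G]$ is componentwise invertible, Proposition~\ref{prop:pst-op} yields an operator $(-)^\flat\colon[I,G]\to[F,1_D]$ such that, for any span $(A,B)\colon O\rightarrow C_0;D$ and any $k\colon IA\to GB$, $k^\flat\colon FA\to B$ is the unique 2-cell with $G(k^\flat)\cdot\iota A=k$. Restricting along $(1_{C_0},F)$ via the third bullet of Remark~\ref{rmk:pst-op}, and post-composing with the operator $G(-)\colon[F,F]\to[GF,GF]$ given by the second bullet of the same remark, I define
$$k^\dagger \defeq G(k^\flat)$$
for every $k\colon IA\to GFB$. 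The fact that $(-)^\dagger$ really is an operator $[I,GF]\to[GF,GF]$ is immediate from the first bullet of Remark~\ref{rmk:pst-op}, since it is a composition of operators.

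Next I verify the three axioms of Definition~\ref{defn:rel-mnd}. For the right unit law, $\iota^\flat$ is characterized as the unique $\alpha\colon F\to F$ with $G\alpha\cdot\iota=\iota$; plainly $\alpha=1_F$ works, so $\iota^\dagger=G(1_F)=1_{GF}$. For the left unit law, given $k\colon IA\to GFB$ viewed along the span $(A,FB)\colon O\to C_0;D$, the defining property of $(-)^\flat$ yields immediately $k^\dagger\cdot\iota A=G(k^\flat)\cdot\iota A=k$.

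For associativity, let $k\colon IA\to GFB$ and $l\colon IB\to GFC$. I claim it suffices to show $(l^\dagger\cdot k)^\flat=l^\flat\cdot k^\flat$, since applying $G$ to both sides then gives
$$(l^\dagger\cdot k)^\dagger = G\bigl((l^\dagger\cdot k)^\flat\bigr) = G(l^\flat\cdot k^\flat) = G(l^\flat)\cdot G(k^\flat) = l^\dagger\cdot k^\dagger.$$
By the uniqueness half of Proposition~\ref{prop:pst-op}, the claimed equality reduces to checking the single identity $G(l^\flat\cdot k^\flat)\cdot\iota A=l^\dagger\cdot k$, and this follows from the already-proven left unit law:
$$G(l^\flat\cdot k^\flat)\cdot\iota A = G(l^\flat)\cdot G(k^\flat)\cdot\iota A = G(l^\flat)\cdot k = l^\dagger\cdot k.$$

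The main conceptual step is recognizing that the extension operator of the induced relative monad is precisely the inverse of the relative adjunction's defining operator post-composed with $G$; once this is in place, all three monad axioms reduce to bookkeeping with the uniqueness and inversion properties granted by Proposition~\ref{prop:pst-op} together with the three naturality/composition facts collected in Remark~\ref{rmk:pst-op}. No genuine obstacle arises beyond being careful with the domain cospans when restricting $(-)^\flat$ along $(1_{C_0},F)$.
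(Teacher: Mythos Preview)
Your proof is correct and follows essentially the same approach as the paper: both take $s:=\iota$, build $(-)^\dagger$ as $G(-)$ applied to the inverse operator $(-)^\flat$ (restricted along $(1_{C_0},F)$), and verify the three axioms via the same uniqueness/inversion argument, with associativity reduced to the identity $G(l^\flat\cdot k^\flat)\cdot\iota A = l^\dagger\cdot k$ by the left unit law. The only cosmetic difference is that the paper names the restricted operator $(-)^\#$ separately, whereas you overload $(-)^\flat$.
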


\begin{proof}
By Proposition~\ref{prop:pst-op} the operator $G(-)\iota\colon[F,\,1]\to[I,\,G]$ induces an operator $(-)^\flat\colon[I,\,G]\to[F,\,1]$. Then, by \Cref{rmk:pst-op} we get an operator
\[(-)^\#:=(-)^\flat\cdot(1,F)\colon [I,\,GF]\rightarrow[F,\,F].\] 
We define the extension operator $(-)^\dagger$ of $S=_\textrm{def}GF$ as $(-)^\#$ composed after with $G(-)$ (by Remark~\ref{rmk:pst-op} we get an operator of the required type). As unit we consider $s:=\iota$.  

The left unital law follows from the fact that $(-)^\#$ is the inverse of $G(-)\iota$, and therefore for any $f\colon IA\rightarrow SB$ we have $f=G(f^\#)\iota=f^\dagger\iota$. 

Moreover we can deduce also the right unital law, as
\begin{align*}
(s)^\dagger
&  = G(\,(s)^\#\,)
& (\textrm{by definition of}\;(-)^\dagger) 
\\
& = G(\,(G(1_{F})\iota)^\#\,)
& (\textrm{by definition of}\;s\;\textrm{and \yellow identity \black law in}\,\catK)
\\
& = G1_{F}=1_{GF}=1_{S}
& (\textrm{since}\;(-)^\#\,\textrm{inverse of}\;G(-)\iota). 
\end{align*}
We have left to check the associativity law. Given $f\colon IA\rightarrow SB$ and $g\colon IB\rightarrow SC$, we have
\begin{align*}
(g^\dagger f)^\dagger
&  = G(\,(G(g^\#)f)^\#\,)
& (\textrm{by definition of}\;(-)^\dagger) 
& \\
& = G(\,(G(g^\#)G(f^\#)\iota)^\#\,)
& (\textrm{by left unital law})
& \\
& = G(\,(G(g^\#f^\#)\iota)^\#\,)
& (\textrm{by strict functoriality of}\;G) 
& \\
& = G(g^\#f^\#) = G(g^\#)G(f^\#) =g^\dagger f^\dagger
& (\textrm{by left unital law}). 
& \qedhere
\end{align*} 
\end{proof}


\section{The 2-category of Relative Monads}
\label{sec:Rel(K)}

In this section, fixed a 2-category $\catK$, we will give the definition of the 2-category~$\Rel(\catK)$ of relative monads in $\catK$. 

\begin{defn}
\label{def:rel-mnd-morph}
Let $(X,\,I,\,S)$ and $(Y,\,J,\,T)$ be two relative monads in $\catK$. A \emph{relative monad morphism} $(F,\,F_0,\,\phi)\colon (X,\,I,\,S)\rightarrow(Y,\,J,\,T)$ consists of two 1-cells $F_0\colon X_0\rightarrow Y_0$ and $F\colon X\rightarrow Y$ and a 2-cell $\phi\colon FS\rightarrow TF_0$ satisfying the following axioms:
\begin{itemize}
\item $FI=JF_0$;
\item \emph{unit law}, i.e.\ the following diagram commutes \\
\begin{tikzpicture}
\node (IX) at (0,1) {$FI$};
\node (SX) at (2,2) {$FS$};
\node (SY) at (2,0) {$TF_0$;};
\draw[->] (IX) to node[scale=.7] [above, xshift=-0.1cm]{$Fs$} (SX);
\draw[->] (SX) to node[scale=.7] [right]{$\phi$} (SY);
\draw[->] (IX) to  node[scale=.7] (G'f) [left, yshift=-0.2cm, xshift=0.1cm] {$tF_0$} (SY);
\end{tikzpicture}
\item \emph{extension law}, i.e.\ for any 1-cells \yellow$A,\,B\colon O\rightarrow X_0$ \black and 2-cell $k\colon IA\rightarrow SB$ the following diagram commutes \\
\begin{tikzpicture}
\node (a) at (0,2) {$FSA$};
\node (b) at (2.5,2) {$TF_0A$};
\node (c) at (0,0) {$FSB$};
\node (d) at (2.5,0) {$TF_0B$.};
\draw[->] (a) to node[scale=.7] [above]{$\phi A$} (b);
\draw[->] (c) to node[scale=.7] [below]{$\phi B$} (d);
\draw[->] (a) to node[scale=.7] [left]{$F(k^\dagger_S)$} (c);
\draw[->] (b) to node[scale=.7] [right]{$(\phi B\cdot Fk)^\dagger_T$} (d);
\end{tikzpicture}
\end{itemize}
\end{defn}

\begin{rmk}
\label{rmk:1-cells}
A relative monad morphism $(F,\,F_0,\,\phi)\colon (X,\,I,\,S)\rightarrow(Y,\,J,\,T)$ between monads, i.e.\ when~$X_0=X$, $I=1_{X}$, $Y_0=Y$ and $J=1_{Y}$, is the same as a monad morphism~$(F,\,\phi)\colon (X,\,S)\rightarrow(Y,\,T)$ in $\Mnd(\catK^\op)^\op$. 
\end{rmk}

Two notions of morphisms of relative monads in $\Cat$ appear in \cite[Definition~2.2]{AltChap:mnd-no-end} and \cite[Definition~6]{Ahr:int-typ-synt}. In \cite{AltChap:mnd-no-end} they define a morphism of relative monads between relative monads on a common functor $\catC_0\to\catC$, which is the same as \Cref{def:rel-mnd-morph} (in $\Cat$) setting $I=J$ and $F_0=1_{C_0}$. Instead, in \cite{Ahr:int-typ-synt}, they generalise the definition of colax morphism of monads to relative monads. Our definition of relative monad morphism in $\catK=\Cat$ is a particular case of the one in \cite{Ahr:int-typ-synt} (with $N=1$). We choose to impose the equality $FI=JF_0$ in \Cref{def:rel-mnd-morph} to get exactly monad morphisms when we restrict to relative monad morphisms between monads (see \Cref{rmk:1-cells}).  

\begin{defn}
Let $(F,\,F_0,\,\phi),\,(F',\,F'_0,\,\phi')\colon (X,\,I,\,S)\rightarrow(Y,\,J,\,T)$ be two relative monad morphisms. A \emph{relative monad transformation} $(p,\,p_0)\colon (F,\,F_0,\,\phi)\rightarrow(F',\,F'_0,\,\phi')$ consists of two 2-cells $p\colon F\rightarrow F'$ and $p_0\colon F_0\rightarrow F'_0$ such that:
\begin{itemize}
\item $Jp_0=pI$;
\item the following diagram commutes \\
\begin{tikzpicture}
\node (IX) at (0,2) {$FS$};
\node (TF0) at (0,0) {$TF_0$};
\node (SX) at (2,2) {$F'S$};
\node (SY) at (2,0) {$TF'_0$.};
\draw[->] (IX) to node[scale=.7] [above]{$pS$} (SX);
\draw[->] (IX) to node[scale=.7] [left]{$\phi$} (TF0);
\draw[->] (SX) to node[scale=.7] [right]{$\phi'$} (SY);
\draw[->] (TF0) to  node[scale=.7] (G'f) [below] {$Tp_0$} (SY);
\end{tikzpicture}
\end{itemize}
\end{defn}

\begin{rmk}
\label{rmk:2-cells}
A relative monad transformation $(p,\,p_0)\colon (F,\,F_0,\,\phi)\rightarrow(F',\,F'_0,\,\phi')$ with $X_0=X$, $I=1_{X}$, $Y_0=Y$ and $J=1_{Y}$ is the same as a monad transformation of the form $p\colon (F,\,\phi)\rightarrow(F',\,\phi')$ in the sense of Street \cite{StreetR:fortm}. 
\end{rmk}

\begin{prop}
\label{prop:mnd-subcat-rel}
Let $\catK$ be a 2-category. There is a 2-category $\Rel(\catK)$ of relative monads in $\catK$ with relative monads, relative monad morphisms and relative monad transformations as 0-, 1- and 2-cells.
\qed
\end{prop}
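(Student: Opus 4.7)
The plan is to verify that the data described assembles into a 2-category by identifying identities, compositions, and checking the standard 2-category axioms, leveraging the fact that these are all built on top of the ambient 2-category $\catK$.

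First, I would write down the identity 1-cells and 2-cells and the composition operations. The identity relative monad morphism on $(X,I,S)$ is $(1_X,1_{X_0},1_S)$: the equation $1_X\cdot I=I\cdot 1_{X_0}$ is immediate, and the unit and extension laws reduce to trivial identities in $\catK$. For composition of 1-cells, given $(F,F_0,\phi)\colon(X,I,S)\to(Y,J,T)$ and $(G,G_0,\psi)\colon(Y,J,T)\to(Z,K,U)$, I would set the composite to be $(GF,G_0F_0,\psi F_0\cdot G\phi)$. The object-level equation $GFI=GJF_0=KG_0F_0$ follows by pasting the two compatibilities $FI=JF_0$ and $GJ=KG_0$. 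The unit law follows by horizontally applying $G$ to the unit law for $\phi$ and then pasting with the unit law for $\psi$. The extension law is where some care is needed: one uses the extension law for $\phi$ after whiskering by $G$, then applies the extension law for $\psi$ to the 2-cell $\phi B\cdot Fk\colon JF_0A\to TF_0B$, and finally uses indexing naturality of the operator $(-)^\dagger_U$ (from Definition~\ref{defn:operator}) to identify $(\psi F_0B\cdot G(\phi B\cdot Fk))^\dagger_U$ correctly; this is the only step that really uses the operator axioms rather than plain diagram chasing.

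Next I would address 2-cells. The identity 2-cell on $(F,F_0,\phi)$ is $(1_F,1_{F_0})$, and vertical composition of relative monad transformations is given by pairwise vertical composition in $\catK$. The two axioms $Jp_0=pI$ and the naturality square for $\phi$ are clearly preserved under vertical composition, since pasting commutative squares yields a commutative rectangle. For horizontal composition, given $(p,p_0)\colon(F,F_0,\phi)\Rightarrow(F',F'_0,\phi')$ and $(r,r_0)\colon(G,G_0,\psi)\Rightarrow(G',G'_0,\psi')$, I would define the horizontal composite as the pair of horizontal composites $(r\circ p, r_0\circ p_0)$ in $\catK$. The axiom $Kr_0\circ Jp_0=rJ\cdot Jp_0=(r\circ p)\cdot I$ follows by functoriality of horizontal composition together with the two individual axioms. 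The compatibility with $\phi$ and $\psi$ is obtained by pasting the square for $p$ whiskered by $G$ on one side with the square for $r$ whiskered appropriately on the other.

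Finally, I would argue that all 2-category axioms -- associativity and unitality of both compositions, and the middle-four interchange law -- are inherited from $\catK$, since each cell in $\Rel(\catK)$ is just a bundle of cells of $\catK$ with structural constraints, and each composition is defined componentwise. Thus one only has to check that the constraints are preserved, which has been done above. The main obstacle in this whole verification, as already indicated, is the extension law for composite 1-cells: there, unlike the other axioms which are pure diagram pasting in $\catK$, one must invoke the naturality properties of the extension operator $(-)^\dagger_U$ to match the two sides of the diagram. Once that step is discharged, the rest is routine and Proposition~\ref{prop:mnd-subcat-rel} follows.
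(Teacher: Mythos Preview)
Your proposal is correct and is precisely the routine verification the paper omits: note that Proposition~\ref{prop:mnd-subcat-rel} is stated with an immediate \qed, so there is no proof in the paper to compare against beyond the implicit claim that the check is straightforward. Your outline supplies exactly that check, and the one nontrivial point you single out---the extension law for a composite of relative monad morphisms, which requires applying the extension law for $\psi$ at the span $(F_0A,F_0B)$---is indeed the only place where anything beyond plain pasting in $\catK$ occurs.
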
 

Using part (\ref{rmk:0-cells}) of Example~\ref{eg:rel-mnd} and Remarks \ref{rmk:1-cells} and \ref{rmk:2-cells} we get the following proposition, which shows how our definition of $\Rel(\catK)$ extends Street's definition of $\Mnd(\catK)$ \cite{StreetR:fortm}, the 2-category of monads in a 2-category $\catK$. Before stating the proposition we recall the definition of \emph{full sub-2-category}. 

Let $\catK$ and $\catL$ be two 2-categories. A 2-functor $J\colon\catK\to\catL$ exhibits $\catK$ as a \emph{full sub-2-category} of $\catL$ if for all pair of objects $x,\,y\in\catK$ the functor $J_{x,y}\colon\catK(x,\,y)\to\catL(Jx,\,Jy)$ is an equivalence of categories.

\begin{prop}
$\Mnd(\catK^\op)^\op$ is a \yellow full \black sub-2-category of $\Rel(\catK)$ consisting of relative monads $(X,\,I,\,S)$ with $X_0=X$ and $I=1_X$. 
\end{prop}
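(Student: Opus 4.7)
The plan is to construct a 2-functor $J\colon\Mnd(\catK^{op})^{op}\to\Rel(\catK)$ and verify, using the identifications already set up earlier in the paper, that it is injective on objects with image the subcollection described, and that each local functor $J_{(X,S),(Y,T)}$ on hom-categories is an isomorphism (hence an equivalence).

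First I would define $J$ on objects using part (\ref{rmk:0-cells}) of Example~\ref{eg:rel-mnd}: a monad $(X,S)$ in $\catK$ (equivalently, an object of $\Mnd(\catK^{op})^{op}$) corresponds, via the Marmolejo--Wood translation into pasting operators, to a relative monad $(X,1_X,S,(-)^\dagger_S,s)$ in $\catK$. Concretely, the unit is the monad unit and the extension operator sends a 2-cell $k\colon A\to SB$ (a 2-cell of the form $1_XA\to SB$) to $\mu B\cdot Sk\colon SA\to SB$; the three operator axioms are straightforward whiskering/interchange computations, and the relative monad axioms reduce to the monad axioms for $S$. This defines $J$ on objects, with image precisely the relative monads of the form $(X,1_X,S)$.

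Next, on 1-cells and 2-cells $J$ is determined by Remarks~\ref{rmk:1-cells} and~\ref{rmk:2-cells}: a monad morphism $(F,\phi)\colon(X,S)\to(Y,T)$ in $\Mnd(\catK^{op})^{op}$, which is precisely a pair of a 1-cell $F$ and a 2-cell $\phi\colon FS\to TF$ satisfying the two classical monad-morphism axioms, is sent to the relative monad morphism $(F,F,\phi)$ with $F_0:=F$; a monad transformation $p\colon(F,\phi)\to(F',\phi')$ is sent to $(p,p)$ with $p_0:=p$. The equations $FI=JF_0$ (which becomes $F=F$) and $Jp_0=pI$ (which becomes $p=p$) hold trivially, and the unit and extension axioms for $(F,F,\phi)$, respectively the compatibility axiom for $(p,p)$, reduce exactly to the corresponding monad-morphism and monad-transformation axioms. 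Functoriality of $J$ on horizontal and vertical composition follows immediately because composition in $\Rel(\catK)$ is defined component-wise in the same way as in $\Mnd(\catK^{op})^{op}$, and identities clearly correspond.

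Finally I would verify that $J$ exhibits $\Mnd(\catK^{op})^{op}$ as a full locally full sub-2-category. For any two monads $(X,S),(Y,T)$, a relative monad morphism $(F,F_0,\phi)\colon (X,1_X,S)\to(Y,1_Y,T)$ satisfies $FI=JF_0$, which forces $F=F_0$; the remaining data is a 2-cell $\phi\colon FS\to TF$ and the two axioms of Definition~\ref{def:rel-mnd-morph} become precisely the unit and multiplication axioms for a monad morphism in $\Mnd(\catK^{op})^{op}$. Likewise, a relative monad transformation $(p,p_0)\colon(F,F,\phi)\to(F',F',\phi')$ between such morphisms has $p_0=p$ from $Jp_0=pI$, and its remaining axiom is exactly the compatibility axiom for a monad transformation in the sense of Street. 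Thus each local functor $J_{(X,S),(Y,T)}$ is an isomorphism of categories, which is in particular the equivalence required by the definition of full sub-2-category recalled just before the statement. The only subtlety to check carefully is the match between the extension operator $(-)^\dagger$ obtained from Example~\ref{eg:rel-mnd}(\ref{rmk:0-cells}) and the Kleisli extension induced by $\mu$, so that the relative-monad extension axiom in Definition~\ref{def:rel-mnd-morph} really does coincide on the nose with the Street monad-morphism axiom; this is a routine unpacking and is the only place where one has to write anything down.
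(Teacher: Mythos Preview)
Your proposal is correct and is exactly the approach indicated by the paper: the paper does not give a separate proof but simply states that the proposition follows from part~(\ref{rmk:0-cells}) of Example~\ref{eg:rel-mnd} together with Remarks~\ref{rmk:1-cells} and~\ref{rmk:2-cells}, and your argument is precisely the unpacking of those references into an explicit 2-functor with isomorphic hom-categories.
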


In Section~\ref{sec:rrm-kl-ext} we will describe an equivalent way to define morphisms of relative monads using a generalised version of right modules. We will then build a 2-category 2-isomorphic to $\Rel(\catK)$. 

\section{Relative Algebras}
\label{sec:rel-alg}

We now introduce the notion of an Eilenberg--Moore object for a relative monad, which we will refer to as \emph{relative EM object}. The approach used is the same as the one in \cite{StreetR:fortm}. The notion of \emph{relative} algebra for a relative monad has been already introduced in \cite{AltChap:mnd-no-end, MarmWood:ext-syst}, here we complete it using our definition of operator (Definition~\ref{defn:operator}). From now on we will consider a relative monad $(X,\,I,\,T)\in\Rel(\catK)$. 

\begin{defn}
Let $K\in\catK$. A \emph{$K$-indexed relative EM-algebra} (or \emph{relative left module}) consists of a 1-cell $M\colon K\rightarrow X$ together with an operator $(-)^m\colon [I,\,M]\rightarrow[T,\,M]$ satisfying the following axioms:
\begin{itemize}
\item \emph{unit law}, i.e.\ for any span $(A,\,B):O\rightarrow X_0;K$ and any 2-cell $h\colon IA\rightarrow MB$ the diagram below commutes\\
\begin{tikzpicture}
\node (IX) at (0,1.5) {$IA$};
\node (SX) at (1.5,1.5) {$TA$};
\node (SY) at (1.5,0) {$MB$;};
\draw[->] (IX) to node[scale=.7] [above]{$tA$} (SX);
\draw[->] (SX) to node[scale=.7] [right]{$h^m$} (SY);
\draw[->] (IX) to [bend right]  node[scale=.7] (G'f) [left, xshift=-0.1cm] {$h$} (SY);
\end{tikzpicture}

\item \emph{associativity law}, i.e.\ for any pair of spans $(A,\,B)\colon O\rightarrow X_0;K$ and \yellow $(C,\,A)\colon O\rightarrow X_0$ \black and any 2-cells $h\colon IA\rightarrow MB$ and $k\colon IC\rightarrow TA$, the diagram below commutes \\
\begin{tikzpicture}
\node (IX') at (0,1.5) {$TC$};
\node (SX') at (1.5,1.5) {$TA$};
\node (SY') at (1.5,0) {$MB$.};
\draw[->] (IX') to node[scale=.7] [above]{$k^\dagger$} (SX');
\draw[->] (SX') to node[scale=.7] [right]{$h^m$} (SY');
\draw[->] (IX') to [bend right]  node[scale=.7] (G'f) [left, xshift=-0.1cm] {$(h^m\cdot k)^m$} (SY');
\end{tikzpicture}
\end{itemize}
\end{defn}

\begin{defn}
Let $K\in\catK$ and let $(M,\,(-)^m)$ and $(N,\,(-)^n)$ be two $K$-indexed relative EM-algebras. A \emph{$K$-indexed relative EM-algebra morphism} $f\colon (M,\,(-)^m)\rightarrow (N,\,(-)^n)$ consists of a 2-cell $f\colon M\rightarrow N$ such that for any 2-cell $h\colon IA\rightarrow MB$ (given any span $(A,\,B)\colon O\rightarrow X_0;K$):
\begin{center}
\begin{tikzpicture}
\node (IX) at (0,2) {$TA$};
\node (SX) at (2,2) {$MB$};
\node (SY) at (2,0) {$NB$.};
\draw[->] (IX) to node[scale=.7] [above]{$h^m$} (SX);
\draw[->] (SX) to node[scale=.7] [right]{$fB$} (SY);
\draw[->] (IX) to [bend right]  node[scale=.7] (G'f) [left, xshift=-0.1cm] {$(fB\cdot h)^n$} (SY);
\end{tikzpicture}
\end{center}
\end{defn}

Clearly there is a category $\REM^T(K)$ of $K$-indexed relative algebras. Therefore we have an induced 2-functor 
\begin{center}
\begin{tikzpicture}
\node (catk) at (0,2.25) {$\catK^\op$};
\node (Cat) at (3,2.25) {$\Cat$};
\node (K') at (0,0) {$K'$};
\node (K) at (0,1.5) {$K$};
\node (ModK) at (3,1.5) {$\REM^T(K)$};
\node (ModK') at (3,0) {$\REM^T(K')$};
\node (MF) at (6.5,0) {$(\,MF,(-)_m\cdot(1,F)\,)$,};
\node (M) at (6.5,1.5) {$(\,M,(-)_m\,)$};
\draw[->] (catk) to node[scale=.9] [above]{$\REM^T(-)$} (Cat);
\draw[|->] (K) to (ModK);
\draw[|->] (K') to (ModK');
\draw[->] (K') to  node[scale=.9] [left]{$F$} (K);
\draw[->] (ModK) to  node[scale=.9] [right]{$-\circ F$} (ModK');
\draw[|->] (M) to (MF);
\end{tikzpicture}
\end{center}
where we recall that the notation $(-)_m\cdot (1,F)$ was defined in \Cref{rmk:pst-op}.
\begin{defn}
We say that a relative monad $(X,\,I,\,T)\in\Rel(\catK)$ has a \emph{relative EM object} if $\REM^T(-)\colon \catK^\op\rightarrow\Cat$ is representable. We will denote the representing object with $X^{I,T}$. 
\end{defn}
\newpage
\begin{eg}
\label{eg:EM-obj-cat}
Let $T\text{-}\mathrm{Alg}$ be the category of EM-algebras for a relative monad $(\catX,\,I,\,T)$ in $\Cat$ defined in \cite{AltChap:mnd-no-end}. We can see that this gives us a relative EM object for $T$. In order to prove it we just need to notice that, for any category~$\mathbb{K}$, a $\mathbb{K}$-indexed relative algebra $M\colon \mathbb{K}\rightarrow\catX$ is the same as endowing, for any~$k\in\mathbb{K}$, each $Mk\in\catX$ with a relative EM-algebra structure. Therefore each relative algebra $M$ induces a functor $\overline{M}\colon \mathbb{K}\rightarrow T\text{-}\mathrm{Alg}$. On the other hand if we start with a functor $\overline{M}$ then its composition with $U\colon T\text{-}\mathrm{Alg}\rightarrow\catX$ has a relative EM-algebra structure. These constructions are clearly inverses of each other and provide a natural isomorphism
\begin{center}
$\REM^T(\mathbb{K})\cong\Cat(\mathbb{K},\,T\text{-}\mathrm{Alg})$.
\end{center}
\end{eg}

For any $(X,\,I,\,T)\in\Rel(\catK)$, we can define~$U\colon \REM^T(-)\rightarrow\catK(-,\,X)$ the forgetful natural transformation, sending a relative \yellow left \black $T$-module to its underlying 1-cell, and the \emph{free relative algebra} transformation $F\colon \catK(-,\,X_0)\rightarrow\REM^T(-)$ defined as, for any indexing object $K\in\catK$, 
\begin{center}
$K\xrightarrow{M}X_0\;\longmapsto (\,TM,\,(-)^\dagger\cdot(1,M)\,)$.
\end{center}
Therefore we get a diagram in $\hat{\catK}$ of the form
\begin{center}
\begin{tikzpicture}
\node (a) at (0,0) {$\catK(-,X_0)$};
\node (b) at (2,1.5) {$\REM^T(-)$};
\node (c) at (4,0) {$\catK(-,X)$.};

\path[->] 
(a) edge node[scale=.7] [left,xshift=-0.1cm,yshift=0.1cm]{$F$} (b)
	edge node[scale=.7] (I) [below]{$I_\ast:=I\circ -$} (c)
(b) edge node[scale=.7] [right, xshift=0.1cm,yshift=0.1cm]{$U$} (c)
;

\draw[-{Implies},double distance=1.5pt,shorten >=10pt,shorten <=10pt] (I) to node[scale=.7] [right, xshift=0.1cm] {$t$} (b);

\end{tikzpicture}
\end{center}

\begin{lemma}
\label{lemma:rel-EM-to-rel-mnd}
The natural transformations defined above form a relative adjunction $F {\,\,}_{I_\ast}\!\dashv U$ in $\hat{\catK}$. Moreover, the relative monad induced by it is $(\catK(-,\,X),\,I_\ast,\,T_\ast)$. 
\end{lemma}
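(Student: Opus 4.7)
The plan is to treat the claim in two parts. For the relative adjunction $F\,{}_{I_\ast}\!\dashv U$ in $\hat{\catK}$, I need to exhibit the operator $U(-)t\colon [F,\,1]\to[I_\ast,\,U]$ as invertible. Unpacking what 2-cells in $\hat{\catK}$ look like via the representable structure, a 2-cell $\sigma\colon FA\to B$ in $\hat{\catK}[O,\REM^T(-)]$ amounts to, for each $K\in\catK$ and $a$ in the value of $O$ at $K$, a $K$-indexed relative EM-algebra morphism from the free algebra $(TA(a),(-)^\dagger_T\cdot(1,A(a)))$ to $B(a)=(M(a),(-)^{m(a)})$, whereas a 2-cell $\tau\colon I_\ast A\to UB$ is a 2-cell $IA(a)\to M(a)$ in $\catK$ natural in $a$. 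Thus the required bijection reduces to the statement that morphisms out of a free relative EM-algebra are determined by their restriction along the unit.

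Next I would construct the inverse of $U(-)t$ componentwise by applying the algebra operator: given $\tau$, set $(\tau^B)_a := (\tau_a)^{m(a)}\colon TA(a)\to M(a)$. The unit law for the left module structure $(-)^{m(a)}$ immediately gives $U(\tau^B)\cdot t_\ast A = \tau$; conversely, if $\sigma_a$ is an algebra morphism, the compatibility of $\sigma_a$ with the operators yields $\sigma_a = (\sigma_a\cdot tA(a))^{m(a)}$, so $(U\sigma\cdot t_\ast A)^B = \sigma$. One still has to verify that $\tau^B$ is genuinely an EM-algebra morphism, i.e.\ compatible with the extension operators; this is the main calculation, and it follows from the associativity axiom of $(-)^{m(a)}$ applied to a 2-cell of the form $\tau_a\cdot k$ for $k\colon IC\to TA(a)$, after rewriting $k = (k)^\dagger\cdot tC$ via the left unit law of the relative monad. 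Indexing and side naturality of the would-be operator $(-)^B$ are inherited pointwise from the naturality axioms of the operators $(-)^{m(a)}$, and naturality of the bijection in the span $(A,B)$ is automatic.

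For the second statement, I apply Lemma~\ref{lemma:rel-adj-to-rel-mnd} to the relative adjunction just produced. The induced relative monad has carrier $UF$, and unravelling the definitions yields $UFA = T\circ A = T_\ast A$ with unit the whiskered 2-cell $t_\ast\colon I_\ast \to T_\ast$. To identify the extension operator with the pointwise one of $(\catK(-,X),I_\ast,T_\ast)$, take a 2-cell $k\colon I_\ast A\to T_\ast B$: by the construction in Lemma~\ref{lemma:rel-adj-to-rel-mnd} its extension is $U((k)^\sharp)$, where $(-)^\sharp$ denotes the inverse operator constructed above, evaluated at the span $(A,FB)$ into free algebras; using the componentwise formula $((k)^\sharp)_a = (k_a)^\dagger_T\colon TA(a)\to TB(a)$, this is precisely the pointwise extension induced by $T$, as required.

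The main obstacle is the verification that $\tau^B$ is an algebra morphism; everything else is structural bookkeeping or direct application of the unit and associativity laws of the left module structure. This single compatibility check is where the associativity axiom for left modules is essential, and it is what makes the componentwise inverse land in $\REM^T(K)$ rather than merely in $\catK(K,X)$.
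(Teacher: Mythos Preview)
Your proposal is correct and follows essentially the same approach as the paper: both send an algebra morphism to its precomposition with the unit and, conversely, apply the algebra operator to a 2-cell to produce an algebra morphism, with the inverse checks using the unit law and the right unital law respectively. The paper carries this out pointwise at each $K\in\catK$ rather than over general spans in $\hat{\catK}$, and note that the associativity axiom for $(-)^{m(a)}$ (together with indexing naturality) already gives the algebra-morphism check directly---your rewriting $k=k^\dagger\cdot tC$ via the left unit law is not needed.
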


\begin{proof}
In order to prove the first claim we need to find, for any $M\in\catK(K,\,X_0)$ and $(N,\,(-)^n)\in\REM^T(K)$, a natural bijection
\begin{center}
$\REM^T(K)(FM,\,(N,\,(-)^n))\cong\catK[K,\,X](IM,\,N)$.
\end{center}
For any relative algebra map $\bar{f}\colon FM=(TM,\,(-)^\dagger)\rightarrow(N,\,(-)^n)$ we define the 2-cell~$\bar{f}^\#\colon IM\rightarrow N$ as $\bar{f}\cdot tM$. On the other hand for any 2-cell $f\colon IM\rightarrow N$ we can define $f^\flat:=f^n\colon TM\rightarrow N$ which is a relative algebras map because, for any $A,B\colon O\rightarrow X_0$ and $h\colon IA\rightarrow TMB$, 
\begin{center}
\begin{tikzpicture}
\node (IX) at (0,2) {$TA$};
\node (SX) at (2,2) {$TMB$};
\node (SY) at (2,0) {$NB$};
\draw[->] (IX) to node[scale=.7] [above]{$h^\dagger$} (SX);
\draw[->] (SX) to node[scale=.7] [right]{$f^nB=(fB)^n$} (SY);
\draw[->] (IX) to [bend right]  node[scale=.7] (G'f) [left, xshift=-0.1cm] {$(f^nB\cdot h)^n$} (SY);
\end{tikzpicture}
\end{center}
which is true since $(-)^n$ is a relative algebra operator. 

For any $f$ we can see that $(f^\flat)^\#=f^n\cdot tM$ which is exactly equal to $f$ by the unit law of $(-)^n$. Moreover, for any $\bar{f}$, we have 
\begin{align*}
(\bar{f}^\#)^\flat
&   =(\bar{f}\cdot tM)^n
& (\textrm{by definitions}) 
\\
& = \bar{f}\cdot(tM)^\dagger
& (\textrm{since}\;\bar{f}\;\textrm{is in}\,\REM^T(K))
\\
& = \bar{f}\cdot1_{TM}=\bar{f}
& (\textrm{by right unital law of}\;(-)^\dagger). 
\end{align*}
We can see that $UF=T_\ast$ and more generally the relative monad induced by $F {\,}_{I_\ast}\dashv U$ is the same as the one induced by $(X,\,I,\,T)$ in $\hat{\catK}$. 
\end{proof}

\begin{theorem}
\label{thm:EM-obj}
If $\catK$ has relative EM objects, then any relative monad is induced by a relative adjunction. 
\end{theorem}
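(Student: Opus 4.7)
The plan is to transport the relative adjunction $F \dashv_{I_\ast} U$ in $\hat{\catK}$ provided by Lemma~\ref{lemma:rel-EM-to-rel-mnd} along the 2-Yoneda embedding $y\colon \catK \to \hat{\catK}$, using the representing object $X^{I,T}$ furnished by the hypothesis. The fact that $y$ is a locally fully faithful (and strict) 2-functor will let me identify data between representables with data in $\catK$.

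First, I would invoke representability: the isomorphism $\REM^T(-) \cong \catK(-, X^{I, T}) = y(X^{I,T})$ of 2-functors $\catK^{op} \to \Cat$ makes each of the components $F$, $U$ and $t$ from Lemma~\ref{lemma:rel-EM-to-rel-mnd} into 1- or 2-cells between representables. By the 2-Yoneda lemma they correspond bijectively to 1-cells $\bar F\colon X_0 \to X^{I, T}$, $\bar U\colon X^{I, T} \to X$ and a 2-cell $\bar t\colon I \to \bar U \bar F$ in $\catK$, such that $y(\bar F) = F$, $y(\bar U) = U$ and $y(\bar t) = t$ modulo the chosen representation.

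Next, I would lift the relative adjunction property. For any span $(A, B)\colon O \to X_0;X^{I, T}$ in $\catK$, local fully faithfulness of $y$ combined with Lemma~\ref{lemma:rel-EM-to-rel-mnd} yields
\begin{align*}
\catK[O, X^{I,T}](\bar F A, B) & \cong \hat{\catK}[y(O), y(X^{I,T})](y(\bar F A), y(B)) \\
& \cong \hat{\catK}[y(O), y(X)](y(I A), y(\bar U B)) \\
& \cong \catK[O, X](I A, \bar U B),
\end{align*}
the middle isomorphism being the one expressing the relative adjunction in $\hat{\catK}$ and implemented by $\bar U(-) \bar t$ once unfolded through $y$. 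The three naturality conditions of Definition~\ref{defn:operator} transfer because $y$ strictly preserves horizontal composition of 1-cells and whiskerings of 2-cells, so $\bar F \dashv_I \bar U$ in $\catK$ with unit $\bar t$.

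Finally, by Lemma~\ref{lemma:rel-adj-to-rel-mnd} this relative adjunction induces a relative monad on $X_0$ with underlying pair $(X, I, \bar U \bar F)$, unit $\bar t$, and extension coming from the inverse of $\bar U(-)\bar t$. Applying $y$ and using that the induced-monad construction is 2-functorial in the relative adjunction, the image under $y$ of this induced monad coincides with the one produced in $\hat{\catK}$, which by Lemma~\ref{lemma:rel-EM-to-rel-mnd} is $(\catK(-, X), I_\ast, T_\ast) = y\bigl((X, I, T)\bigr)$; fully faithfulness of $y$ then forces the induced relative monad in $\catK$ to be $(X, I, T)$ on the nose. The main obstacle will be the bookkeeping in the third step: making precise that an invertible operator in $\hat{\catK}$, which is a priori indexed by arbitrary spans of natural transformations, restricts to an invertible operator in $\catK$ indexed by spans in $\catK$, and that the extension operator recovered from $\bar U(-)\bar t$ via Lemma~\ref{lemma:rel-adj-to-rel-mnd} is exactly the operator $(-)^\dagger_T$ given with the original relative monad.
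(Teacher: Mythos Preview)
Your proposal is correct and follows essentially the same approach as the paper: transport the relative adjunction of Lemma~\ref{lemma:rel-EM-to-rel-mnd} through the 2-Yoneda embedding using representability of $\REM^T(-)$, then check that the induced relative monad is the original one. The paper compresses your first two steps into the assertion that the covariant Yoneda embedding reflects relative adjunctions (citing Gray for the ordinary case), and records the explicit identifications $J_\ast=F$, $(U^{I,T})_\ast=U$, $U^{I,T}J=T$; your more detailed unpacking of the operator bookkeeping is exactly what lies behind that assertion.
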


\begin{proof}
The proof is just a matter of translating Lemma~\ref{lemma:rel-EM-to-rel-mnd} using the Yoneda lemma, since the covariant Yoneda embedding reflects adjunctions \cite[Proposition~I,6.4]{Gray:Adj} and also relative adjunctions. Explicitly, the relative adjunction that we get is
\[\begin{tikzcd}
	& {X^{I,T}} \\
	{X_0} && X
	\arrow["{J}", from=2-1, to=1-2]
	\arrow["{U^{I,T}}", from=1-2, to=2-3]
	\arrow[""{name=0, anchor=center, inner sep=0}, "I"', from=2-1, to=2-3]
	\arrow["t"', shorten <=6pt, shorten >=6pt, Rightarrow, from=0, to=1-2]
\end{tikzcd}\]
where $U^{I,T}\colon X^{I,T}\to X$ is the $X^{I,T}$-indexed relative EM-algebra corresponding to $1_{X^{I,T}}$ and $J_\ast=F$, $(U^{I,T})_\ast=U$ and $U^{I,T}J=T$. 
\end{proof}

\section{Relative Distributive Laws}
\label{sec:rel-distr-law}

In this section, we will define the counterpart of distributive laws for a relative monad $I,T\colon X_0\rightarrow X$ and a monad $S\colon X\rightarrow X$ which \emph{restricts to $X_0$}. When $I$ is an inclusion it is clear what we mean by this, but when $I$ is any 1-cell we need to define a new notion. Therefore, with the following definition, we introduce the notion of \emph{monad compatible} with a 1-cell $I:X_0\to X$.

\begin{defn}
\label{defn:comp-mnd}
Let $I\colon X_0\rightarrow X$ be 1-cell in a 2-category $\catK$. A \emph{monad compatible with~$I$} consists of a pair of monads $(X_0,\,S_0)$ and $(X,\,S)$ in $\catK$ such that $SI=IS_0$, $mI=Im_0$ and $sI=Is_0$. We will denote it with $\compS$. 
\end{defn}

To have a monad compatible with $I$ is the same as lifting $I$ to a morphism in~$\Mnd(\catK)$ with corresponding 2-cell the identity, i.e.\ requiring $(I,\,1)\colon (X_0,\,S_0)\rightarrow(X,\,S)$ to be a monad morphism. 
\newpage
\begin{defn}
\label{defn:rel-distr}
Let $(X,\,I,\,T)$ be a relative monad in $\catK$ and $(S,\,S_0)$ a monad compatible with $I$. A \emph{relative distributive law} of $T$ over $\compS$ consists of a 2-cell $d\colon ST\rightarrow TS_0$ in $\catK$ satisfying the following axioms:
\begin{center}
\begin{tikzpicture}
\node at (-2.5,2) {(D1)};
\node (S2T) at (-0.5,4) {$S^2T$};
\node (STS0) at (-0.5,2) {$STS_0$};
\node (TS02) at (-0.5,0) {$TS_0^2$};
\node (ST) at (1.5,4) {$ST$};
\node (TS0) at (1.5,0) {$TS_0$};
\path[->] 
(S2T) edge node[scale=.7] [above] {$mT$} (ST)
	edge node[scale=.7] [left] {$Sd$} (STS0)
(ST) edge node[scale=.7] [right] {$d$} (TS0)
(STS0) edge node[scale=.7] [left] {$dS_0$} (TS02)
(TS02) edge node[scale=.7] [below] {$Tm_0$} (TS0);

\node at (5,2) {(D2)};
\node (T) at (6.5,2) {$T$};
\node (ST') at (9,3) {$ST$};
\node (TS0') at (9,1) {$TS_0$};
\path[->] 
(ST') edge node[scale=.7] [right] {$d$} (TS0')
(T) edge node[scale=.7] [above] {$sT$} (ST')
	edge node[scale=.7] [below] {$Ts_0$} (TS0');
\end{tikzpicture}
\end{center}
and for any object $O\in\catK$, any pair of 1-cells $A,\,B\colon O\rightarrow X_0$ and any 2-cell $f\colon IA\rightarrow TB$
\begin{center}
\begin{tikzpicture}
\node at (-2,1) {(D3)};
\node (S2T) at (0,2) {$STA$};
\node (STS0) at (0,0) {$STB$};
\node (ST) at (2,2) {$TS_0A$};
\node (TS0) at (2,0) {$TS_0B$};
\path[->] 
(S2T) edge node[scale=.7] [above] {$dA$} (ST)
	edge node[scale=.7] [left] {$Sf^\dagger_T$} (STS0)
(ST) edge node[scale=.7] [right] {$(dB\cdot Sf)^\dagger_T$} (TS0)
(STS0) edge node[scale=.7] [below] {$dB$} (TS0);

\node at (5.5,1) {(D4)};
\node (T) at (7,2) {$SI$};
\node (T') at (7,0) {$IS_0$};
\node (ST') at (9.5,2) {$ST$};
\node (TS0') at (9.5,0) {$TS_0$.};
\path[->] 
(ST') edge node[scale=.7] [right] {$d$} (TS0')
(T) edge node[scale=.7] [above, yshift=0.1cm] {$St$} (ST')
(T') edge node[scale=.7] [below, yshift=-0.1cm] {$tS_0$} (TS0');
\draw [double equal sign distance] (T) to [out=-90, in=90] (T');
\end{tikzpicture}
\end{center}
\end{defn}

From now on, we will always consider a relative monad $(X,\,I,\,T)$ and a monad $(S,\,S_0)$ compatible with $I$. 

\begin{rmk}
We can see that, setting $I=1_X$, we get back the definition of a distributive law between two monads $T$ and $S$ in $\catK$ \cite{ManesE:mnd-comp}.
\end{rmk}

In the formal theory of monads \cite{StreetR:fortm} Street shows that a distributive law between two monads is an object of $\Mnd(\Mnd(\catK))$, that is a monad in the 2-category of monads. The next Proposition proves a similar result for relative distributive laws. 

\begin{prop}
\label{prop:rel-distr-mnd(rel)}
The objects of $\Mnd(\Rel(\catK))$ are exactly relative distributive law.
\end{prop}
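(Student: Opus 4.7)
The plan is to unpack what a monad in $\Rel(\catK)$ on a relative monad $(X,I,T)$ amounts to, and to match the resulting data and axioms, piece by piece, against those of a relative distributive law as in Definition~\ref{defn:rel-distr}.

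First, I would observe that the underlying endo-1-cell is a relative monad morphism $(S,S_0,d)\colon(X,I,T)\to(X,I,T)$. Unpacking Definition~\ref{def:rel-mnd-morph}, this consists of 1-cells $S\colon X\to X$ and $S_0\colon X_0\to X_0$ with $SI=IS_0$, together with a 2-cell $d\colon ST\to TS_0$. The unit law for this morphism reads $d\cdot St=tS_0$, which is exactly axiom (D4); and the extension law evaluated on a 2-cell $f\colon IA\to TB$ reads $(dB\cdot Sf)^\dagger_T\cdot dA=dB\cdot S(f^\dagger_T)$, which is exactly axiom (D3).

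Next, I would unpack the monad structure on $(S,S_0,d)$ inside $\Rel(\catK)$. A direct calculation using the composition rule for relative monad morphisms shows that $(S,S_0,d)\circ(S,S_0,d)=(S^2,S_0^2,dS_0\cdot Sd)$, and that the identity 1-cell on $(X,I,T)$ is $(1_X,1_{X_0},1_T)$. Hence the multiplication and unit of the monad are relative monad transformations $(m,m_0)\colon(S^2,S_0^2,dS_0\cdot Sd)\to(S,S_0,d)$ and $(s,s_0)\colon(1_X,1_{X_0},1_T)\to(S,S_0,d)$. Their defining equations $Ip_0=pI$ yield $mI=Im_0$ and $sI=Is_0$, while their compatibility diagrams yield $d\cdot mT=Tm_0\cdot dS_0\cdot Sd$ and $d\cdot sT=Ts_0$, that is, axioms (D1) and (D2). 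Since vertical and horizontal composition of relative monad transformations is componentwise, the monad associativity and unit axioms in $\Rel(\catK)$ decompose into the classical monad axioms for $(S,m,s)$ on $X$ and $(S_0,m_0,s_0)$ on $X_0$. Combined with the compatibility equations above, this is precisely the data of a monad $\compS$ compatible with $I$ in the sense of Definition~\ref{defn:comp-mnd}.

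Assembling everything, an object of $\Mnd(\Rel(\catK))$ based at $(X,I,T)$ is the same as a monad $\compS$ compatible with $I$ equipped with a 2-cell $d\colon ST\to TS_0$ satisfying (D1)--(D4), i.e.\ a relative distributive law. The converse, building a monad in $\Rel(\catK)$ from such data, is obtained by reading the above unpacking in reverse. The argument is purely formal bookkeeping; the one step that demands care is the computation of the horizontal composite $(S,S_0,d)\circ(S,S_0,d)=(S^2,S_0^2,dS_0\cdot Sd)$ in $\Rel(\catK)$, which is needed so that (D1) appears in the shape stated in Definition~\ref{defn:rel-distr}.
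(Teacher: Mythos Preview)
Your proposal is correct and follows essentially the same approach as the paper: both unpack a monad in $\Rel(\catK)$ into the endo-1-cell $(S,S_0,d)$ and the 2-cells $(m,m_0)$, $(s,s_0)$, and then match the resulting axioms against (D1)--(D4) together with the compatibility conditions $SI=IS_0$, $mI=Im_0$, $sI=Is_0$. Your write-up is in fact more detailed than the paper's (which presents the correspondence as a table), in particular making explicit the computation $(S,S_0,d)^2=(S^2,S_0^2,dS_0\cdot Sd)$ and noting that the monad axioms in $\Rel(\catK)$ decompose componentwise into the monad axioms for $S$ and $S_0$.
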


\begin{proof}
Let us unravel what is a monad in the 2-category $\Rel(\catK)$. As data we have a 1-cell $(S,S_0,d)\colon (X,I,T)\to(X,I,T)$ and 2-cells $(m,m_0)\colon (S,S_0,d)^2\to(S,S_0,d)$ and $(s,s_0)\colon(1_{X},1_{X_0},1_T)\to(S,S_0,d)$. Hence, we get the data for a relative monad $(X,I,T)$ and two monads $S_0\colon X_0\to X_0$ and $S\colon X\to X$.
The table below provides a correspondence between axioms.
\begin{center}
\renewcommand\arraystretch{1.3}\begin{tabular}{|c|c|r}
\cline{1-2}
Axiom & In  $\Mnd(\Rel(\catK))$ &\\
\cline{1-2}
(D1) and $mI=Im_0$ & $(m,\,m_0)$ is a 2-cell in  $\Rel(\catK)$ &\\ 
(D2) and $sI=Is_0$ & $(s,\,s_0)$ is a 2-cell in  $\Rel(\catK)$ &\\ 
(D3), (D4) and $SI=IS_0$ & $(S,\,S_0,\,d)$ is a 1-cell in $\Rel(\catK)$ & \qedhere\\
\cline{1-2}
\end{tabular}
\end{center}\qedhere
\end{proof}

The aim of the last part of this section is to prove a Beck-like theorem for relative distributive laws, using liftings to the algebras of a monad $(S,\,S_0)$ compatible with $I$. First of all, we will explicitly define a lifting of a relative monad $T$ to the algebras of $(S,\,S_0)$. Then we will show how we can go  from relative distributive laws to liftings (Lemma~\ref{lemma:distr-to-lift}) and vice versa (Theorem\til\ref{thm:lift-to-distr}). Finally, we show that these constructions are inverses of each other.  

Before proceeding with the definition of lifting to algebras, let us fix some notation. Given a monad $(S,\,S_0)$ compatible with $I\colon X_0\rightarrow X$, we always get a natural transformation induced on indexed algebras, $I_\ast\colon \Soalg(-)\rightarrow\Salg (-)$, defined, for any indexing object $K\in\catK$ and any $S_0$-algebra $(M,\,m)$, as 
\begin{center}
$(S_0M\xrightarrow{m} M)\longmapsto(SIM\xrightarrow{Im} IM)$,
\end{center}
where $Im\colon SIM\rightarrow IM$ is well-defined because $SI=IS_0$. Let us denote with $U_0$ and~$U$ the forgetful natural transformations from $\Soalg(-)$ and $\Salg(-)$ into $\catK(-,X_0)$ and~$\catK(-,X)$. 

\begin{defn}
\label{defn:lift-rel-alg}
Let $(X,\,I,\,T)$ be a relative monad in $\catK$. A \emph{lifting of $T$ to the algebras of $(S,\,S_0)$} is a relative monad $(I_\ast,\,\widehat{T})\colon \Soalg(-)\longrightarrow\Salg(-)$ \yellow in $\hat{\catK}$\black, such that: 
\begin{enumerate}[(i)]
\item \label{lift-rel-alg:comp}the following diagram commutes
\begin{center}
\begin{tikzpicture}
\node (a) at (0,0) {$\catK(-,X_0)$};
\node (b) at (3,2) {$\Salg(-)$};
\node (c) at (3,0) {$\catK(-,X)$;};
\node (d) at (0,2) {$\Soalg(-)$};

\path[->] 
(d) edge node[scale=.7] (F) [above]{$\widehat{T}$} (b)
	edge node[scale=.7] [left,xshift=-0.1cm,yshift=0.1cm]{$U_0$} (a)
(a) edge node[scale=.7] (I) [below]{$T\circ -$} (c)
(b) edge node[scale=.7] [right, xshift=0.1cm,yshift=0.1cm]{$U$} (c)
;
\end{tikzpicture}
\end{center}

\item  \label{lift-rel-alg:ext}the extension operator $(-)^\dagger_{\widehat{T}}$ of $\widehat{T}$ is induced by the one of $T$, i.e.\ for any pair of $K$-indexed $S_0$-algebras $(M,\,m)$ and $(N,\,n)$ if the following diagram on the left commutes, then the one on the right commutes as well
\begin{center}
\begin{tikzpicture}
\node (T) at (-0.5,2) {$IS_0M\yellow{=SIM}$};
\node (T') at (2.2,2) {$STN$};
\node (ST') at (-0.5,0) {$IM$};
\node (TS0') at (2.2,0) {$TN$};
\path[->] 
(ST') edge node[scale=.7] [below] {$f$} (TS0')
(T) edge node[scale=.7] [left] {$Im$} (ST')
	edge node[scale=.7] [above] {\yellow{$Sf$}} (T')
(T') edge node[scale=.7] [right] {$\widehat{T}(n)$} (TS0');

\node at (3.7,1) {$\Rightarrow$};

\node (T) at (5.2,2) {$STM$};
\node (T') at (7.4,2) {$STN$};
\node (ST') at (5.2,0) {$TM$};
\node (TS0') at (7.4,0) {$TN$;};
\path[->] 
(ST') edge node[scale=.7] [below] {$f^\dagger$} (TS0')
(T) edge node[scale=.7] [left] {$\widehat{T}(m)$} (ST')
	edge node[scale=.7] [above] {$Sf^\dagger$} (T')
(T') edge node[scale=.7] [right] {$\widehat{T}(n)$} (TS0');
\end{tikzpicture}
\end{center} 
\item \label{lift-rel-alg:unit}the unit $\widehat{t}$ is induced by $t$, i.e.\ for any $S_0$-algebra $(M,\,m)$ the following diagram commutes
\begin{center}
\begin{tikzpicture}
\node (T) at (0,2) {$IS_0M$};
\node (T') at (2.2,2) {$STM$};
\node (ST') at (0,0) {$IM$};
\node (TS0') at (2.2,0) {$TM$.};
\path[->] 
(ST') edge node[scale=.7] [below] {$tM$} (TS0')
(T) edge node[scale=.7] [left] {$Im$} (ST')
	edge node[scale=.7] [above] {$StM$} (T')
(T') edge node[scale=.7] [right] {$\widehat{T}m$} (TS0');
\end{tikzpicture}
\end{center}
\end{enumerate}
\end{defn}

\yellow The next proposition gives a description of a lifting $\widehat{T}$ in terms of its action on free algebras. This result will be useful in \Cref{thm:lift-to-distr,thm:rel-distr-eq-lift}.\black 

\begin{prop}
\label{prop:lift-on-struct}
Let $\widehat{T}$ be a lifting of $T$ to the algebras of $\compS$ and let us denote with $\widehat{{m_0}M}$ the $S$-algebra structure on $TS_0M$ given by $\widehat{T}$ applied to the free $S_0$-algebra $(S_0M,\,{m_0}M)$. Then, for any other $S_0$-algebra $(M,\,m)$ the $S$-algebra structure on $TM$ given by $\widehat{T}$ is 
\begin{center}
$STM\xrightarrow{ ST{s_0}M}STS_0M\xrightarrow{\widehat{{m_0}M}}TS_0M\xrightarrow{Tm}TM$.
\end{center}
\end{prop}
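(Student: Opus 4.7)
The plan is to reduce the structure map $\widehat{T}m$ on a general $S_0$-algebra $(M,m)$ to the case of the free $S_0$-algebra $(S_0M, m_0M)$, by exploiting the fact that the structure map $m$ is itself an $S_0$-algebra morphism out of the free algebra.

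First, I would observe that the associativity axiom for the $S_0$-algebra $(M,m)$, namely $m \cdot m_0M = m \cdot S_0 m$, is exactly the statement that $m\colon (S_0M, m_0M) \to (M,m)$ is a morphism of $S_0$-algebras. Since $\widehat{T}$ is a natural transformation $\Soalg(-) \to \Salg(-)$ valued in $\Cat$, each component is a functor, and by property~(\ref{lift-rel-alg:comp}) of Definition~\ref{defn:lift-rel-alg} its action on underlying 1-cells is post-composition with $T$. Applying $\widehat{T}$ to the algebra morphism $m$ therefore produces an $S$-algebra morphism
\[
Tm\colon (TS_0M,\,\widehat{m_0M}) \longrightarrow (TM,\,\widehat{T}m),
\]
whose defining commutative square is precisely the identity
\[
\widehat{T}m \cdot STm \;=\; Tm \cdot \widehat{m_0M}.
\]

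Next, I would pre-compose both sides with the 2-cell $STs_0M\colon STM \to STS_0M$. On the left, functoriality of the 1-cell $ST$ combined with the $S_0$-algebra unit law $m \cdot s_0M = 1_M$ gives $STm \cdot STs_0M = ST(m \cdot s_0M) = 1_{STM}$, so the left-hand side collapses to $\widehat{T}m$. Equating with the right-hand side yields
\[
\widehat{T}m \;=\; Tm \cdot \widehat{m_0M} \cdot STs_0M,
\]
which is the stated formula.

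No genuine obstacle is anticipated: the argument only uses the $S_0$-algebra axioms for $(M,m)$, the functoriality of $\widehat{T}$ on algebra morphisms together with Definition~\ref{defn:lift-rel-alg}~(\ref{lift-rel-alg:comp}), and the functoriality of the 1-cell $ST$. The only book-keeping is to stay consistent with the convention that $\cdot$ denotes composition in the usual right-to-left order.
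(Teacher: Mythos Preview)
Your proof is correct and follows essentially the same approach as the paper: recognise $m\colon (S_0M,\,m_0M)\to(M,\,m)$ as an $S_0$-algebra morphism via the associativity axiom, apply $\widehat{T}$ to obtain the square $\widehat{T}m\cdot STm = Tm\cdot\widehat{m_0M}$, and then pre-compose with $STs_0M$ and use the unit axiom to collapse the left-hand side to $\widehat{T}m$.
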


\begin{proof}
We begin noticing that, by one of the algebra axioms, $m$ is itself a $S_0$-algebra morphism  between $(S_0M,\,{m_0}M)$ and $(M,\,m)$. Therefore the diagram below commutes, as it is the diagram making $Tm$ a $S$-algebra morphism, 
\begin{center}
\begin{tikzpicture}
\node (T) at (0,2) {$STS_0M$};
\node (T') at (2.2,2) {$STM$};
\node (ST') at (0,0) {$TS_0M$};
\node (TS0') at (2.2,0) {$TM$.};
\path[->] 
(ST') edge node[scale=.7] [below] {$Tm$} (TS0')
(T) edge node[scale=.7] [left] {$\widehat{{m_0}M}$} (ST')
	edge node[scale=.7] [above] {$STm$} (T')
(T') edge node[scale=.7] [right] {$\widehat{T}m$} (TS0');
\end{tikzpicture}
\end{center}
Moreover, using the unit algebra axiom for $m$, we get the desired equality
\begin{center}
\begin{tikzpicture}
\node (STX) at (-2.5,2) {$STM$};
\node (T) at (0,2) {$STS_0M$};
\node (ST') at (2.5,2) {$STM$};
\node (T') at (0,0) {$TS_0M$};
\node (TS0') at (2.5,0) {$TM$.};

\path[->] 
(ST') edge node[scale=.7] [right] {$Tm$} (TS0')
(T) edge node[scale=.7] [above] {$\widehat{{m_0}M}$} (ST')
	edge node[scale=.7] [left] {$STm$} (T')
(T') edge node[scale=.7] [below] {$\widehat{T}m$} (TS0')
(STX) edge node[scale=.7] [above] {$ST{s_0}M$} (T)
	edge[bend right] node[scale=.7] [below, xshift=-1.2cm] {$1_{STM}=S1_{TM}$} (T');
\end{tikzpicture} \\
\hfill \qedhere
\end{center} 
\end{proof}

\begin{lemma}
\label{lemma:distr-to-lift}
Let $d\colon ST\rightarrow TS_0$ be a relative distributive law of $T$ over $\compS$. Then there is a lifting $\widehat{T}$ of $T$ to the algebras of $\compS$ defined on $K$-indexed $S_0$-algebras $(M,\,m)$ as  
$$STM\xrightarrow{dM}TS_0M\xrightarrow{Tm}TM$$
and on morphisms of $K$-indexed $S_0$-algebras $f\colon (M,\,m)\rightarrow(N,\,n)$ as
\begin{center}
$Tf\colon (TM,\,Tm\cdot dM)\rightarrow (TN,\,Tn\cdot dN)$. 
\end{center}
\end{lemma}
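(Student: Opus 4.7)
The plan is to verify that the proposed assignment satisfies all parts of Definition~\ref{defn:lift-rel-alg}, which I would treat in four stages: the object assignment, the morphism assignment, the unit and extension operator, and finally the relative monad axioms together with condition~(i).

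\textbf{Object assignment.} I would first check that $(TM,\,Tm\cdot dM)$ really is an $S$-algebra. The unit axiom $Tm\cdot dM\cdot sTM = 1_{TM}$ reduces via (D2) to $Tm\cdot Ts_0M = T(m\cdot s_0M) = 1_{TM}$, using the unit law of $(M,m)$. For algebra associativity one must show
\[
Tm\cdot dM\cdot mTM \;=\; Tm\cdot dM\cdot STm\cdot SdM.
\]
The left-hand side rewrites via (D1) as $T(m\cdot m_0M)\cdot dS_0M\cdot SdM$; the right-hand side rewrites, using the interchange law applied to $d\colon ST\to TS_0$ and the 2-cell $m$ (giving $dM\cdot STm = TS_0m\cdot dS_0M$), as $T(m\cdot S_0m)\cdot dS_0M\cdot SdM$. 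These agree by the $S_0$-algebra associativity $m\cdot S_0m=m\cdot m_0M$.

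\textbf{Morphism assignment.} For an $S_0$-algebra morphism $f\colon (M,m)\to(N,n)$, the equation $Tf\cdot Tm\cdot dM = Tn\cdot dN\cdot STf$ expressing that $Tf$ is an $S$-algebra morphism follows by whiskering $f\cdot m = n\cdot S_0f$ with $T$ and using interchange to commute $d$ past $Tf$. Performing these checks pointwise over the indexing object $K$ produces a functor $\widehat{T}\colon\Soalg(K)\to\Salg(K)$, natural in $K$.

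\textbf{Unit and extension operator.} I would set $\hat{t}_{(M,m)} := tM$. That $tM$ is an $S$-algebra morphism from $I_\ast(M,m)$ to $\widehat{T}(M,m)$ follows from $Tm\cdot dM\cdot StM = Tm\cdot tS_0M = tM\cdot Im$, using (D4) and interchange with $t$; this verifies condition~(iii). For the extension, given $f\colon I_\ast(M,m)\to\widehat{T}(N,n)$---that is, a 2-cell $f\colon IM\to TN$ satisfying $Tn\cdot dN\cdot Sf = f\cdot Im$---I would declare $f^\dagger_{\widehat{T}} := f^\dagger_T$. Condition~(ii) is the assertion that $f^\dagger_T$ is again an $S$-algebra morphism, i.e.\
\[
Tn\cdot dN\cdot Sf^\dagger_T \;=\; f^\dagger_T\cdot Tm\cdot dM,
\]
and this is the main computational step. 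The key identity is $Tm=(tM\cdot Im)^\dagger_T$, which follows from right naturality of $(-)^\dagger_T$ applied to the interchange equality $tM\cdot Im = Tm\cdot tS_0M$ combined with the right unital law $(tS_0M)^\dagger_T=1$. Using this together with associativity of $(-)^\dagger_T$, the right side rewrites as $(f\cdot Im)^\dagger_T\cdot dM$. Substituting the algebra morphism hypothesis and applying the analogous identity for $Tn$ together with associativity once more rewrites this as $Tn\cdot (dN\cdot Sf)^\dagger_T\cdot dM$, which equals the left side by (D3).

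\textbf{Relative monad axioms and condition (i).} The left unital, right unital, and associativity laws for $(I_\ast,\widehat{T})$, as well as the naturality axioms making $(-)^\dagger_{\widehat{T}}$ an operator, follow from the corresponding properties of $T$ because the underlying data of $\hat{t}$ and $(-)^\dagger_{\widehat{T}}$ are precisely those of $t$ and $(-)^\dagger_T$. Condition~(i) is then immediate from the construction. I expect the main obstacle to be condition~(ii), where (D3), the formula $Tm = (tM\cdot Im)^\dagger_T$, and associativity of $(-)^\dagger_T$ all have to be orchestrated together.
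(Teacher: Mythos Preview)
Your proposal is correct and follows essentially the same overall strategy as the paper: verify the $S$-algebra axioms via (D1) and (D2), verify condition~(iii) via (D4), and verify condition~(ii) using (D3) together with the hypothesis that $f$ is an algebra morphism.

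The one place where you diverge from the paper is in the bookkeeping for condition~(ii). You introduce the auxiliary identity $Tm=(tM\cdot Im)^\dagger_T$ and then invoke associativity of $(-)^\dagger_T$ twice (once for $Tm$, once for $Tn$). The paper instead appeals directly to left and right naturality of the operator $(-)^\dagger_T$: the single step $(f\cdot Im)^\dagger_T=f^\dagger_T\cdot Tm$ is precisely left naturality (with $\alpha=m$), and $(Tn\cdot dN\cdot Sf)^\dagger_T=Tn\cdot(dN\cdot Sf)^\dagger_T$ is right naturality (with $\beta=n$). This collapses your four-step rewriting to two lines and avoids the detour through the unit. Your route is perfectly valid---indeed your identity $Tm=(tM\cdot Im)^\dagger_T$ is itself a consequence of right naturality plus the right unit law---but it is worth noting that the operator naturality axioms do exactly this job directly, so you need not pass through associativity at all here.
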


\begin{proof}
First of all, we need to verify that the definition above gives a $K$-indexed $S$-algebra structure. One can check this using (D1) for the compatibility axiom and (D2) for the unit. Now we have left to prove part (\ref{lift-rel-alg:ext}) and (\ref{lift-rel-alg:unit}) of the definition of a lifting. For the first one what we need to check is the following implication
\begin{center}
\begin{tikzpicture}
\node (T) at (-0.2,2.5) {$IS_0M$};
\node (T') at (2.2,2.5) {$STN$};
\node (TS0Y) at (2.2,1) {$TS_0N$};
\node (ST') at (-0.2,-0.5) {$IM$};
\node (TS0') at (2.2,-0.5) {$TN$};
\path[->] 
(ST') edge node[scale=.7] [below] {$f$} (TS0')
(T) edge node[scale=.7] [left] {$Im$} (ST')
	edge node[scale=.7] [above] {$Sf$} (T')
(T') edge node[scale=.7] [right] {$dN$} (TS0Y)
(TS0Y) edge node[scale=.7] [right] {$Tn$} (TS0');

\node at (3.7,1) {$\Rightarrow$};

\node (T) at (5.2,2.5) {$STM$};
\node (T') at (7.6,2.5) {$STN$};
\node (TS0X) at (5.2,1) {$TS_0M$};
\node (TS0Y) at (7.6,1) {$TS_0N$};
\node (ST') at (5.2,-0.5) {$TM$};
\node (TS0') at (7.6,-0.5) {$TN$.};
\path[->] 
(ST') edge node[scale=.7] [below] {$f^\dagger$} (TS0')
(T) edge node[scale=.7] [left] {$dM$} (TS0X)
	edge node[scale=.7] [above] {$Sf^\dagger$} (T')
(TS0X) edge node[scale=.7] [left] {$Tm$} (ST')
	edge[dotted] node[scale=.7] [above] {$(dN\cdot Sf)^\dagger$} (TS0Y)
(T') edge node[scale=.7] [right] {$dN$} (TS0Y)
(TS0Y) edge node[scale=.7] [right] {$Tn$} (TS0');
\end{tikzpicture}
\end{center}
Using (D3) is enough to prove that the bottom square on the right commutes whenever the diagram on the left does. 
\begin{align*}
Tn\cdot(dN\cdot Sf)^\dagger 
&  = (Tn\cdot dN\cdot Sf)^\dagger
& (\textrm{by naturality of}\;(-)^\dagger) 
\\
& = (f\cdot Im)^\dagger
& (\textrm{by diagram on the left}) 
\\
& =  f^\dagger\cdot Tm
& (\textrm{by naturality of}\;(-)^\dagger)
\end{align*}
Similarly part (iii) follows from (D4) and the naturality of $t$. Indeed, for any $K$-indexed $S_0$-algebra $(M,\,m)$ the following diagram is commutative
\begin{center}
\begin{tikzpicture}
\node (T) at (-0.2,2.5) {$IS_0M$};
\node (T') at (2.2,2.5) {$STM$};
\node (TS0Y) at (2.2,1) {$TS_0M$};
\node (ST') at (-0.2,-0.5) {$IM$};
\node (TS0') at (2.2,-0.5) {$TM$.};
\path[->] 
(ST') edge node[scale=.7] [below] {$tM$} (TS0')
(T) edge node[scale=.7] [left] {$Im$} (ST')
	edge node[scale=.7] [above] {$StM$} (T')
	edge node[scale=.7] [below, yshift=-0.2cm] {$t{S_0M}$} (TS0Y)
(T') edge node[scale=.7] [right] {$dM$} (TS0Y)
(TS0Y) edge node[scale=.7] [right] {$Tm$} (TS0');
\end{tikzpicture} \\
\hfill \qedhere
\end{center}
\end{proof}

\begin{theorem}
\label{thm:lift-to-distr}
Let $\widehat{T}$ be a lifting of $T$ to the algebras of $\compS$. Then $d$ defined as 
$$ST\xrightarrow{ ST{s_0}}STS_0\xrightarrow{\widehat{{m_0}}}TS_0$$
where $\widehat{{m_0}}$ is the $X_0$-indexed $S$-algebra structure of $\widehat{T}(S_0,\,m_0)$, is a relative distributive law of $T$ over $\compS$.
\end{theorem}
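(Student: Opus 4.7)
The plan is to verify each of the four axioms (D1)--(D4) of a relative distributive law in turn, relying on the three conditions of Definition~\ref{defn:lift-rel-alg} for the lifting $\widehat{T}$, the $S$-algebra axioms for $\widehat{m_0}$, the compatibility conditions $SI = IS_0$, $sI = Is_0$, $mI = Im_0$, and the naturality axioms for the extension operator $(-)^\dagger_T$ of $T$.

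I would start with (D2) and (D4), which I expect to be routine. For (D2), I would expand $d \cdot sT = \widehat{m_0} \cdot STs_0 \cdot sT$, use interchange to rewrite $STs_0 \cdot sT = sTS_0 \cdot Ts_0$, and conclude by the unit law of the $S$-algebra $\widehat{m_0}$. For (D4), I would use $sI = Is_0$ together with interchange to rewrite $STs_0 \cdot St = StS_0 \cdot SIs_0$, then apply condition (iii) of Definition~\ref{defn:lift-rel-alg} to the $S_0$-algebra $(S_0, m_0)$ to obtain $\widehat{m_0} \cdot StS_0 = tS_0 \cdot Im_0$, and finish using the unit law of~$S_0$.

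For (D1) the key intermediate equation I would establish is $\widehat{m_0} = Tm_0 \cdot dS_0$. This follows from Proposition~\ref{prop:lift-on-struct} applied to the $X_0$-indexed $S_0$-algebra $(S_0, m_0)$, which yields $\widehat{m_0} = Tm_0 \cdot \widehat{m_0 S_0} \cdot STs_0 S_0$, combined with the identification $\widehat{m_0 S_0} = \widehat{m_0}\,S_0$ coming from naturality of $\widehat{T}$ along the 1-cell $S_0\colon X_0 \to X_0$. With this identity at hand, (D1) reduces to proving $d \cdot mT = \widehat{m_0} \cdot Sd$, which in turn follows from the associativity of the $S$-algebra $\widehat{m_0}$ together with the interchange identity $STs_0 \cdot mT = mTS_0 \cdot S^2Ts_0$.

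The hard part will be (D3), since this is where condition (ii) of Definition~\ref{defn:lift-rel-alg} has to be exploited. Given $\phi \colon IA \to TB$, I would apply condition (ii) with $(M, m) = (S_0 A, m_0 A)$ and $(N, n) = (S_0 B, m_0 B)$ to the 2-cell $dB \cdot S\phi \colon IS_0 A \to TS_0 B$; the hypothesis reduces, via $Im_0 = mI$ and naturality of $m$ on $\phi$, to (D1). This gives $(dB \cdot S\phi)^\dagger \cdot \widehat{m_0}\,A = \widehat{m_0}\,B \cdot S(dB \cdot S\phi)^\dagger$. Composing on the right with $STs_0 A$, invoking left naturality of $(-)^\dagger_T$ at the 2-cell $s_0 A$, the compatibility $Is_0 = sI$ together with naturality of $s$, and (D2) already proved in the form $dB \cdot sTB = Ts_0 B$, followed by right naturality of $(-)^\dagger_T$ at $s_0 B$, one collapses the left-hand side of (D3) to $\widehat{m_0}\,B \cdot STs_0 B \cdot S\phi^\dagger = dB \cdot S\phi^\dagger$, as required.
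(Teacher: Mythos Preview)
Your proposal is correct and follows essentially the same strategy as the paper: (D2) via the unit axiom of the $S$-algebra $\widehat{m_0}$, (D4) via condition~(\ref{lift-rel-alg:unit}) of Definition~\ref{defn:lift-rel-alg} applied to $(S_0,m_0)$, (D1) via the associativity of $\widehat{m_0}$ together with Proposition~\ref{prop:lift-on-struct} (your identity $\widehat{m_0}=Tm_0\cdot dS_0$), and (D3) via condition~(\ref{lift-rel-alg:ext}) instantiated at the free algebras $(S_0A,m_0A)$ and $(S_0B,m_0B)$ with $f=dB\cdot S\phi$, the hypothesis being verified using (D1).

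The only organisational difference worth noting is in (D3). The paper splits the target square into two rectangles $(\ast)$ and $(\ast\ast)$ and proves the first one separately, invoking the associativity law of $(-)^\dagger_T$ along the way; you instead compose the conclusion of condition~(\ref{lift-rel-alg:ext}) directly with $STs_0A$ and reduce using only left and right naturality of $(-)^\dagger_T$ together with (D2). Your route is marginally more direct and avoids the associativity law. One small inaccuracy: in your (D4) sketch the rewriting $STs_0\cdot St = StS_0\cdot SIs_0$ is just $S$ applied to naturality of $t$, not an instance of $sI=Is_0$; the compatibility condition actually needed at the end is $SI=IS_0$, so that $SIs_0=IS_0s_0$ and the $S_0$ unit law applies.
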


\begin{proof}
We need to prove that axioms (D1), (D2), (D3) and (D4) hold. In the following table we explain what will be used to prove each axiom. 
\begin{center}
\renewcommand\arraystretch{1.3}\begin{tabular}{|c|c|}
\hline
Axiom & Axioms used in the proof \\
\hline 
(D1) & $S$-algebra axiom for $\widehat{{m_0}_X}$ and Proposition~\ref{prop:lift-on-struct} \\ 
(D2) & unit algebra axiom for $\widehat{{m_0}_X}$ \\ 
(D3) & (D4), (D1), part (\ref{lift-rel-alg:ext}) of Definition~\ref{defn:lift-rel-alg} and more \\ 
(D4) & part (\ref{lift-rel-alg:unit}) Definition~\ref{defn:lift-rel-alg} \\
\hline
\end{tabular}
\end{center}
For (D1), (D2) and (D4) it is enough to write down what we get explicitly using the definition of $d$. The diagrams we get are the following

\begin{center}
\begin{tikzpicture}
\node at (-2,2) {(D1)};
\node (S2T) at (0,4) {$S^2T$};
\node (S2TS0) at (3,4) {$S^2TS_0$};
\node (STS0u) at (6,4) {$STS_0$};
\node (STS02) at (9,4) {$STS_0^2$};
\node (TS02) at (9,2) {$TS_0^2$};
\node (ST) at (0,0) {$ST$};
\node (STS0d) at (3,0) {$STS_0$};
\node (TS0) at (9,0) {$TS_0$};

\path[->] 
(S2T) edge node[scale=.7] [above] {$S^2T{s_0}$} (S2TS0)
	edge node[scale=.7] [left] {$m{T}$} (ST)
(S2TS0) edge node[scale=.7] [above] {$S\widehat{{m_0}}$} (STS0u)
	edge node[scale=.7] [left] {$m{TS_0}$} (STS0d)
(STS0u) edge node[scale=.7] [above] {$ST{s_0}{S_0}$} (STS02)
	edge node[scale=.7] [left] {$\widehat{{m_0}}$} (TS0)
(STS02) edge node[scale=.7] [right] {$\widehat{{m_0}{S_0}}$} (TS02)
(TS02) edge node[scale=.7] [right] {$T{m_0}$} (TS0)
(ST) edge node[scale=.7] [below] {$ST{s_0}$} (STS0d)
(STS0d) edge node[scale=.7] [below] {$\widehat{{m_0}}$} (TS0)
;
\end{tikzpicture} \\ \vspace{0.2cm}
\begin{tikzpicture}
\node at (-2,2) {(D2)};
\node (T) at (0,4) {$T$};
\node (ST) at (2.5,4) {$ST$};
\node (TS0u) at (0,2) {$TS_0$};
\node (STS0) at (2.5,2) {$STS_0$};
\node (TS0d) at (2.5,0) {$TS_0$};

\path[->] 
(T) edge node[scale=.7] [above] {$s{T}$} (ST)
	edge node[scale=0.7] [left] {$T{s_0}$} (TS0u)
(TS0u) edge node[scale=.7] [above] {$s{TS_0}$} (STS0)
	edge[bend right] node[scale=0.7] [left] {$1_{TS_0}$} (TS0d)
(ST) edge node[scale=0.7] [right] {$ST{s_0}$} (STS0)
(STS0) edge node[scale=0.7] [right] {$\widehat{{m_0}}$} (TS0d)
;

\node at (4.8,2) {\yellow{(D4)}};
\node (SI) at (6.5,4) {$SI$};
\node (ST) at (9,4) {$ST$};
\node (SIS0) at (6.5,2) {$IS_0^2$};
\node (STS0) at (9,2) {$STS_0$};
\node (IS0) at (6.5,0) {$IS_0$};
\node (TS0) at (9,0) {$TS_0$.};

\draw [double equal sign distance] (SI) to [out=-120, in=120] (IS0);
\path[->] 
(SI) edge node[scale=.7] [above] {$St$} (ST)
	edge node[scale=.7] [right] {$SI{s_0}$} (SIS0)
(SIS0) edge node[scale=.7] [above] {$St{S_0}$} (STS0)
	edge node[scale=.7] [right] {$I{m_0}$} (IS0)
(ST) edge node[scale=.7] [right] {$ST{s_0}$} (STS0)
(STS0) edge node[scale=.7] [right] {$\widehat{{m_0}}$} (TS0)
(IS0) edge node[scale=.7] [below] {$t{S_0}$} (TS0)
;
\end{tikzpicture}
\end{center}
Let us now look at (D3). For any 2-cell $\alpha\colon IA\rightarrow TB$ in $\catK$ (for any pair of 1-cells $A,B\colon O\rightarrow X_0$), we need to prove that 
\begin{equation}
\label{eq:lift-diag}
\begin{gathered}
\begin{tikzpicture}
\node (T) at (-0.3,2) {$STA$};
\node (T') at (2.5,2) {$STB$};
\node (ST') at (-0.3,0) {$TS_0A$};
\node (TS0') at (2.5,0) {$TS_0B$};
\path[->] 
(ST') edge node[scale=.7] [below] {$(dB\cdot S\alpha)^\dagger$} (TS0')
(T) edge node[scale=.7] [left] {$dA$} (ST')
	edge node[scale=.7] [above] {$S\alpha^\dagger$} (T')
(T') edge node[scale=.7] [right] {$dB$} (TS0');

\node at (4.5,1) {{i.e.}};

\node at (7.9,1.8) {($\ast$)};
\node at (7.9,0.25) {($\ast\ast$)};
\node (T) at (6.5,2.5) {$STA$};
\node (T') at (9.3,2.5) {$STB$};
\node (TS0X) at (6.5,1) {$STS_0A$};
\node (TS0Y) at (9.3,1) {$STS_0B$};
\node (ST') at (6.5,-0.5) {$TA$};
\node (TS0') at (9.3,-0.5) {$TB$.};
\path[->] 
(ST') edge node[scale=.7] [below] {$\alpha^\dagger$} (TS0')
(T) edge node[scale=.7] [left] {$ST{s_0}A$} (TS0X)
	edge node[scale=.7] [above] {$S\alpha^\dagger$} (T')
(TS0X) edge node[scale=.7] [left] {$\widehat{{m_0}A}$} (ST')
	edge[dotted] node[scale=.7] [above] {$S(dB\cdot S\alpha)^\dagger$} (TS0Y)
(T') edge node[scale=.7] [right] {$ST{s_0}B$} (TS0Y)
(TS0Y) edge node[scale=.7] [right] {$\widehat{{m_0}B}$} (TS0');
\end{tikzpicture}
\end{gathered}
\end{equation}
We will proceed proving that both squares ($\ast$) and ($\ast\ast$) in diagram (\ref{eq:lift-diag}) are commutative. Diagram ($\ast$) is the image through $S$ of a diagram ($\ast$')
\begin{center}
\begin{tikzpicture}
\node at (1.5,1) {($\ast$')};
\node (T) at (0,2) {$TA$};
\node (T') at (3,2) {$TB$};
\node (TS0X) at (0,0) {$TS_0A$};
\node (TS0Y) at (3,0) {$TS_0B$,};
\path[->] 
(T) edge node[scale=.7] [left] {$T{s_0}A$} (TS0X)
	edge node[scale=.7] [above] {$\alpha^\dagger$} (T')
(TS0X) edge node[scale=.7] [below] {$(dB\cdot S\alpha)^\dagger$} (TS0Y)
(T') edge node[scale=.7] [right] {$T{s_0}B$} (TS0Y);
\end{tikzpicture}
\end{center}
so it suffices to prove the commutativity of ($\ast$'). By left naturality of $(-)^\dagger$ and the equality $(t{S_0A})^\dagger=t^\dagger{S_0A}=1_{S_0A}$
\begin{center}
$T{s_0}A=(t{S_0A}\cdot I{s_0}A)^\dagger$,
\end{center} 
and therefore
\begin{align*}
(dB\cdot S\alpha)^\dagger \cdot T{s_0}A
&  = ((dB\cdot S\alpha)^\dagger \cdot t
{S_0A}\cdot I{s_0}A)^\dagger
& (\textrm{by associativity of}\;(-)^\dagger) 
\\
& = (dB\cdot S\alpha\cdot I{s_0}A)^\dagger
& (\textrm{by left unit of}\;(-)^\dagger) 
\\
& = (dB\cdot s{TB}\cdot\alpha)^\dagger
& (\textrm{by naturality of}\;s\;\textrm{and}\;Is_0=sI) 
\\
& = (T{s_0}B\cdot\alpha)^\dagger
& (\textrm{by (D4)}) 
\\
& = T{s_0}B\cdot\alpha^\dagger
& (\textrm{by associativity of}\;(-)^\dagger). 
\end{align*}
For ($\ast\ast$) we need to use axiom (\ref{lift-rel-alg:ext}) of Definition~\ref{defn:lift-rel-alg}. We can rewrite this axiom using Proposition~\ref{prop:lift-on-struct} and the definition of $d$, in the following way:
\begin{center}
\begin{tikzpicture}
\node (T) at (-0.2,2.5) {$IS_0M$};
\node (T') at (2.2,2.5) {$STN$};
\node (TS0Y) at (2.2,1) {$TS_0N$};
\node (ST') at (-0.2,-0.5) {$IM$};
\node (TS0') at (2.2,-0.5) {$TN$};
\path[->] 
(ST') edge node[scale=.7] [below] {$f$} (TS0')
(T) edge node[scale=.7] [left] {$Im$} (ST')
	edge node[scale=.7] [above] {$Sf$} (T')
(T') edge node[scale=.7] [right] {$d{N}$} (TS0Y)
(TS0Y) edge node[scale=.7] [right] {$Tn$} (TS0');

\node at (3.7,1) {$\Rightarrow$};

\node (T) at (5.2,2.5) {$STM$};
\node (T') at (7.6,2.5) {$STN$};
\node (TS0X) at (5.2,1) {$TS_0M$};
\node (TS0Y) at (7.6,1) {$TS_0N$};
\node (ST') at (5.2,-0.5) {$TM$};
\node (TS0') at (7.6,-0.5) {$TN$};
\path[->] 
(ST') edge node[scale=.7] [below] {$f^\dagger$} (TS0')
(T) edge node[scale=.7] [left] {$d{M}$} (TS0X)
	edge node[scale=.7] [above] {$Sf^\dagger$} (T')
(TS0X) edge node[scale=.7] [left] {$Tm$} (ST')
(T') edge node[scale=.7] [right] {$dN$} (TS0Y)
(TS0Y) edge node[scale=.7] [right] {$Tn$} (TS0');
\end{tikzpicture}
\end{center}
for $(M,\,m)$ and $(N,\,n)$ two $S_0$-algebras and $f$ a 2-cell in $\catK$. If we consider the case with $(M,\,m):=(S_0A,\,{m_0}A)$, $(N,\,n):=(S_0B,\,{m_0}B)$ and $f:=dB\cdot S\alpha$ we would get ($\ast\ast$) on the right (using again Proposition~\ref{prop:lift-on-struct}). So it is enough to prove that with these choices the diagram on the left is commutative, i.e.\
\begin{center}
\begin{tikzpicture}
\node (a) at (0,3) {$S^2IA$};
\node (a') at (-2,3) {$IS_0^2A$};
\node (b) at (3,3) {$S^2TB$};
\node (c) at (6,3) {$STS_0B$};
\node (d) at (6,1.5) {$TS_0^2B$};
\node (e) at (0,0) {$SIA$};
\node (e') at (-2,0) {$IS_0A$};
\node (f) at (3,0) {$STB$};
\node (g) at (6,0) {$TS_0B$.};

\path[->]
(a) edge node[scale=.7] [above] {$S^2\alpha$} (b)
	edge node[scale=.7] [left] {$m{IA}$} (e)
(a')	edge node[scale=.7] [left] {$I{m_0}A$} (e')
(b) edge node[scale=.7] [above] {$SdB$} (c)
	edge node[scale=.7] [left] {$m{TB}$} (f)
(c) edge node[scale=.7] [right] {$d{S_0B}$} (d)
(d) edge node[scale=.7] [right] {$T{m_0}B$} (g)
(e) edge node[scale=.7] [below] {$S\alpha$} (f)
(f) edge node[scale=.7] [below] {$dB$} (g);

\draw [double equal sign distance] (a) to (a');
\draw [double equal sign distance] (e) to (e');
\end{tikzpicture}
\end{center}
The square on the left commutes because $\compS$ is compatible with $I$, the one in the center commutes by naturality of $m$ and the diagram on the right is (D1) applied to~$B$. 
\end{proof}

In summary, Theorem~\ref{thm:lift-to-distr} and Lemma~\ref{lemma:distr-to-lift} give us two constructions:
\begin{center}
\begin{tikzpicture}
\node (a) at (0,0) {Rel. Distr. Laws};
\node (b) at (4,0) {Lift. to Alg.};

\path[->]
(a) edge[bend left] node[scale=.7] [above] {$(-)^\flat$} (b) 
(b) edge[bend left] node[scale=.7] [below] {$(-)^\sharp$} (a);
\end{tikzpicture}
\end{center} 
The following Theorem shows that these constructions are inverses of each other. 

\begin{theorem}
\label{thm:rel-distr-eq-lift}
Let $(X,\,I,\,T)$ be a relative monad in $\catK$ and $(S,\,S_0)$ a monad compatible with $I$. Relative distributive laws $d\colon ST\rightarrow TS_0$ of $T$ over $(S,\,S_0)$ are equivalent to liftings of $T$ to the algebras of $(S,\,S_0)$. 
\end{theorem}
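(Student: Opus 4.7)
The plan is to show that the two constructions $(-)^\flat\colon d\mapsto\widehat{T}$ from \Cref{lemma:distr-to-lift} and $(-)^\#\colon\widehat{T}\mapsto d$ from \Cref{thm:lift-to-distr} are mutually inverse. The key computational inputs are \Cref{prop:lift-on-struct}, the interchange law in $\catK$, and the unit axiom for the monad $S_0$.

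First I would verify that $(d^\flat)^\# = d$ for a relative distributive law $d\colon ST\to TS_0$. By construction, $d^\flat$ equips $TS_0$ (coming from the free $S_0$-algebra $(S_0,m_0)$) with the $S$-algebra structure $\widehat{m_0}=Tm_0\cdot dS_0$, and then $(d^\flat)^\# = \widehat{m_0}\cdot STs_0 = Tm_0\cdot dS_0\cdot STs_0$. The interchange law applied to the horizontal composite of $d$ and $s_0$ rewrites $dS_0\cdot STs_0$ as $TS_0 s_0\cdot d$, after which the unit axiom $m_0\cdot S_0 s_0 = 1_{S_0}$ collapses the remaining prefix to an identity, leaving $d$ as required.

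For the reverse direction, starting from a lifting $\widehat{T}$, set $d:=\widehat{T}^\#=\widehat{m_0}\cdot STs_0$. For a $K$-indexed $S_0$-algebra $(M,m)$, the lifting $d^\flat$ sends it to $(TM,\,Tm\cdot dM) = (TM,\,Tm\cdot\widehat{m_0 M}\cdot STs_0 M)$, where I would use naturality of $\widehat{T}$ in the indexing to identify $\widehat{m_0}M$ with $\widehat{m_0 M}$. By \Cref{prop:lift-on-struct} this is precisely the $S$-algebra structure that $\widehat{T}$ already assigns to $(M,m)$, so $d^\flat$ and $\widehat{T}$ agree on objects; on morphisms of $S_0$-algebras both act by post-composition with $T$. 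Conditions (\ref{lift-rel-alg:ext}) and (\ref{lift-rel-alg:unit}) of \Cref{defn:lift-rel-alg} uniquely determine the extension operator and unit of any lifting in terms of those of $T$, so the remaining relative monad data on $d^\flat$ and $\widehat{T}$ automatically coincide.

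The main delicate points will be the interchange step in the first direction and the identification $\widehat{m_0}M=\widehat{m_0 M}$ via naturality in the second; once these are in place, the remainder reduces to bookkeeping with the formulas already supplied by \Cref{lemma:distr-to-lift}, \Cref{prop:lift-on-struct}, and \Cref{thm:lift-to-distr}.
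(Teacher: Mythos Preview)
Your proposal is correct and follows essentially the same approach as the paper. The paper's proof is extremely terse: it performs the same four-line computation for $(d^\flat)^\#=d$ (calling your interchange step ``naturality of $d$''), and for the reverse direction simply says ``using \Cref{prop:lift-on-struct} we can see that $(\widehat{T}^\#)^\flat=\widehat{T}$'' without spelling out, as you do, that the agreement on algebra structures together with conditions~(\ref{lift-rel-alg:ext}) and~(\ref{lift-rel-alg:unit}) of \Cref{defn:lift-rel-alg} forces the remaining relative monad data to coincide.
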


\begin{proof}
Let start proving that $(d^\flat)^\sharp=d$:
\begin{align*}
(d^\flat)^\sharp
&  = d^\flat({m_0})\cdot ST{s_0}
& (\textrm{by definition of}\;(-)^\sharp) 
\\
& = T{m_0}\cdot d{S_0}\cdot ST{s_0}
& (\textrm{by definition of}\;(-)^\flat)
\\
& = T{m_0}\cdot TS_0{s_0} \cdot d
& (\textrm{by naturality of}\;d) 
\\
& = d
& (\textrm{by right unit of}\;S_0).
\end{align*}
On the other hand, using Proposition~\ref{prop:lift-on-struct} we can see that $(\widehat{T}^\sharp)^\flat=\widehat{T}$. 
\end{proof}

This result gives us the first equivalence of the counterpart of Beck's Theorem for relative distributive laws. We will prove the second equivalence in Section~\ref{sec:rrm-kl-ext} (Theorem~\ref{thm:rel-distr-eq-kl}), thus getting the entire counterpart (Theorem~\ref{thm:rel-beck}).

\section{Relative Right Modules and Kleisli Objects}
\label{sec:rrm-kl-ext}

In the formal theory of monads \cite{LackS:fortm, StreetR:fortm}, if we consider left modules (algebras) for a monad in $\catK^\op$ we get what are called \emph{right modules} for a monad in $\catK$. Using this duality, all the results for algebras can be translated easily to right modules. Unfortunately, when we consider relative monads it is not possible to take advantage of this duality. The issue is that the objects of $\Rel(\catK^\op)$ are not relative monads. Indeed, we get two 1-cells, a unit 2-cell together with an extension operator in $\catK^\op$, which is not the same as an operator in $\catK$. For this reason, we will need to define relative right modules explicitly. 

In this section we will study right modules for relative monads, which we will call \emph{relative right modules}. We will start giving the definition of the category~$\RRM_T(K)$ of $K$-indexed relative right $T$-modules for an indexing object $K\in\catK$ and a relative monad $(X,\,I,\,T)$ in $\catK$. This construction has to satisfy a couple of conditions. First of all, we want that, whenever $T$ is an actual monad in $\catK$, then the notion of relative right module and the usual one of right module should coincide. Moreover, when we consider $\catK=\Cat$, then the Kleisli category for a relative monad defined in \cite{AltChap:mnd-no-end} should represent the 2-functor $\RRM_T(-)$ of relative right modules.

Once provided the appropriate setting, we will use $\RRM_T(-)$ to construct a 2-category $\RLift(\catK)$ with objects relative monads in $\catK$, 1-cells \emph{lifting to relative right modules} and 2-cells \emph{maps of liftings to relative right modules}. These concepts generalise the ones of lifting to right modules in the monad case. Then, we show that $\RLift(\catK)$ is 2-isomorphic to $\Rel(\catK)$. Finally, thanks to this equivalence, we prove a Beck-like theorem stating that relative distributive laws are the same as liftings to relative right modules, with the appropriate definition of the latter. 

\subsection*{The Category of Relative Right Modules}
\addcontentsline{toc}{subsection}{The Category of Relative Right Modules}
\markboth{The Category of Relative Right Modules}{The Category of Relative Right Modules}

From now on we will consider a fixed relative monad $(X,\,I,\,T)\in\Rel(\catK)$ where $I\colon X_0\to X$. 

\begin{defn}
Let $K\in\catK$. A \emph{$K$-indexed relative right $T$-module} consists of a 1-cell $M\colon X_0\rightarrow K$ together with an operator $(-)_m\colon [I,\,T]\rightarrow[M,\,M]$ satisfying the following axioms:
\begin{itemize}
\item \emph{unit law}, i.e.\ $t_m=1_M$;
\item \emph{associativity}, i.e.\ for any 2-cells $h\colon IA\rightarrow TB$ and $k\colon IB\rightarrow TC$ (given any three 1-cells $A,B,C\colon O\rightarrow X_0$) 
\begin{center}
\begin{tikzpicture}
%
\node (IX') at (6.5,2) {$MA$};
\node (SX') at (8.5,2) {$MB$};
\node (SY') at (8.5,0) {$MC$.};
\draw[->] (IX') to node[scale=.7] [above]{$h_m$} (SX');
\draw[->] (SX') to node[scale=.7] [right]{$k_m$} (SY');
\draw[->] (IX') to [bend right]  node[scale=.7] (G'f) [left, xshift=-0.1cm] {$(k^\dagger\cdot h)_m$} (SY');
\end{tikzpicture}
\end{center}
\end{itemize}
\end{defn} 

\begin{defn}
Let $(M,\,(-)_m)$ and $(N,\,(-)_n)$ be two $K$-indexed relative right $T$-modules. A \emph{$K$-indexed relative right module morphism} $f\colon (M,\,(-)_m)\rightarrow (N,\,(-)_n)$ consists of a 2-cell $f\colon M\rightarrow N$ such that for any 2-cell $h\colon IA\rightarrow TB$ (given any pair of 1-cells $A,B\colon O\rightarrow X_0$):
\begin{center}
\begin{tikzpicture}
\node (IX) at (0,2) {$MA$};
\node (NA) at (0,0) {$NA$};
\node (SX) at (2,2) {$MB$};
\node (SY) at (2,0) {$NB$.};
\draw[->] (IX) to node[scale=.7] [above]{$h_m$} (SX);
\draw[->] (SX) to node[scale=.7] [right]{$fB$} (SY);
\draw[->] (IX) to  node[scale=.7] (G'f) [left] {$fA$} (NA);
\draw[->] (NA) to  node[scale=.7] (G'f) [below] {$h_n$} (SY);
\end{tikzpicture}
\end{center}
\end{defn}

Relative right modules and their morphisms in $\catK=\Cat$ become exactly \emph{modules over a relative monad and their morphisms} in the sense of \cite[Definition~9 and~14]{ahrens_2016}.

Clearly the definitions above form a category $\RRM_T(K)$ of $K$-indexed relative right modules. Therefore we have an induced 2-functor 
\begin{center}
$\RRM_T(-)\colon \catK\rightarrow\Cat$. 
\end{center}

\begin{rmk} 
We briefly show that if $I=1_X$, then $\RRM_T(K)$ is equal to the category of $K$-indexed right modules for a monad in the usual sense. \yellow Given \black a relative right module for a monad (i.e.\ a relative monad with $I=1_X$), we can prove that $\rho^M:=(1_T)_m$ is a $K$-indexed right module structure. We need to prove the following axioms
\begin{center}
\begin{tikzpicture}
\node (IX) at (0,2) {$MT^2$};
\node (MT) at (0,0) {$MT$};
\node (SX) at (2,2) {$MT$};
\node (SY) at (2,0) {$M$};
\draw[->] (IX) to node[scale=.7] [above]{$\rho^MT$} (SX);
\draw[->] (SX) to node[scale=.7] [right]{$\rho^M$} (SY);
\draw[->] (IX) to  node[scale=.7] (G'f) [left] {$Mn$} (MT);
\draw[->] (MT) to node[scale=.7] [below] {$\rho^M$} (SY);

\node (IX') at (6.5,2) {$M$};
\node (SX') at (8.5,2) {$MT$};
\node (SY') at (8.5,0) {$M$,};
\draw[->] (IX') to node[scale=.7] [above]{$Mt$} (SX');
\draw[->] (SX') to node[scale=.7] [right]{$\rho^M$} (SY');
\draw[double equal sign distance] (IX') to [bend right]  node[scale=.7] (G'f) [left, xshift=-0.1cm] {} (SY');
\end{tikzpicture}
\end{center}
where $n:=(1_T)^\dagger$. We can deduce them in the following way:
\begin{align*}
\rho^M\cdot Mn
&  = (1_T)_m\cdot M(1_T)^\dagger
& (\textrm{by definition}) 
\\
& = (1_T\cdot (1_T)^\dagger)_m=((1_T)^\dagger\cdot1_{T^2})_m
& (\textrm{by left naturality of}\;(-)_m)
\\
& = (1_T)_m\cdot(1_{T^2})_m
& (\textrm{by associativity of}\;(-)_m) 
\\
& = (1_T)_m\cdot(1_{T})_mT
& (\textrm{by left naturality of}\;(-)_m)
\\
&  = \rho^M\cdot \rho^MT
& (\textrm{by definitions}) 
\\
& & \\
\rho^M\cdot Mt
&  = (1_T)_m\cdot Mt
& (\textrm{by definitions}) 
\\
& = (1_T\cdot t)_m=t_m
& (\textrm{by left naturality of}\;(-)_m)
\\
& = 1_M
& (\textrm{by unit law for}\;(-)_m). 
\end{align*}
On the other side, if we begin with a right module structure $\rho^M\colon MT\rightarrow M$, we can define a relative right module structure as 
$$f\colon A\rightarrow TB\longmapsto f_m:=\rho^MB\cdot Mf$$
Then by definition $t_m=\rho^M\cdot Mt$ which is equal to $1_M$ by the unit axiom for~$\rho^M$. Therefore we have left to prove that, for any $f\colon A\rightarrow TB$ and $g\colon B\rightarrow TC$, $g_m\cdot f_m=(g^\dagger\cdot f)_m$ where $g^\dagger=nC\cdot Tg$. We can prove this by looking at the diagram 
\begin{center}
\begin{tikzpicture}
\node (S2TS0) at (3,4) {$MA$};
\node (STS0u) at (6,4) {$MTB$};
\node (STS02) at (9,4) {$MB$};
\node (MT2C) at (6,2) {$MT^2C$};
\node (TS02) at (9,2) {$MTC$};
\node (STS0d) at (3,0) {$MTB$};
\node (MTC) at (6,0) {$MTC$};
\node (TS0) at (9,0) {$MC$.};

\path[->] 
(S2TS0) edge node[scale=.7] [above] {$Mf$} (STS0u)
	edge node[scale=.7] [left] {$Mf$} (STS0d)
(STS0u) edge node[scale=.7] [right] {$MTg$} (MT2C)
	edge node[scale=.7] [above] {$\rho^MB$} (STS02)
(STS02) edge node[scale=.7] [right] {$Mg$} (TS02)
(TS02) edge node[scale=.7] [right] {$\rho^MC$} (TS0)
(STS0d) edge node[scale=.7] [below] {$Mg^\dagger$} (MTC)
(MTC) edge node[scale=.7] [below] {$\rho^MC$} (TS0)
(MT2C) edge node[scale=.7] [below] {$\rho^MC$} (TS02)
	edge node[scale=.7] [left] {$MnC$} (MTC)
;
\draw[double equal sign distance] (STS0u) to  node[scale=.7] (G'f) [left, xshift=-0.1cm] {} (STS0d);
\end{tikzpicture}
\end{center}
The commutativity follows from the naturality and multiplication axiom for $\rho^M$. Using indexing and left naturality for $(-)_m$ we can see that these two constructions provide a bijection between relative right modules for a monad and the usual notion of right modules. 
\end{rmk}

\begin{defn}
We say that a relative monad $(X,\,I,\,T)\in\Rel(\catK)$ has a \emph{relative Kleisli object} if $\RRM_T(-)$ is representable. We will denote a representing object with $X_{I,T}$. 
\end{defn}

\begin{eg}
\label{eg:Kl-obj-cat}
Let $\Kl(T)$ denote the Kleisli category for a relative monad $(\catX,\,I,\,T)$ in $\Cat$ \cite{AltChap:mnd-no-end}. We can see that this gives us a relative Kleisli object for~$T$. Let $(M,\,(-)_m)$ be a $\catK$-indexed relative right module. We can define a functor $\bar{M}\colon \Kl(T)\rightarrow\catK$ the same as $M$ on objects, and for any map $f\colon x\nrightarrow y\in\Kl(T)$, i.e.\ $f\colon Ix\rightarrow Ty\in\catX$, $\bar{M}f:=f_m\colon Mx\rightarrow My$. The unit law and associativity of $(-)_m$ ensure that $\bar{M}$ respects identities and composition respectively. Moreover, $\bar{M}$ defined in this way is such that $\bar{M}J_0=M$. The equality on objects is trivially true, whilst for the action on maps we have 
\begin{align*}
\bar{M}J_0f
&  = (Tf\cdot t_x)_m
& (\textrm{by definitions}) 
\\
& = Mf\cdot(t_x)_m
& (\textrm{by right naturality of}\;(-)_m)
\\
& = Mf\cdot 1_M=Mf
& (\textrm{by unit law for}\;(-)_m) .
\end{align*} 
On the other hand, if we start with a functor $\bar{M}\colon \Kl(T)\rightarrow\catK$ then we can define a $\catK$-indexed relative right module in the following way. First of all we define $M\colon \catX_0\rightarrow\catK$ as $\bar{M}J_0$. Then as operator structure we define, for any map $f\colon Ix\rightarrow Ty$, $f_m$ as $\bar{M}f\colon Mx\rightarrow My$. In an analogous way as before, the functoriality of $\bar{M}$ proves the unit law and associativity for $(-)_m$ defined in this way. 
\end{eg}

In the formal theory of monads \cite{StreetR:fortm}, the category of $K$-indexed right $T$-modules is shown to be equivalent to $\Mnd(\catK^\op)^\op[\,(X,\,T),\,(K,\,1_K)\,]$, and therefore the Kleisli object construction gives a left adjoint of the inclusion $\catK\rightarrow\Mnd(\catK^\op)^\op$ sending an object $K\in\catK$ to the unital monad $(K,\,1_K)$. The following proposition shows why this is not possible in our setting. 

\begin{prop}
Let $(X,\,I,\,T)$ be a relative monad in $\catK$ and $K\in\catK$ an object. Then the category of relative monad morphisms $\Rel(\catK)[\,(X,\,I,\,T),\,(K,\,1_K,\,1_K)\,]$ is a subcategory of $\RRM_T(K)$. 
\end{prop}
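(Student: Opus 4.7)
The plan is to construct a functor
\[
\Phi\colon \Rel(\catK)[(X,I,T),(K,1_K,1_K)]\longrightarrow \RRM_T(K)
\]
and to verify that it exhibits the hom-category as a subcategory. I would begin by unpacking what a morphism $(F,F_0,\phi):(X,I,T)\to(K,1_K,1_K)$ amounts to when the target has base $1_K$ together with unit and extension that are both identities. The constraint $FI=1_KF_0$ forces $F_0=FI$, the unit law collapses to $\phi\cdot Ft=1_{FI}$, and the extension law becomes
\[
\phi B\cdot F(k^\dagger_T)=(\phi B\cdot Fk)\cdot\phi A
\]
for every $k:IA\to TB$. A transformation $(p,p_0)$ similarly reduces to a 2-cell $p:F\to F'$ with $p_0:=pI$, subject to $\phi'\cdot pT=pI\cdot\phi$.

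On objects I would set $\Phi(F,F_0,\phi):=(F_0,(-)_m)$ with $h_m:=\phi B\cdot Fh$ for $h:IA\to TB$. The three naturality axioms of an operator drop out easily: indexing naturality from compatibility of whiskering with composition; left naturality from the 2-functoriality of $F$; and right naturality from the very fact that $\phi:FT\to FI$ is a 2-cell, i.e.\ $\phi B'\cdot FT\beta=FI\beta\cdot\phi B$ for $\beta:B\to B'$. The module unit law $t_m=1_{F_0}$ is literally the morphism's unit law, while the associativity $(k^\dagger\cdot h)_m=k_m\cdot h_m$ unfolds as
\[
\phi C\cdot F(k^\dagger)\cdot Fh=(\phi C\cdot Fk)\cdot\phi B\cdot Fh,
\]
which is one application of the morphism's extension law to $k:IB\to TC$.

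On morphisms I would put $\Phi(p,p_0):=p_0$. The right-module-morphism condition $p_0 B\cdot h_m=h_{m'}\cdot p_0 A$ unfolds to $pIB\cdot\phi B\cdot Fh=\phi' B\cdot F'h\cdot pIA$, which follows by substituting the compatibility $\phi' B\cdot pTB=pIB\cdot\phi B$ and then invoking the 2-naturality of $p$ on $h:IA\to TB$. Functoriality is immediate from the definitions. To conclude that $\Phi$ exhibits a subcategory one observes that the assignments are injective: both $F_0$ and, via the unit law, the 2-cell $\phi$ can be read off the induced module, while for transformations the datum $p_0=pI$ together with the compatibility condition pins down $p$. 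The main obstacle is the bookkeeping in the three operator naturality axioms and the extension law, but once the triviality of the target relative monad is exploited so that its axioms align exactly with those of a right module, each verification reduces to a short diagram chase.
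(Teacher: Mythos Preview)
Your construction of the functor $\Phi$ is exactly the paper's argument, down to the formula $h_m:=\phi B\cdot Fh$ and the verification that the unit law gives the module unit while the extension law (precomposed with $Fh$) gives module associativity; the treatment of transformations is likewise identical.

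Where you go beyond the paper is in the final injectivity claim, and here there is a genuine gap. You assert that $F_0$ and $\phi$ can be read off the induced module, but you never explain how to recover the 1-cell $F\colon X\to K$ itself: only $F_0=FI$ is visible in the module, and there is no reason in a general 2-category that distinct $F,F'\colon X\to K$ with $FI=F'I$ could not produce the same operator $(-)_m$. The same issue arises for transformations: knowing $p_0=pI$ and the compatibility square does not obviously pin down $p\colon F\to F'$ unless $I$ has some epimorphism-like property. The paper does not address injectivity at all --- it simply constructs the functor and stops --- so ``subcategory'' here is being used informally to mean that there is a comparison functor into $\RRM_T(K)$. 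If you want a literal subcategory inclusion you would need an additional hypothesis on $I$, or to weaken the claim accordingly.
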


\begin{proof}
A relative monad map $(M,\,M_0,\,\rho):(X,\,I,\,T)\rightarrow(K,\,1_K,\,1_K)$ consists of a pair of maps $M\colon X\rightarrow K$ and $M_0\colon X_0\rightarrow K$ such that $MI=M_0$, and a 2-cell $\rho\colon MT\rightarrow M_0$ satisfying the axioms (for any 2-cell $f\colon IA\rightarrow TB$)
\begin{center}
\begin{tikzpicture}
\node at (-1,1) {(i)};
\node (IX) at (0,2) {$MI$};
\node (SX) at (2,2) {$MT$};
\node (SY) at (2,0) {$M_0$};
\draw[->] (IX) to node[scale=.7] [above, xshift=-0.1cm]{$Mt$} (SX);
\draw[->] (SX) to node[scale=.7] [right]{$\rho$} (SY);
\draw[double equal sign distance] [bend right] (IX) to  node[scale=.7] (G'f) [left, yshift=-0.2cm, xshift=0.1cm] {} (SY);
\node at (4.5,1) {(ii)};
\node (a) at (6,3) {$MSA$};
\node (b) at (8.5,3) {$M_0A$};
\node (FSB) at (8.5,1) {$MSB$};
\node (c) at (6,-1) {$MSB$};
\node (d) at (8.5,-1) {$M_0B$.};
\draw[->] (a) to node[scale=.7] [above]{$\rho A$} (b);
\draw[->] (c) to node[scale=.7] [below]{$\rho B$} (d);
\draw[->] (a) to node[scale=.7] [left]{$Mf^\dagger$} (c);
\draw[->] (b) to node[scale=.7] [right]{$Mf$} (FSB);
\draw[->] (FSB) to node[scale=.7] [right]{$\rho B$} (d);
\end{tikzpicture}
\end{center}
We can endow $M_0$ with a relative right module structure defining its operator, for any 2-cell $f\colon IA\rightarrow TB$, as $f_m:=\rho B\cdot Mf$. The unit law is guaranteed by diagram (i) above, while to prove associativity it is enough to precompose the two composite in diagram (ii) with $Mf$. 

Moreover, let $(\alpha,\,\alpha_0)\colon (M,\,M_0,\,\rho)\rightarrow(M',\,M'_0,\,\rho')$ be a relative monad transformation. Then the diagram
\begin{center}
\begin{tikzpicture}
\node (a0) at (-0.7,2) {$M_0A$};
\node (a) at (1,2) {$MIA$};
\node (b) at (3,2) {$MSB$};
\node (c) at (5,2) {$M_0B$};
\node (a'0) at (-0.7,0) {$M'_0A$};
\node (a') at (1,0) {$M'IA$};
\node (b') at (3,0) {$M'SB$};
\node (c') at (5,0) {$M'_0B$};

\path[->] 
(a) edge node[scale=.7] [above]{$Mf$} (b)
	edge node[scale=.7] [left]{$\alpha IA$} (a')
(a0) edge node[scale=.7] [left]{$\alpha_0A$} (a'0)
(b) edge node[scale=.7] [above]{$\rho B$} (c)
	edge node[scale=.7] [left]{$\alpha SB$} (b')
(c) edge node[scale=.7] [right]{$\alpha_0 B$} (c')
(a') edge node[scale=.7] [below]{$M'f$} (b')
(b') edge node[scale=.7] [below]{$\rho' B$} (c')
;

\draw[double equal sign distance] (a0) to  node[scale=.7] {} (a);
\draw[double equal sign distance] (a'0) to  node[scale=.7] {} (a');
\end{tikzpicture}
\end{center} 
is commutative using $\alpha_0=\alpha I$, the naturality of $\alpha$ and the axiom for a relative monad transformation. Therefore $\alpha_0$ is a relative right module morphism.
\end{proof}

An object of $\Rel(\catK)[\,(X,\,I,\,T),\,(K,\,1_K,\,1_K)\,]$ is similar to the usual notion of a right module, having a straightforward right action $\rho\colon MT\rightarrow M_0$ satisfying axioms similar to the one of a right module. So, one could wonder if this would be the appropriate definition for a relative right module over a relative monad. The main problem with this definition is that, in a general 2-category $\catK$, is not possible to find a morphism from $\catK(X,K)$ to $\Rel(\catK)[\,(X,\,I,\,T),\,(K,\,1_K,\,1_K)\,]$. We need a map of this kind to form a diagram like (\ref{diag:rrm}), which will be crucial in the next section, where we will define a lifting to relative right modules. Instead, for any $(X,\,I,\,T)\in\Rel(\catK)$ we can consider the diagram 
\begin{equation}
\label{diag:rrm}
\begin{gathered}
\begin{tikzpicture}
\node (a) at (0,0) {$\catK(X,-)$};
\node (b) at (2,1.5) {$\RRM_T(-)$};
\node (c) at (4,0) {$\catK(X_0,-)$};

\path[->] 
(a) edge node[scale=.7] [left,xshift=-0.1cm,yshift=0.1cm]{$U^\ast_X$} (b)
	edge node[scale=.7] (I) [below]{$-\circ I$} (c)
(b) edge node[scale=.7] [right, xshift=0.1cm,yshift=0.1cm]{$J^\ast_{X_0}$} (c)
;

\draw[-{Implies},double distance=1.5pt,shorten >=10pt,shorten <=10pt] (I) to node[scale=.7] [right, xshift=0.1cm] {$t$} (b);

\end{tikzpicture}
\end{gathered}
\end{equation}
where $J_{X_0}^\ast\colon \RRM_T(-)\rightarrow\catK(X_0,\,-)$ is a forgetful natural transformation, sending a $K$-indexed relative right $T$-module to its underlying 1-cell $M\colon X_0\rightarrow K$, and  $U^\ast_X\colon \catK(X,\,-)\rightarrow\RRM_T(-)$ is a natural transformation defined as, for any $K\in\catK$, 
\begin{center}
$(U^\ast_X)_K\colon \catK(X,\,K)\longrightarrow\RRM_T(K)$ \\ \vspace{0.1cm}
\hspace{1.4cm}$(M\colon X\rightarrow K)\;\longmapsto (MT,\,M(-)^\dagger)$.
\end{center} 

\begin{rmk} 
\label{rmk:rel-adj-vs-yon}
Whilst using relative algebras we were able to construct a relative adjunction (Lemma~\ref{lemma:rel-EM-to-rel-mnd}), the diagram in (\ref{diag:rrm}) does not always represent one. However, even if we would have such a relative adjunction, then it would still not seem possible to prove a similar result to Theorem~\ref{thm:EM-obj}. Indeed, to define a relative adjunction in a 2-category we use operators, which are not self dual. Thus the contravariant Yoneda embedding $Y\colon \catK^\op\rightarrow[\catK,\,\Cat]$ does not preserve operators and so relative adjunctions, while the covariant Yoneda embedding $Y\colon \catK\rightarrow[\catK^\op,\,\Cat]$ does. Nevertheless, there are some 2-categories where we find a relative adjunction using Kleisli objects (for instance $\Cat$, see \cite[Section~2.3]{AltChap:mnd-no-end}). 
\end{rmk}


\subsection*{The 2-category $\RLift(\catK)$}
\addcontentsline{toc}{subsection}{The 2-category $\RLift(\catK)$}
\markboth{The 2-category $\RLift(\catK)$}{The 2-category $\RLift(\catK)$}

Given a 2-category $\catK$ we want to use the notion of relative right modules to define a 2-category $\RLift(\catK)$ with the same objects as $\Rel(\catK)$ but 1- and 2-cells defined as \textit{lifting to relative right modules}. Let us start fixing some notation. From now on we will consider two relative monads $(X,\,I,\,S)$ and $(Y,\,J,\,T)$ in $\catK$. They induce the following diagrams: 

\begin{center}
\begin{tikzpicture}
\node (a) at (0,0) {$\catK(X,-)$};
\node (b) at (2,1.5) {$\RRM_S(-)$};
\node (c) at (4,0) {$\catK(X_0,-)$,};

\path[->] 
(a) edge node[scale=.7] [left,xshift=-0.1cm,yshift=0.1cm]{$U^\ast_X$} (b)
	edge node[scale=.7] (I) [below]{$-\circ I$} (c)
(b) edge node[scale=.7] [right, xshift=0.1cm,yshift=0.1cm]{$J^\ast_{X_0}$} (c)
;

\draw[-{Implies},double distance=1.5pt,shorten >=10pt,shorten <=10pt] (I) to node[scale=.7] [right, xshift=0.1cm] {$s$} (b);

\node (a) at (7,0) {$\catK(Y,-)$};
\node (b) at (9,1.5) {$\RRM_T(-)$};
\node (c) at (11,0) {$\catK(Y_0,-)$.};

\path[->] 
(a) edge node[scale=.7] [left,xshift=-0.1cm,yshift=0.1cm]{$U^\ast_Y$} (b)
	edge node[scale=.7] (J) [below]{$-\circ J$} (c)
(b) edge node[scale=.7] [right, xshift=0.1cm,yshift=0.1cm]{$J^\ast_{Y_0}$} (c)
;

\draw[-{Implies},double distance=1.5pt,shorten >=10pt,shorten <=10pt] (J) to node[scale=.7] [right, xshift=0.1cm] {$t$} (b);
\end{tikzpicture}
\end{center}

\begin{defn}
\label{defn:lift-to-rrm}
Let $(X,\,I,\,S)$ and $(Y,\,J,\,T)$ be two relative monads in $\catK$. A \emph{lifting to relative right modules} consists of two 1-cells $F\colon X\rightarrow Y$ and $F_0\colon X_0\rightarrow Y_0$, a natural transformation $\tilde{F}\colon \RRM_T(-)\rightarrow\RRM_S(-)$ and a modification $\tilde{\phi}$ of the form 
\begin{center}
\begin{tikzpicture}
\node (a) at (0,0) {$\RRM_T(-)$};
\node (b) at (3,2) {$\catK(X,-)$};
\node (c) at (3,0) {$\RRM_S(-)$};
\node (d) at (0,2) {$\catK(Y,-)$};

\path[->] 
(d) edge node[scale=.7] (F) [above]{$-\circ F$} (b)
	edge node[scale=.7] [left,xshift=-0.1cm,yshift=0.1cm]{$U^\ast_Y$} (a)
(a) edge node[scale=.7] (I) [below]{$\tilde{F}$} (c)
(b) edge node[scale=.7] [right, xshift=0.1cm,yshift=0.1cm]{$U^\ast_X$} (c)
;

\draw[-{Implies},double distance=1.5pt,shorten >=20pt,shorten <=20pt] (F) to node[scale=.7] [right, xshift=0.1cm] {$\tilde{\phi}$} (I);
\end{tikzpicture}
\end{center}
satisfying the axioms: 
\begin{enumerate}[(i)]
\item $FI=JF_0$ and the following diagram commutes
\begin{center}
\begin{tikzpicture}
\node (a) at (0,0) {$\catK(Y_0,-)$};
\node (b) at (3,2) {$\RRM_S(-)$};
\node (c) at (3,0) {$\catK(X_0,-)$.};
\node (d) at (0,2) {$\RRM_T(-)$};

\path[->] 
(d) edge node[scale=.7] (F) [above]{$\tilde{F}$} (b)
	edge node[scale=.7] [left,xshift=-0.1cm,yshift=0.1cm]{$J^\ast_{Y_0}$} (a)
(a) edge node[scale=.7] (I) [below]{$-\circ F_0$} (c)
(b) edge node[scale=.7] [right, xshift=0.1cm,yshift=0.1cm]{$J^\ast_{X_0}$} (c)
;
\end{tikzpicture}
\end{center}

\item \label{ax:lift-unit} The following pasting diagrams are equal \\
\begin{tikzpicture}
\node (a') at (0,2) {$\catK(Y,-)$};
\node (b') at (2,3.5) {$\RRM_T(-)$};
\node (c') at (3,0) {$\catK(Y_0,-)$};
\node (b) at (6,3.5) {$\RRM_S(-)$};
\node (c) at (7,0) {$\catK(X_0,-)$};
\node at (9,1.75) {$=$};

\path[->] 
(b) edge node[scale=.7] [right, xshift=0.1cm,yshift=0.1cm]{$J^\ast_{X_0}$} (c)

(a') edge node[scale=.7] [left,xshift=-0.1cm,yshift=0.1cm]{$U^\ast_Y$} (b')
	edge node[scale=.7] (J) [left, xshift=-0.1cm, yshift=-0.2cm]{$-\circ J$} (c')
(b') edge node[scale=.7] [right, xshift=0.1cm,yshift=0.1cm]{$J^\ast_{Y_0}$} (c')
	edge node[scale=.7] [above]{$\tilde{F}$} (b)
(c')edge node[scale=.7] [below]{$-\circ F_0$} (c)
;


\draw[-{Implies},double distance=1.5pt,shorten >=20pt,shorten <=20pt] (J) to node[scale=.7] [right, xshift=0.1cm, yshift=-0.2cm] {$t$} (b');

\end{tikzpicture} \\
\begin{tikzpicture}
\node at (-3,0) {};
\node at (0,1.75) {$=$};
\node (a') at (2,2) {$\catK(Y,-)$};
\node (b') at (4,3.5) {$\RRM_T(-)$};
\node (c') at (5,0) {$\catK(Y_0,-)$};
\node (a) at (6,2) {$\catK(X,-)$};
\node (b) at (8,3.5) {$\RRM_S(-)$};
\node (c) at (9,0) {$\catK(X_0,-)$.};

\path[->] 
(a) edge node[scale=.7] [left,xshift=-0.1cm,yshift=0.1cm]{$U^\ast_X$} (b)
	edge node[scale=.7] (I) [left, xshift=-0.1cm, yshift=-0.2cm]{$-\circ I$} (c)
(b) edge node[scale=.7] [right, xshift=0.1cm,yshift=0.1cm]{$J^\ast_{X_0}$} (c)

(a') edge node[scale=.7] [left,xshift=-0.1cm,yshift=0.1cm]{$U^\ast_Y$} (b')
	edge node[scale=.7] (J) [left, xshift=-0.1cm, yshift=-0.2cm]{$-\circ J$} (c')
(b') edge node[scale=.7] (Ftilde) [above]{$\tilde{F}$} (b)
(c')edge node[scale=.7] [below]{$-\circ F_0$} (c)
(a')edge node[scale=.7] (F) [below]{$-\circ F$} (a)
;

\draw[-{Implies},double distance=1.5pt,shorten >=20pt,shorten <=20pt] (F) to node[scale=.7] [right, xshift=0.4cm, yshift=-0.1cm] {$\tilde{\phi}$} (Ftilde);

\draw[-{Implies},double distance=1.5pt,shorten >=20pt,shorten <=20pt] (I) to node[scale=.7] [right, xshift=0.2cm, yshift=-0.1cm] {$s$} (b);


\end{tikzpicture}

\item \label{ax:lift-kl-ext} Let us denote with $(-)_{\tilde{F}m}$ and $(-)_{\tilde{F}T}$ the relative right module structure operators of $\tilde{F}(M,\,(-)_m)$ and $\tilde{F}(T,\,(-)^\dagger)$ respectively. Then, for any 2-cell $f\colon IA\rightarrow TB$ and any $K$-indexed relative right module $(M,\,(-)_m)$, the action of $\tilde{F}$ has to be
\begin{center}
$f_{\tilde{F}m}=(f_{\tilde{F}T}\cdot tF_0A)_m$.
\end{center}
\end{enumerate}
\end{defn}

Since the last part of the definition above might seem a bit ad hoc, let us briefly explain where it comes from. The idea is that we want to write the action of $\tilde{F}$ on relative right module in terms of the \emph{free} relative right modules. In the case of relative algebras, this property followed from the other axioms (\Cref{prop:lift-on-struct}). This will make sure that in the definition a lifting of a monad to relative right modules (\Cref{defn:ext-to-kl}) all of the structure of the monad is lifted. 

The following Lemma gives us a nice way to describe the action of $\tilde{F}$ in terms of a particular 2-cell $\phi$. Thanks to this we can prove that a lifting to relative right modules is equivalent to a morphism of relative monads, as shown in \Cref{prop:lift-iff-rel-1-cells}.

\begin{lemma}
\label{lemma:act-of-lift}
Let $(F,\,F_0,\,\tilde{F},\,\tilde{\phi})$ be a lifting to relative right modules. Let us denote by $\phi\colon  FS\rightarrow TF_0$ the component of $J^\ast_{X_0}\tilde{\phi}$ relative to $1_Y$.
\begin{center}
\begin{tikzpicture}
\node (a) at (0,0) {$\RRM_T(Y)$};
\node (b) at (3,2) {$\catK(X,Y)$};
\node (c) at (3,0) {$\RRM_S(Y)$};
\node (d) at (0,2) {$\catK(Y,Y)$};

\path[->] 
(d) edge node[scale=.7] (F) [above]{$-\circ F$} (b)
	edge node[scale=.7] [left,xshift=-0.1cm,yshift=0.1cm]{$U^\ast_Y$} (a)
(a) edge node[scale=.7] (I) [below]{$\tilde{F}$} (c)
(b) edge node[scale=.7] [right, xshift=0.1cm,yshift=0.1cm]{$U^\ast_X$} (c)
;

\draw[-{Implies},double distance=1.5pt,shorten >=20pt,shorten <=20pt] (F) to node[scale=.7] [right, xshift=0.1cm] {$\tilde{\phi}$} (I);
\node (a') at (0,-2) {$\catK(Y_0,Y)$};
\node (c') at (3,-2) {$\catK(X_0,Y)$};

\path[->] 
(a) edge node[scale=.7] [left,xshift=-0.1cm,yshift=0.1cm]{$J^\ast_{Y_0}$} (a')
(a') edge node[scale=.7] (I) [below]{$-\circ F_0$} (c')
(c) edge node[scale=.7] [right, xshift=0.1cm,yshift=0.1cm]{$J^\ast_{X_0}$} (c')
;

\node (1) at (5.5,2) {$1_Y$};
\node (2) at (8,2) {$F$};
\node (3) at (8,-1) {$FS$};
\draw[|->] (1) edge node {} (2);
\draw[|->] (2) edge node {} (3);

\node (2') at (5.5,-2) {$T$};
\node (3') at (7,-2) {$TF_0$};
\draw[|->] (1) edge node {} (2');
\draw[|->] (2') edge node {} (3');

\draw[-{Implies},double distance=1.5pt] (3) to node[scale=.7] [right, yshift=-0.2cm, xshift=+0.1cm] {$\phi$} (3');

\end{tikzpicture}
\end{center}
Then, for any 2-cell $f\colon IA\rightarrow SB$, 
\begin{center}
$f_{\tilde{F}m}=(\phi B\cdot Ff)_m$.
\end{center}
\end{lemma}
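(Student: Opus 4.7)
The plan is to use axiom~(\ref{ax:lift-kl-ext}) of Definition~\ref{defn:lift-to-rrm} to reduce the identity to an equality of 2-cells not involving $(M,(-)_m)$, and then to derive that equality from the right module morphism property of $\tilde{\phi}_Y(1_Y)$ together with axiom~(\ref{ax:lift-unit}). The role of the right $T$-module $(M,(-)_m)$ is inessential until the very last step, where one applies its operator $(-)_m$.

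First, axiom~(\ref{ax:lift-kl-ext}) applied to $(M,(-)_m)$ and $f\colon IA\to SB$ yields $f_{\tilde{F}m}=(f_{\tilde{F}T}\cdot tF_0A)_m$. It therefore suffices to prove the equality of 2-cells $JF_0A\to TF_0B$
\[f_{\tilde{F}T}\cdot tF_0A=\phi B\cdot Ff,\]
since then applying the operator $(-)_m$ to both sides completes the proof. Note that the codomain matches because $FIA=JF_0A$ by the equation $FI=JF_0$ that is the first part of axiom~(i).

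To establish the displayed equality, observe that $\tilde{\phi}_Y(1_Y)$ is by construction a morphism in $\RRM_S(Y)$ from $U^\ast_X(F)=(FS,F(-)^\dagger_S)$ to $\tilde{F}(U^\ast_Y(1_Y))=(TF_0,(-)_{\tilde{F}T})$, and its underlying 2-cell is precisely $\phi$. Unwinding the relative right module morphism condition at the 2-cell $f$ gives $\phi B\cdot Ff^\dagger_S=f_{\tilde{F}T}\cdot\phi A$. In addition, axiom~(\ref{ax:lift-unit}) evaluated at $K=Y$ and $M=1_Y$ reads $tF_0=\phi\cdot Fs$, whence $tF_0A=\phi A\cdot FsA$.

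Combining these with the left unital law $f=f^\dagger_S\cdot sA$ for the relative monad $S$ gives
\[\phi B\cdot Ff=\phi B\cdot Ff^\dagger_S\cdot FsA=f_{\tilde{F}T}\cdot\phi A\cdot FsA=f_{\tilde{F}T}\cdot tF_0A,\]
which is the desired equality. The main piece of bookkeeping is correctly identifying $\tilde{\phi}_Y(1_Y)$ as a morphism between the two specific right $S$-modules above and extracting the module-morphism condition; once this is done the calculation is only a short chain of rewrites relying on axiom~(\ref{ax:lift-unit}) and the left unital law of $S$.
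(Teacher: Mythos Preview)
Your proof is correct and follows essentially the same approach as the paper's own proof: both use axiom~(\ref{ax:lift-kl-ext}) to reduce to showing $f_{\tilde{F}T}\cdot tF_0A=\phi B\cdot Ff$, and both derive this from the right $S$-module morphism property of $\phi$ (giving $\phi B\cdot Ff^\dagger_S=f_{\tilde{F}T}\cdot\phi A$), axiom~(\ref{ax:lift-unit}) (giving $tF_0A=\phi A\cdot FsA$), and the left unital law of $S$. The only difference is cosmetic: the paper packages the two ingredients as a commutative square before reading off the chain of equalities, while you write the chain directly.
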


\begin{proof}
We know that $\phi\colon FS\rightarrow TF_0$ is a map in $\RRM_S(X)$, where the structure operators of $FS$ and $TF_0$ are respectively $F(-)^\dagger_S$ and $(-)_{\tilde{F}T}$. Therefore, the following diagram is commutative
\begin{center}
\begin{tikzpicture}
\node (S2T) at (-0.5,4) {$FIA$};
\node (s2t') at (2,4) {$JF_0A$};
\node (STS0) at (-0.5,2) {$FSA$};
\node (TS02) at (-0.5,0) {$FSB$};
\node (ST) at (2,2) {$TF_0A$};
\node (TS0) at (2,0) {$TF_0B$.};
\draw[double equal sign distance]  (S2T) to  node[scale=.7] (G'f) [left, yshift=-0.2cm, xshift=0.1cm] {} (s2t');
\path[->] 
(S2T) edge node[scale=.7] [left] {$FsA$} (STS0)
(ST) edge node[scale=.7] [right] {$f_{\tilde{F}T}$} (TS0)
(STS0) edge node[scale=.7] [left] {$Ff^\dagger_S$} (TS02)
	edge node[scale=.7] [above] {$\phi A$} (ST)
(TS02) edge node[scale=.7] [below] {$\phi B$} (TS0);
\draw[->] (s2t') edge node[scale=.7]  [right] {$tF_0A$} (ST);
\end{tikzpicture}
\end{center}
Indeed, the top square commutes by part (\ref{ax:lift-unit}) of Definition~\ref{defn:lift-to-rrm} and the bottom square because $\phi$ is a relative right module map. Thus, we can deduce
\begin{align*}
f_{\tilde{F}m}
&  = (f_{\tilde{F}T}\cdot tF_0A)_m
& (\textrm{by part (\ref{ax:lift-kl-ext}) of Definition\til\ref{defn:lift-to-rrm}}) &
\\
& = (\phi B\cdot Ff^\dagger_S\cdot FsA)_m
& (\textrm{diagram above}) &
\\
& = (\phi B\cdot Ff)_m
& (\textrm{by left unit law of}\;S). & \qedhere
\end{align*}  \end{proof} 

\begin{prop}
\label{prop:lift-iff-rel-1-cells}
Let $(X,\,I,\,S)$ and $(Y,\,J,\,T)$ be two relative monads in $\catK$. Then a lifting to relative right modules is equivalent to a relative monad morphism between them. 
\end{prop}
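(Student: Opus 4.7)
The plan is to set up a bijection between relative monad morphisms $(F,F_0,\phi)\colon(X,I,S)\to(Y,J,T)$ and liftings $(F,F_0,\tilde F,\tilde\phi)$ that share the same underlying 1-cells. In one direction, given a lifting, I take $\phi\colon FS\to TF_0$ to be the 2-cell of Lemma~\ref{lemma:act-of-lift}, namely the $1_Y$-component of $J^\ast_{X_0}\tilde\phi$. In the other direction, given a morphism $(F,F_0,\phi)$, I define the natural transformation $\tilde F$ on a $K$-indexed relative right $T$-module $(M,(-)_m)$ by
\[
\tilde F(M,(-)_m):=\bigl(MF_0,\;f\mapsto(\phi B\cdot Ff)_m\bigr),\qquad f\colon IA\to SB,
\]
on morphisms $g$ by $\tilde F(g):=gF_0$, and I define the modification by $\tilde\phi_M:=M\phi$.

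The first thing I would check is that the data produced from a morphism is well-defined. That $\tilde F(M,(-)_m)$ is a relative right $S$-module follows from the unit and extension laws of $\phi$ together with the corresponding laws for $(-)_m$: for instance, the unit axiom reduces to $(\phi\cdot Fs)_m=(tF_0)_m=1_{MF_0}$ via the unit law of $\phi$, indexing naturality, and the unit law of $(-)_m$; associativity unfolds similarly. Preservation of module morphisms and naturality of $\tilde F$ in $K$ are formal, using the action of $\RRM_T(-)$ on 1-cells (Remark~\ref{rmk:pst-op}). That each $\tilde\phi_M=M\phi$ is an $S$-module morphism from $U^\ast_X(MF)$ to $\tilde F(U^\ast_YM)$ is just $M$ applied to the extension law of $\phi$, and naturality in $M$ is immediate, so $\tilde\phi$ is a genuine modification. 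The three axioms of a lifting then hold: axiom~(i) is transparent, axiom~(ii) at components is the unit law of $\phi$, and axiom~(iii) reduces via the left unit law of $T$ to the identity $(f_{\tilde F T}\cdot tF_0A)_m=(\phi B\cdot Ff)_m=f_{\tilde F m}$.

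For the reverse direction, I would verify that the $\phi$ extracted from a lifting is a relative monad morphism. The equality $FI=JF_0$ is part of axiom~(i); the unit law $\phi\cdot Fs=tF_0$ is the $1_Y$-component of axiom~(ii) projected by $J^\ast_{X_0}$; and the extension law follows from combining the fact that $\tilde\phi_{1_Y}$ is a morphism in $\RRM_S(Y)$ from $(FS,F(-)^\dagger_S)$ to $\tilde F(T,(-)^\dagger_T)$ with axiom~(iii) applied to the free $T$-module $(T,(-)^\dagger_T)$, which forces $f_{\tilde F T}=(\phi B\cdot Ff)^\dagger_T$. The two constructions are then mutually inverse: the round-trip starting from a morphism is the identity because the $1_Y$-component of $M\phi$ is $\phi$; the round-trip starting from a lifting is the identity because Lemma~\ref{lemma:act-of-lift} recovers the operator of $\tilde F(M,(-)_m)$ exactly as $f\mapsto(\phi B\cdot Ff)_m$, and each component $\tilde\phi_M$ is forced to equal $M\phi$ by the modification structure, since all its components are determined by $\tilde\phi_{1_Y}=\phi$.

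The main obstacle is clarifying how axiom~(iii) in the definition of a lifting enables the bridge between the two structures: it pins down the operator of $\tilde F(M,(-)_m)$ as induced from that of the free module $\tilde F(T,(-)^\dagger_T)$, which is itself forced to be $(\phi B\cdot Ff)^\dagger_T$ by the module-morphism property of $\tilde\phi_{1_Y}$ together with the left unit law of $T$. Without this axiom, the extension law of a relative monad morphism could not be recovered from the module-morphism property of $\tilde\phi_{1_Y}$, and uniqueness of the reconstruction would fail.
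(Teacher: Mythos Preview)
Your proposal is correct and follows essentially the same approach as the paper: both directions of the bijection are constructed identically, the verifications invoke the same ingredients (unit and extension laws of $\phi$, Lemma~\ref{lemma:act-of-lift}, axiom~(\ref{ax:lift-kl-ext})), and the inverse check rests on Lemma~\ref{lemma:act-of-lift}. If anything, you are slightly more explicit than the paper in arguing that the round-trip starting from a lifting returns the same $\tilde\phi$, by invoking the modification axiom to force $\tilde\phi_M=M\phi$ from $\tilde\phi_{1_Y}=\phi$; the paper leaves this implicit in the sentence ``Lemma~\ref{lemma:act-of-lift} guarantees that these constructions are inverses of each other.''
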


\begin{proof}
We will start proving that given a lifting to relative right modules we get a relative monad morphism. Let us denote with $\phi\colon  FS\rightarrow TF$ the 2-cell given in Lemma~\ref{lemma:act-of-lift}. Axiom (\ref{ax:lift-unit}) of Definition~\ref{defn:lift-to-rrm} is equivalent to the unit law for $(F,\,F_0,\,\phi)$ seen as a relative monad morphism. Moreover, the extension law for it follows from the fact that the components of $\tilde{\phi}$ are relative right modules morphisms and axioms (\ref{ax:lift-unit}) and (\ref{ax:lift-kl-ext}) of Definition~\ref{defn:lift-to-rrm}. More precisely, we have to prove that for any 2-cell $f\colon IA\rightarrow SB$ the following diagram is commutative
\begin{center}
\begin{tikzpicture}
\node (T) at (0,2) {$FSA$};
\node (T') at (0,0) {$FSB$};
\node (ST') at (2.2,2) {$TF_0A$};
\node (TS0') at (2.2,0) {$TF_0B$.};
\path[->] 
(ST') edge node[scale=.7] [right] {$(\phi B\cdot Sf)^\dagger_T$} (TS0')
(T) edge node[scale=.7] [above] {$\phi A$} (ST')
	edge node[scale=.7] [left] {$Ff^\dagger_S$} (T')
(T') edge node[scale=.7] [below] {$\phi B$} (TS0');
\end{tikzpicture}
\end{center}
We know that $\phi\colon FS\rightarrow TF_0$ is a map in $\RRM_S(X)$, therefore for any $f\colon IA\rightarrow SB$ we have, using that the structure operators of $FS$ and $TF_0$ are respectively $Ff^\dagger_S$ and $f_{\tilde{F}T}$,
\begin{center}
\begin{tikzpicture}
\node (T) at (0,2) {$FSA$};
\node (T') at (0,0) {$FSB$};
\node (ST') at (2.2,2) {$TF_0A$};
\node (TS0') at (2.2,0) {$TF_0B$.};
\path[->] 
(ST') edge node[scale=.7] [right] {$f_{\tilde{F}T}$} (TS0')
(T) edge node[scale=.7] [above] {$\phi A$} (ST')
	edge node[scale=.7] [left] {$Ff^\dagger_S$} (T')
(T') edge node[scale=.7] [below] {$\phi B$} (TS0');
\end{tikzpicture}
\end{center}
Hence, we just need to prove that $f_{\tilde{F}T}=(\phi B\cdot Ff)^\dagger_T$, which is just a particular instance of Lemma~\ref{lemma:act-of-lift}. 

On the other hand if we start with a relative monad morphism $(F,\,F_0,\,\phi)$, we can define $\tilde{F}\colon \RRM_T(-)\rightarrow\RRM_S(-)$, for any $K\in\catK$ and $(M,\,(-)_m)\in\RRM_T(K)$, as
\begin{center}
$\tilde{F}(M,\,(-)_m):=(MF_0,\,(\phi B\cdot F-)_m)$.
\end{center}
First of all we need to prove that $(\phi B\cdot F-)_m$ is a relative right module operator. 
\begin{align*}
\textrm{Unit Law:} & & \\
(\phi B\cdot Fs)_m
&  = (tF_0)_m
& (\textrm{by part (\ref{ax:lift-unit}) of Definition\til\ref{defn:lift-to-rrm}}) 
\\
& = 1_{MF_0}
& (\textrm{by unit law for}\;(-)_m).
\\
\textrm{Associativity:} & & \\
(k^\dagger\cdot h)_{\tilde{S}m}
&  = (\,\phi C\cdot Fk^\dagger_S\cdot Fh)\,)_m
& (\textrm{by definition}) 
\\
& = (\,(\phi C\cdot Fk)^\dagger_T\cdot \phi B\cdot Fh)\,)_m
& (\textrm{by Kleisli ext law for}\;\phi)
\\
& = (\phi C\cdot Fk)_m\cdot (\phi B\cdot Fh)_m
& (\textrm{by associativity of}\;(-)_m)
\\
& = k_{\tilde{F}m}\cdot h_{\tilde{F}m}
& (\textrm{by definition}).
\end{align*} 
Moreover if $g\colon (M,\,(-)_m)\rightarrow(N,\,(-)_n)$ is a map in $\RRM_T(K)$, then applying the axiom for $g$ to $\phi B\cdot Ff$ we get that $gF_0\colon \tilde{F}(M,\,(-)_m)\rightarrow\tilde{F}(N,\,(-)_n)$ is in $\RRM_S(K)$. Therefore $\tilde{F}\colon \RRM_T(-)\rightarrow\RRM_T(-)$ is well defined. 

Then, we can define the component of $\tilde{\phi}$ at $K\in\catK$ and $(M,\,(-)_m)\in\RRM_T(K)$ as~$M\phi$. Looking at the definition of $\tilde{F}$ on relative right actions, we can see that the axiom for $M\phi$ to be a relative right module morphism is the same as the extension axiom for $(F,\,F_0,\,\phi)$. 

Now we need to prove all the axioms of a lifting to relative right modules. The first one follows from definition, and part (\ref{ax:lift-unit}) is equivalent to the unit one for a relative monad morphism. Finally, we can easily check that part (\ref{ax:lift-kl-ext}) of Definition~\ref{defn:lift-to-rrm} is satisfied, as $f_{\tilde{F}m}=(\phi B\cdot Ff)_m$ and $f_{\tilde{F}T}=(\phi B\cdot Sf)^\dagger$ by definition, and so
\begin{center}
$f_{\tilde{F}m}=(\phi B\cdot Ff)_m=(\,(\phi B\cdot Ff)^\dagger\cdot tF_0A\,)_m=(f_{\tilde{F}T}\cdot tF_0A)_m$.
\end{center}
Lemma~\ref{lemma:act-of-lift} guarantees that these constructions are inverses of each other. 
\end{proof}

\begin{defn}
\label{defn:map-lift-rrm}
Let $(F,\,F_0,\,\tilde{F},\,\tilde{\phi})$ and $(F',\,F'_0,\,\tilde{F'},\,\tilde{\phi'})$ be two liftings to relative right modules from $(X,\,I,\,S)$ to $(Y,\,J,\,T)$, two relative monads in $\catK$. A \emph{map of liftings to relative right modules} $(p,\,p_0,\,\tilde{p})\colon (F,\,F_0,\,\tilde{F},\,\tilde{\phi})\rightarrow(F',\,F'_0,\,\tilde{F'},\,\tilde{\phi'})$ consists of two 2-cells\til$p\colon F\rightarrow F'$ and $p_0\colon F_0\rightarrow F'_0$ and a modification\til$\tilde{p}\colon \tilde{F}\rightarrow\tilde{F'}$ such that:
\begin{enumerate}[(i)]
\item $Jp_0=pI$ and the following pasting diagrams are equal
\begin{center}
\begin{tikzpicture}
\node (QGF) at (0,6) {$\RRM_T(-)$}; 
\node (QG'F) at (4,6) {$\RRM_S(-)$};
\node (GTF) at (0,3) {$\catK(Y_0,-)$};
\node (G'TF) at (4,3) {$\catK(X_0,-)$};

\path[->]
(QGF) edge [bend left=20] node[scale=.7] (Qq'F) [above] {$\tilde{F}$} (QG'F)
	edge node[scale=.7] [left] {$J^\ast_{Y_0}$} (GTF)
	edge [bend right=20] node[scale=.7] (QqF) [below] {$\tilde{F'}$} (QG'F)
(QG'F) edge node[scale=.7] [right] {$J^\ast_{X_0}$} (G'TF)
(GTF) edge [bend right=20] node[scale=.7] (q'TF) [below] {$\tilde{F'}$} (G'TF);

\draw[-{Implies},double distance=1.5pt,shorten >=10pt,shorten <=10pt] (Qq'F) to node[scale=.7] [right,xshift=0.2cm] {$\tilde{p}$} (QqF);

\node at (5.5,4.5) {$=$};

\node (GTFR) at (7,6) {$\RRM_T(-)$};
\node (G'TFR) at (11,6) {$\RRM_S(-)$};
\node (GFSR) at (7,3) {$\catK(Y_0,-)$};
\node (G'FSR) at (11,3) {$\catK(X_0,-)$;};

\path[->]
(GTFR) edge [bend left=20] node[scale=.7] (q'TFR) [above] {$\tilde{F}$} (G'TFR)
	edge node[scale=.7][left] {$J^\ast_{Y_0}$} (GFSR)
(G'TFR) edge node[scale=.7] [right] {$J^\ast_{X_0}$} (G'FSR)
(GFSR) edge [bend right=20] node[scale=.7] (qFSR) [below] {$-\circ F'_0$} (G'FSR)
	edge [bend left=20] node[scale=.7] (q'FSR) [above] {$-\circ F_0$} (G'FSR);

\draw[-{Implies},double distance=1.5pt,shorten >=10pt,shorten <=10pt] (q'FSR) to node[scale=.7] [right, xshift=0.2cm] {$-\circ p_0$} (qFSR);
\end{tikzpicture}
\end{center}
\newpage
\item \label{ax:map-lift} the following pasting diagrams are equal 
\begin{center}
\begin{tikzpicture}
\node (QGF) at (0,6) {$\catK(Y,-)$}; 
\node (QG'F) at (4,6) {$\catK(X,-)$};
\node (GTF) at (0,3) {$\RRM_T(-)$};
\node (G'TF) at (4,3) {$\RRM_S(-)$};

\path[->]
(QGF) edge [bend left=20] node[scale=.7] (Qq'F) [above] {$-\circ F$} (QG'F)
	edge node[scale=.7] [left] {$U^\ast_Y$} (GTF)
	edge [bend right=20] node[scale=.7] (QqF) [below] {$-\circ F'$} (QG'F)
(QG'F) edge node[scale=.7] [right] {$U^\ast_X$} (G'TF)
(GTF) edge [bend right=20] node[scale=.7] (q'TF) [below] {$\tilde{F'}$} (G'TF);

\draw[-{Implies},double distance=1.5pt,shorten >=10pt,shorten <=10pt] (Qq'F) to node[scale=.7] [right] {$-\circ p$} (QqF);
\draw[-{Implies},double distance=1.5pt,shorten >=25pt,shorten <=25pt] (QqF) to node[scale=.7] [right,xshift=0.2cm] {$\tilde{\phi'}$} (q'TF);

\node at (5.5,4.5) {$=$};

\node (GTFR) at (7,6) {$\catK(Y,-)$};
\node (G'TFR) at (11,6) {$\catK(X,-)$};
\node (GFSR) at (7,3) {$\RRM_T(-)$};
\node (G'FSR) at (11,3) {$\RRM_S(-)$.};

\path[->]
(GTFR) edge [bend left=20] node[scale=.7] (q'TFR) [above] {$-\circ F$} (G'TFR)
	edge node[scale=.7][left] {$U^\ast_Y$} (GFSR)
(G'TFR) edge node[scale=.7] [right] {$U^\ast_X$} (G'FSR)
(GFSR) edge [bend right=20] node[scale=.7] (qFSR) [below] {$\tilde{F'}$} (G'FSR)
	edge [bend left=20] node[scale=.7] (q'FSR) [above] {$\tilde{F}$} (G'FSR);

\draw[-{Implies},double distance=1.5pt,shorten >=10pt,shorten <=10pt] (q'FSR) to node[scale=.7] [right, xshift=0.2cm] {$\tilde{p}$} (qFSR);
\draw[-{Implies},double distance=1.5pt,shorten >=25pt,shorten <=25pt] (q'TFR) to node[scale=.7] [right, xshift=0.2cm] {$\tilde{\phi}$} (q'FSR);
\end{tikzpicture}
\end{center}
\end{enumerate}
\end{defn}

\begin{prop}
\label{prop:lift-iff-rel-2-cells}
Let $(F,\,F_0,\,\tilde{F},\,\tilde{\phi})$ and $(F',\,F'_0,\,\tilde{F'},\,\tilde{\phi'})$ be two liftings to relative right modules and $(F,\,F_0,\,\phi)$ and $(F',\,F'_0,\,\phi)$ their corresponding relative monad morphisms (using Proposition~\ref{prop:lift-iff-rel-1-cells}). A map of liftings to relative right modules between them is equivalent to a relative monad transformation between the corresponding relative monad maps. 
\end{prop}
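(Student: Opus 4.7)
The plan is to exploit \Cref{prop:lift-iff-rel-1-cells} and mirror its proof at the level of 2-cells, showing that the data $(p,p_0)$ of a relative monad transformation and the data $(p,p_0,\tilde{p})$ of a map of liftings to relative right modules determine each other, with the modification $\tilde{p}$ completely recoverable from $p_0$ via the explicit description of $\tilde{F}$ given in the proof of \Cref{prop:lift-iff-rel-1-cells}.

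For the forward direction (\emph{map of liftings} $\Rightarrow$ \emph{relative monad transformation}), I would simply take $p\colon F\to F'$ and $p_0\colon F_0\to F'_0$ from the data of the map of liftings. The equality $Jp_0=pI$ is half of axiom (i) of \Cref{defn:map-lift-rrm}. The compatibility $\phi'\cdot pS=Tp_0\cdot\phi$ required for a relative monad transformation is obtained by evaluating the pasting equality (ii) of \Cref{defn:map-lift-rrm} at $K=Y$ and $M=1_Y\in\catK(Y,Y)$, then applying the forgetful natural transformation $J^\ast_{X_0}$; by the same argument used to extract $\phi$ in \Cref{lemma:act-of-lift}, the left-hand pasting evaluates to $\phi'\cdot pS$ and the right-hand pasting to $Tp_0\cdot\phi$.

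For the reverse direction (\emph{relative monad transformation} $\Rightarrow$ \emph{map of liftings}), recall from \Cref{prop:lift-iff-rel-1-cells} that $\tilde{F}(M,(-)_m)=(MF_0,(\phi B\cdot F-)_m)$ and similarly for $\tilde{F'}$. Given $(p,p_0)$, I would define the component of $\tilde{p}$ at $K\in\catK$ and $(M,(-)_m)\in\RRM_T(K)$ to be the 2-cell $Mp_0\colon MF_0\to MF'_0$. The main verification is that $Mp_0$ is a morphism in $\RRM_S(K)$: for any $h\colon IA\to SB$, one must check $(\phi' B\cdot F'h)_m\cdot Mp_0A=Mp_0B\cdot(\phi B\cdot Fh)_m$, which after using left naturality of $(-)_m$ reduces to showing $\phi' B\cdot F'h\cdot Jp_0A=Tp_0B\cdot\phi B\cdot Fh$; this follows by combining the monad transformation axiom $\phi'\cdot pS=Tp_0\cdot\phi$ with the naturality of $p$ together with $Jp_0=pI$. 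Modification-naturality of $\tilde{p}$ (in both $K$ and $(M,(-)_m)$) is then routine; axiom (i) of \Cref{defn:map-lift-rrm} is immediate from $Jp_0=pI$ and the definition of $\tilde{p}$; and axiom (ii) unfolds, upon restricting to the generators as in \Cref{lemma:act-of-lift}, to the same equation $\phi'\cdot pS=Tp_0\cdot\phi$.

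Finally, to see that these two assignments are mutually inverse, I would use a variant of \Cref{lemma:act-of-lift}: the modification $\tilde{p}$ is determined by its underlying behaviour $p_0$ on the $X_0$-component because, on an arbitrary $(M,(-)_m)$, the morphism $\tilde{p}_{K,(M,(-)_m)}$ is forced to be $Mp_0$ by the axiom $J^\ast_{X_0}\tilde{p}=-\circ p_0$ together with the fact that the right module structure on $\tilde{F}(M,(-)_m)$ is entirely encoded by $M$ and $\phi$. The expected main obstacle is the verification that $Mp_0$ respects the right module structures on $\tilde{F}(M)$ and $\tilde{F'}(M)$, since that is the only nontrivial step where the relative monad transformation axiom is genuinely used; the rest is bookkeeping.
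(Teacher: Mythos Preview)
Your proposal is correct and follows essentially the same approach as the paper. One small imprecision worth noting: in the reverse direction you invoke only \emph{left} naturality of $(-)_m$ to reduce the module-morphism condition to the equation $\phi' B\cdot F'h\cdot Jp_0A=Tp_0B\cdot\phi B\cdot Fh$, but in fact both left and right naturality are needed---right naturality to absorb $Mp_0B$ as $Tp_0B$ and left naturality to absorb $Mp_0A$ as $Jp_0A$---exactly as in the paper's displayed chain of equalities.
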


\begin{proof}
Given $(p,\,p_0,\,\tilde{p})$ we can see that $(p,\,p_0)$ is a relative monad transformation. This follows from $J^\ast_{X_0}\tilde{p}=(-\circ p_0)J^\ast_{Y_0}$ and part~(\ref{ax:map-lift}) of Definition~\ref{defn:map-lift-rrm} applied to~$1_Y$. On the other hand, given a relative monad transformation $\tilde{p}$, to satisfy the first axiom of map of liftings to relative right modules, we need to choose $\tilde{p}$ as follows: for any $K\in\catK$ and $(M,\,(-)_M)\in\RRM_T(K)$, then $\tilde{p}_{K, M}\colon \tilde{F}(M,\,(-)_m)\rightarrow$~$\tilde{F'}(M,\,(-)_m)$ is defined as $Mp_0\colon (MF_0,\,(-)_{\tilde{F}m})\rightarrow(MF'_0,\,(-)_{\tilde{F'}m})$. Thanks to Proposition~\ref{prop:lift-iff-rel-1-cells} we know that for any 2-cells $f\colon IA\rightarrow SA$, $f_{\tilde{F}m}=(\phi B\cdot Ff)_m$ and $f_{\tilde{F'}m}=(\phi' B\cdot F'f)_m$. Therefore, to prove that $\tilde{p}_{K, M}$ is a map of relative right modules, it suffices to prove the following equality:
\begin{align*}
Mp_0B\cdot(\phi B\cdot Ff)_m
&  = (Tp_0B\phi B\cdot Ff)_m
& (\textrm{by right naturality of}\;(-)_m) 
\\
& = (\phi' B\cdot pSB \cdot Ff)_m
& (\textrm{because}\;(p,p_0)\in\Rel(\catK))
\\
& = (\phi' B\cdot F'f\cdot pIA)_m
& (\textrm{by naturality of}\;p)
\\
& = (\phi' B\cdot F'f\cdot Jp_0A)_m
& (\textrm{because}\;(p,p_0)\in\Rel(\catK))
\\
& = (\phi' B\cdot F'f)_m\cdot Mp_0A
& (\textrm{by left naturality of}\;(-)_m).
\end{align*} 
These constructions are clearly inverses of each other. 
\end{proof}

\begin{prop}
Let $\catK$ be a 2-category. Then there exists a 2-category $\RLift(\catK)$ of lifting to relative right modules with objects relative monads in $\catK$, 1-cells liftings to relative right modules and 2-cells maps between them. 
\end{prop}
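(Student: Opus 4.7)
The plan is to transport the 2-category structure from $\Rel(\catK)$ to $\RLift(\catK)$ using the bijections established in \Cref{prop:lift-iff-rel-1-cells} and \Cref{prop:lift-iff-rel-2-cells}. These propositions exhibit bijections between liftings to relative right modules and relative monad morphisms, and between maps of liftings and relative monad transformations, with the object-level correspondence being the identity on relative monads. Hence it suffices to define identities and compositions in $\RLift(\catK)$ so that they match those of $\Rel(\catK)$ under these bijections; the 2-category axioms then follow automatically from the fact that $\Rel(\catK)$ is a 2-category.

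Concretely, for a relative monad $(X,I,T)$ the identity lifting would be taken to correspond under \Cref{prop:lift-iff-rel-1-cells} to the identity relative monad morphism $(1_X,1_{X_0},1_T)$; its underlying data are $\tilde{1}=1_{\RRM_T(-)}$ with modification given componentwise by identities. For composition of $(F,F_0,\tilde F,\tilde\phi)\colon (X,I,S)\to(Y,J,T)$ and $(G,G_0,\tilde G,\tilde\psi)\colon (Y,J,T)\to(Z,K,U)$, I would declare the composite to have underlying 1-cells $GF$ and $G_0F_0$, underlying natural transformation $\tilde F\tilde G\colon \RRM_U(-)\to\RRM_S(-)$, and composite modification obtained by pasting $\tilde\psi$ and $\tilde\phi$, corresponding under the bijection to the composite relative monad morphism with 2-cell $G\phi\cdot\psi F_0 \colon GFS\to UG_0F_0$. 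Identity 2-cells, vertical composition, and horizontal composition of maps of liftings are defined analogously via \Cref{prop:lift-iff-rel-2-cells}.

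Because these bijections are the identity on 0-cells and preserve (by construction) the compositions just described, the 2-category axioms for $\RLift(\catK)$ — associativity and unitality of horizontal composition, composition of 2-cells into hom-categories, and the interchange law — are inherited directly from the corresponding axioms in $\Rel(\catK)$. The only nonformal verification required is that the composition of liftings as defined above really produces a quadruple satisfying the three axioms of \Cref{defn:lift-to-rrm}. The first axiom is immediate from strict functoriality; axioms (ii) and (iii) follow from the fact that $\tilde\phi$ and $\tilde\psi$ satisfy them individually together with naturality of the forgetful and free transformations, an essentially diagrammatic check with no new content beyond what has already been done for \Cref{prop:lift-iff-rel-1-cells}. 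This last step is the only mild obstacle, but it poses no conceptual difficulty.
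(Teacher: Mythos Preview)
Your approach is correct but takes a different route from the paper. The paper gives an intrinsic proof: composition in $\RLift(\catK)$ is defined directly as composition in $\catK$ together with pasting of the relevant diagrams, and the 2-category axioms (in particular strict associativity) are deduced from the Pasting Theorem for 2-categories. Only afterwards, in \Cref{thm:rel-eq-rlift}, are the bijections of \Cref{prop:lift-iff-rel-1-cells} and \Cref{prop:lift-iff-rel-2-cells} assembled into a 2-isomorphism.

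You instead transport the 2-category structure along those bijections from $\Rel(\catK)$. This is perfectly legitimate and has the advantage that the present proposition and \Cref{thm:rel-eq-rlift} are obtained in one stroke, with no separate appeal to the Pasting Theorem. One remark: the ``only nonformal verification'' you flag at the end is in fact automatic under pure transport of structure. If you declare the composite lifting to be the lifting corresponding (via \Cref{prop:lift-iff-rel-1-cells}) to the composite relative monad morphism, then it is a lifting by construction; the check you describe is only needed if you insist on showing that this transported composite coincides with the concrete pasting-based description, which is a compatibility statement rather than a well-definedness one. The paper's intrinsic route, by contrast, makes $\RLift(\catK)$ stand on its own without reference to $\Rel(\catK)$, at the cost of invoking the Pasting Theorem.
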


\begin{proof}
The composition is given by composition in $\catK$ and pasting the appropriate diagrams. The strictness of this operation follows from the strictness in $\catK$ and the Pasting Theorem for 2-categories \cite{AJPow90}.
\end{proof}

Starting from the formal theory of monads \cite{StreetR:fortm}, one can prove that the 2-category of monads in a 2-category is equivalent to the 2-category of liftings to indexed algebras. The following Theorem provides a similar result in the setting of relative monads, which will be useful to prove \Cref{thm:rel-distr-eq-kl}. 

\begin{theorem}
\label{thm:rel-eq-rlift}
Let $\catK$ be a 2-category. Then the 2-categories $\Rel(\catK)$ and $\RLift(\catK)$ are 2-isomorphic. 
\end{theorem}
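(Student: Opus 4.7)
The plan is to leverage the bijective correspondences already established in Propositions~\ref{prop:lift-iff-rel-1-cells} and~\ref{prop:lift-iff-rel-2-cells} and promote them to a strict 2-functor that is the identity on objects. Since both 2-categories have the same class of 0-cells (relative monads in $\catK$), I will define a 2-functor $\Phi\colon\RLift(\catK)\to\Rel(\catK)$ sending:
\begin{itemize}
\item every object $(X,I,S)$ to itself;
\item a lifting $(F,F_0,\tilde{F},\tilde{\phi})$ to the relative monad morphism $(F,F_0,\phi)$ where $\phi\colon FS\to TF_0$ is the component of $J^\ast_{X_0}\tilde{\phi}$ at $1_Y$, as in Lemma~\ref{lemma:act-of-lift};
\item a map of liftings $(p,p_0,\tilde{p})$ to the relative monad transformation $(p,p_0)$, as in Proposition~\ref{prop:lift-iff-rel-2-cells}.
\end{itemize}
The two propositions cited above show that $\Phi$ is a bijection on hom-sets of hom-categories, so once $\Phi$ is shown to be a strict 2-functor, it will automatically be a 2-isomorphism.

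The key work is functoriality. First I will verify that $\Phi$ preserves identities: the identity lifting on $(X,I,S)$ has $F=1_X$, $F_0=1_{X_0}$, $\tilde{F}=\mathrm{id}$ and $\tilde{\phi}=\mathrm{id}$, whence the extracted 2-cell $\phi$ equals $1_{S}$, giving the identity relative monad morphism. For vertical composition of 2-cells, $\Phi$ just forgets the modification component $\tilde{p}$, and since vertical pasting of the respective diagrams in $\RLift(\catK)$ translates directly to the vertical composition of the pairs $(p,p_0)$ in $\Rel(\catK)$, preservation is immediate. For horizontal composition of 1-cells, given liftings $(F,F_0,\tilde{F},\tilde{\phi})$ and $(G,G_0,\tilde{G},\tilde{\psi})$, I will compute the 2-cell extracted from the composite lifting $(GF,G_0F_0,\tilde{F}\tilde{G},\tilde{F}\tilde{\psi}\cdot\tilde{\phi}(-\circ G))$ and show it equals $G\phi\cdot\psi F_0$, which is exactly the composition formula for relative monad morphisms in $\Rel(\catK)$. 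Using Lemma~\ref{lemma:act-of-lift} one sees that the relative right module action transported along the composite lifting coincides with the action built from $(GF,G_0F_0,G\phi\cdot\psi F_0)$ via Proposition~\ref{prop:lift-iff-rel-1-cells}, so the two associations agree.

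The main obstacle will be the horizontal composition check, specifically keeping track of whiskerings: the modification $\tilde{\phi}$ for the composite lifting involves both $\tilde{F}\tilde{\psi}$ (whiskering $\tilde{\psi}$ by $\tilde{F}$) and the original $\tilde{\phi}$ pasted with $-\circ G$, and one must verify that the naturality square obtained by evaluating at $1$ in the appropriate hom-category reproduces exactly $G\phi\cdot\psi F_0\colon GFS\to HG_0F_0$. Horizontal composition of maps of liftings and the middle-four-interchange law in $\RLift(\catK)$ similarly reduce to interchange in $\catK$ by the same pointwise evaluation, appealing to the Pasting Theorem for 2-categories cited in the preceding proposition. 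Once these calculations are laid out, strict inverse functoriality is automatic from the inverse constructions given in the proofs of Propositions~\ref{prop:lift-iff-rel-1-cells} and~\ref{prop:lift-iff-rel-2-cells}, completing the 2-isomorphism.
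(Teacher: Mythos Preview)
Your proposal is correct and follows essentially the same approach as the paper: both build an identity-on-objects 2-functor between $\Rel(\catK)$ and $\RLift(\catK)$ using the bijections of Propositions~\ref{prop:lift-iff-rel-1-cells} and~\ref{prop:lift-iff-rel-2-cells}, and conclude that an isomorphism on hom-categories yields a 2-isomorphism. The only differences are cosmetic: the paper defines $\Gamma\colon\Rel(\catK)\to\RLift(\catK)$ rather than your $\Phi$ in the opposite direction, and the paper's proof is a two-line appeal to those propositions without spelling out the functoriality checks (identities, horizontal composition, interchange) that you plan to carry out explicitly.
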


\begin{proof}
Let us define a 2-functor $\Gamma\colon \Rel(\catK)\rightarrow\RLift(\catK)$. On objects we take the identity, on 1-cells we use the correspondence seen in Proposition~\ref{prop:lift-iff-rel-1-cells} and on 2-cells the one seen in Proposition~\ref{prop:lift-iff-rel-2-cells}. These propositions prove also that $\Gamma$ is an isomorphism on hom-categories, and therefore a 2-isomorphism. 
\end{proof}

\subsection*{Beck's Theorem for Relative Distributive Laws}
\addcontentsline{toc}{subsection}{Beck's Theorem for Relative Distributive Laws}
\markboth{Beck's Theorem for Relative Distributive Laws}{Beck's Theorem for Relative Distributive Laws}

Throughout this section we will consider a relative monad $(X,\,I,\,T)\in\Rel(\catK)$ and monads $(X,\,S),\,(X_0,\,S_0)\in\Mnd(\catK)$ compatible with $I$. We will start by introducing the generalised notion of extensions to Kleisli categories, which we call \emph{lifting to relative right modules}. Then we will prove that this concept is equivalent to relative distributive laws, providing a Beck-type equivalence. 

\begin{defn}
\label{defn:ext-to-kl}
We define a \emph{lifting of $S$ to the relative right modules of $T$} as a monad $\tilde{S}\colon \RRM_T(-)\rightarrow\RRM_T(-)$ and a 2-cell $d^\ast\colon U^\ast\tilde{S}\to(-\circ S)U^\ast$ satisfying the following properties. 
\begin{enumerate}[(i)]
\item The pair $(\tilde{S},\,-\circ S_0)$ is a monad compatible with $J_0^\ast$.

\item  The pair $(U^\ast,d^\ast)\colon (\RRM_T(-),\,\tilde{S})\rightarrow(\catK(X,-),\,-\circ S)$ is a morphism in $\Mnd(\hat{\catK}^\op)^\op$.  

\item The modification $t\colon (-\circ I,\,1)\rightarrow(J_0^\ast,\,1)\circ(U^\ast,\,d^\ast)$ is a 2-cell in $\Mnd(\hat{\catK}^\op)^\op$.

\item \label{ext:new-ax} Let us denote with $(-)_{\tilde{S}m}$ and $(-)_{\tilde{S}T}$ the relative right module structure operators of $\tilde{S}(M,\,(-)_m)$ and $\tilde{S}(T,\,(-)^\dagger)$ respectively. Then, for any 2-cell $f\colon IA\rightarrow TB$ and any $K$-indexed relative right module $(M,\,(-)_m)$, the action of $\tilde{S}$ has to be
\begin{center}
$f_{\tilde{S}m}=(f_{\tilde{S}T}\cdot tS_0A)_m$.
\end{center}
\end{enumerate}
\end{defn}

\begin{eg}
\label{eg:lift-rrm-in-cat}
Let us consider the case $\catK=\Cat$. In this case, part (\ref{ext:new-ax}) of Definition\til\ref{defn:ext-to-kl} derives from the first three and properties of $\Cat$. More precisely, we recall that $f_m$ is equal to $\bar{M}f$, with $\bar{M}\colon \Kl(T)\rightarrow\mathbb{K}$ the functor associated to the relative right module $M$, and so $f_{\tilde{S}m}=\bar{M}\tilde{S}f$. On the other hand $f_{\tilde{S}T}=(\tilde{S}f)^\dagger$, therefore we get the equality in (iv). 

Let us denote with $\catC_0$ and $\catC$ the categories on which we take the monads $S_0$ and $S$, and relative monad $T$. With this notation, we can rewrite the definition above in the following equivalent way. Let $\compS$ be a monad compatible with $I\colon \catC_0\rightarrow \catC$ and $(\catC,\,I,\,T)$ a relative monad in $\Cat$. We denote with $J_0\colon\catC_0\to\Kl(T)$ and $U\colon\Kl(T)\to\catC$ the functors forming the Kleisli relative adjunction (see \cite[Section 2.3]{AltChap:mnd-no-end}). We define an \emph{extension of $S$ to the Kleisli category of $T$} as a monad $\tilde{S}\colon \textrm{Kl}(T)\rightarrow\textrm{Kl}(T)$ such that:
\begin{enumerate}[(i)]
\item $\tilde{S}J_0=J_0S_0$ and $(J_0,\,1)$ becomes a monad morphism, i.e.\ $\tilde{m}J_0=J_0m_0$ and $\tilde{s}J_0=J_0s_0$;
\item  the functor $U\colon \textrm{Kl}(T)\rightarrow\catC$ is a monad morphism (with 2-cell $d$);
\item the unit $t\colon (I,\,1)\rightarrow(U,\,d)\circ(J_0,\,1)$ is a monad transformation. 
\end{enumerate}
\end{eg}

\begin{rmk}
More generally, if $\catK$ has relative Kleisli objects $X_{I,T}$, then we can rewrite Definition~\ref{defn:ext-to-kl} as a particular extension. First, let us notice that, if $\RRM_T(-)$ is represented by $X_{I,T}$, then there is a universal relative right module $(J_0,\,(-)_{j_0})$ with $J_0\colon X_0\to X_{I,T}$. 
Therefore, diagram \ref{diag:rrm} becomes equivalent (using Yoneda for 2-categories) to
\begin{center}
\begin{tikzpicture}
\node (a) at (0,0) {$X_0$};
\node (b) at (2,1.5) {$X_{I,T}$};
\node (c) at (4,0) {$X$,};

\path[->] 
(a) edge node[scale=.7] [left,xshift=-0.1cm,yshift=0.1cm]{$J_0$} (b)
	edge node[scale=.7] (I) [below]{$I$} (c)
(b) edge node[scale=.7] [right, xshift=0.1cm,yshift=0.1cm]{$U$} (c)
;

\draw[-{Implies},double distance=1.5pt,shorten >=10pt,shorten <=10pt] (I) to node[scale=.7] [right, xshift=0.1cm] {$t$} (b);
\end{tikzpicture}
\end{center}
with $UJ_0=T$. Therefore, a lifting of a monad $S$ to relative right modules of $T$ becomes equivalent to an \emph{extension of $S$ to $X_{I,T}$}, i.e.\ a monad $\tilde{S}\colon X_{I,T}\to X_{I,T}$ such that 
\begin{enumerate}[(i)]
\item the pair $(\tilde{S},S_0)$ is a monad compatible with $J_0$;

\item  the 1-cell $U\colon (X_{I,T},\,\tilde{S})\rightarrow(X,\,S)$ is a monad morphism (with 2-cell $d$);

\item the 2-cell $t\colon (I,\,1)\rightarrow(U,\,d)\circ (J_0,\,1)$ is a monad transformation;

\item for any 2-cell $f\colon IA\rightarrow TB$ we have the following equality 
\begin{center}
$\tilde{S}f_{j_0}=(U\tilde{S}f_{j_0}\cdot tS_0A)_{j_0}$.
\end{center}
\end{enumerate}
\end{rmk}

\begin{theorem}
\label{thm:rel-distr-eq-kl}
Let $(X,\,I,\,T)$ be a relative monad and $(S,\,S_0)$ a monad compatible with $I$, both in $\catK$. Then, relative distributive laws $d\colon ST\rightarrow TS_0$ are equivalent to liftings of $S$ to the relative right modules of $T$. 
\end{theorem}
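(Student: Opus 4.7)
The plan is to deduce this from the 2-isomorphism $\Gamma\colon\Rel(\catK)\xrightarrow{\sim}\RLift(\catK)$ of Theorem~\ref{thm:rel-eq-rlift}, running the strategy of Proposition~\ref{prop:rel-distr-mnd(rel)} inside $\RLift(\catK)$ in place of $\Rel(\catK)$. Specifically, by Proposition~\ref{prop:rel-distr-mnd(rel)}, a relative distributive law of $T$ over $(S,S_0)$ is precisely an object of $\Mnd(\Rel(\catK))$ whose underlying 0-cell is $(X,I,T)$ and whose underlying pair of endomonads is $(S,S_0)$. Since $\Gamma$ is a 2-isomorphism, it induces a 2-isomorphism $\Mnd(\Gamma)\colon\Mnd(\Rel(\catK))\xrightarrow{\sim}\Mnd(\RLift(\catK))$, so the task reduces to matching objects of $\Mnd(\RLift(\catK))$ over the same base data with liftings of $S$ to the relative right modules of $T$ in the sense of Definition~\ref{defn:ext-to-kl}.

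To carry out this matching, I would unpack both sides. An object of $\Mnd(\RLift(\catK))$ over $(X,I,T)$ is an endo-1-cell $(S,S_0,\tilde{S},\tilde{\phi})$ in $\RLift(\catK)$ together with 2-cells $(m,m_0,\tilde{m})$ and $(s,s_0,\tilde{s})$ satisfying the monad associativity and unit laws in $\RLift(\catK)$. Applying Propositions~\ref{prop:lift-iff-rel-1-cells} and~\ref{prop:lift-iff-rel-2-cells}, the requirement that $\tilde{m}$ and $\tilde{s}$ are modifications of liftings forces $(\tilde{S},-\circ S_0)$ to be a monad compatible with $J^\ast_0$, which is condition~(i) of Definition~\ref{defn:ext-to-kl}. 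The compatibility of $\tilde{\phi}$ with $\tilde{m}$ and $\tilde{s}$, transferred through the forgetful $U^\ast_X$, produces a 2-cell $d^\ast$ making $(U^\ast,d^\ast)$ a morphism in $\Mnd(\hat{\catK}^{op})^{op}$ and making $t$ a 2-cell therein, which are conditions~(ii) and~(iii). Finally, condition~(\ref{ext:new-ax}) of Definition~\ref{defn:ext-to-kl} coincides with axiom~(\ref{ax:lift-kl-ext}) of Definition~\ref{defn:lift-to-rrm} specialised to the endo-lifting $\tilde{S}$, and is automatic because, as in the proof of Proposition~\ref{prop:lift-iff-rel-1-cells}, $\Gamma$ constructs $\tilde{S}(M,(-)_m)$ in precisely the required form $(MS_0,(dB\cdot S-)_m)$, with $d:=\Gamma^{-1}(\tilde{\phi})$ read off at $1_X$.

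The main obstacle I anticipate is the careful bookkeeping: each monad axiom in $\RLift(\catK)$ must be paired with the correct clause of Definition~\ref{defn:ext-to-kl}, and one must trace how the compatibility identities $SI=IS_0$, $mI=Im_0$, $sI=Is_0$ of Definition~\ref{defn:comp-mnd} are now absorbed into the requirement that the tuples $(S,S_0,\tilde{S},\tilde{\phi})$, $(m,m_0,\tilde{m})$ and $(s,s_0,\tilde{s})$ live in $\RLift(\catK)$ rather than being imposed separately. Once this translation is done, the equivalence is obtained by composing the explicit constructions underlying $\Gamma$ and $\Gamma^{-1}$, in the same spirit as the $(-)^\flat$ and $(-)^\#$ of Theorem~\ref{thm:rel-distr-eq-lift}; this is the counterpart, in the relative setting, of Street's argument in \cite{StreetR:fortm}.
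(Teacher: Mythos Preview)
Your proposal is correct and follows essentially the same route as the paper: use Proposition~\ref{prop:rel-distr-mnd(rel)} to identify relative distributive laws with objects of $\Mnd(\Rel(\catK))$, transport through the 2-isomorphism $\Mnd(\Gamma)$ from Theorem~\ref{thm:rel-eq-rlift}, and then recognise objects of $\Mnd(\RLift(\catK))$ as liftings of $S$ to relative right modules in the sense of Definition~\ref{defn:ext-to-kl}. The paper states this last identification in a single line, whereas you spell out how each clause of Definition~\ref{defn:ext-to-kl} corresponds to part of the data and axioms of a monad in $\RLift(\catK)$; in particular, your observation that condition~(\ref{ext:new-ax}) is exactly axiom~(\ref{ax:lift-kl-ext}) of Definition~\ref{defn:lift-to-rrm} for the endo-lifting is the key point of that unpacking.
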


\begin{proof}
We have already seen in Proposition~\ref{prop:rel-distr-mnd(rel)} that relative distributive laws are the objects of $\Mnd(\Rel(\catK))$. Now, using Theorem~\ref{thm:rel-eq-rlift}, we get that 
\begin{center}
$\Mnd(\Rel(\catK))\cong\Mnd(\RLift(\catK))$.
\end{center}
To get the conclusion, we just need to notice that an extension of $S$ to the \yellow relative right modules \black of $T$ is just an object of $\Mnd(\RLift(\catK))$.
\end{proof}

Putting together Theorems~\ref{thm:rel-distr-eq-lift} and~\ref{thm:rel-distr-eq-kl} we get the following Theorem, which gives us the counterpart of Beck's Theorem for relative distributive laws. 

\begin{theorem}
\label{thm:rel-beck}
Let $\catK$ be a 2-category, $(X,\,I,\,T)$ a relative monad in $\catK$ and $\compS$ a monad compatible with $I$. The following are equivalent: 
\begin{enumerate}[(i)]
\item a relative distributive law of $T$ over $\compS$;

\item a lifting $\widehat{T}\colon \Soalg(-)\longrightarrow\Salg(-)$ of $T$ to the algebras of $\compS$;

\item a lifting $\tilde{S}\colon \RRM_T(-)\rightarrow\RRM_T(-)$ of $S$ to the relative right modules of $T$. 
\end{enumerate}
\end{theorem}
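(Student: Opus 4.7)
The proof is essentially a direct assembly of the two equivalences already established in the paper, so I do not expect any substantial obstacle. The plan is to cite the two relevant theorems and observe that combining them gives the three-way equivalence.

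First, I would invoke Theorem~\ref{thm:rel-distr-eq-lift}, which establishes the equivalence between (i) and (ii): given a relative distributive law $d\colon ST\to TS_0$, the construction $(-)^\flat$ produces a lifting of $T$ to the algebras of $\compS$ by setting $\widehat{T}(M,m) := (TM, Tm\cdot dM)$, and conversely the construction $(-)^\sharp$ extracts a relative distributive law from any lifting by composing $ST\xrightarrow{STs_0}STS_0\xrightarrow{\widehat{{m_0}}}TS_0$; the proof of that theorem verifies these operations are mutually inverse.

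Next, I would invoke Theorem~\ref{thm:rel-distr-eq-kl}, which establishes the equivalence between (i) and (iii). Recall that the proof proceeds via Proposition~\ref{prop:rel-distr-mnd(rel)}, identifying relative distributive laws with the objects of $\Mnd(\Rel(\catK))$, and then applies the 2-isomorphism $\Rel(\catK)\cong\RLift(\catK)$ from Theorem~\ref{thm:rel-eq-rlift} to obtain $\Mnd(\Rel(\catK))\cong\Mnd(\RLift(\catK))$, whose objects are precisely the liftings of $S$ to the relative right modules of~$T$.

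Combining these two equivalences yields (i) $\Leftrightarrow$ (ii) and (i) $\Leftrightarrow$ (iii), hence the equivalence of all three conditions. The only remark worth making is that, although in the classical case one often derives liftings and extensions from one another via duality, here the two equivalences have to be proved separately (as emphasised in the introduction, since $\Rel(\catK^{op})$ is not the same as $\Rel(\catK)^{op}$), and it is the independent verification in Theorem~\ref{thm:rel-distr-eq-lift} together with the formal-theory-style argument in Theorem~\ref{thm:rel-distr-eq-kl} that makes this Beck-type theorem possible. No further computation is required beyond pointing to these two results.
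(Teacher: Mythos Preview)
Your proposal is correct and matches the paper's own treatment exactly: the paper does not give a separate proof of this theorem but simply states that it follows by putting together Theorem~\ref{thm:rel-distr-eq-lift} (for (i)~$\Leftrightarrow$~(ii)) and Theorem~\ref{thm:rel-distr-eq-kl} (for (i)~$\Leftrightarrow$~(iii)). Your summary of how each of those theorems works, and your remark about the failure of duality forcing the two directions to be handled separately, are accurate and add nothing beyond what the paper records.
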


Let us consider the particular case $\catK=\Cat$. We know that $\Cat$ has both relative EM objects and relative Kleisli objects (Examples~\ref{eg:EM-obj-cat} and~\ref{eg:Kl-obj-cat}). Using this property of $\Cat$ and Example~\ref{eg:lift-rrm-in-cat} we can show that Theorem~\ref{thm:rel-beck} can be rephrased in the following way.

\begin{cor}
Let $(\catC,\,I,\,T)$ be a relative monad in $\Cat$ and $\compS$ a monad compatible with $I$. The following are equivalent: 
\begin{enumerate}[(i)]
\item a relative distributive law of $T$ over $\compS$;

\item a lifting $\widehat{T}\colon \Soalg\rightarrow\Salg$ of $T$ to the algebras of $\compS$;

\item an extension $\tilde{S}\colon \Kl(T)\rightarrow\Kl(T)$ of $S$ to the Kleisli category of $T$. 
\end{enumerate}
\end{cor}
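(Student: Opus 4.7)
The plan is to specialise \Cref{thm:rel-beck} to $\catK=\Cat$ and then use the representability of the 2-functors $\Soalg(-),\Salg(-)$ and $\RRM_T(-)$ in $\Cat$ to replace indexed liftings by ordinary ones. \Cref{thm:rel-beck} already yields the equivalence between a relative distributive law of $T$ over $\compS$ (item (i)) and a lifting $\hat{T}\colon\Soalg(-)\to\Salg(-)$ of $T$ to indexed algebras, as well as a lifting $\tilde{S}\colon\RRM_T(-)\to\RRM_T(-)$ of $S$ to indexed relative right modules. So the corollary reduces to translating these 2-functorial notions into the ordinary functorial statements appearing in (ii) and (iii).

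For (i)$\Leftrightarrow$(ii), I would invoke \Cref{eg:EM-obj-cat}, which shows that for a relative monad $(\catC,I,T)$ in $\Cat$, the 2-functor $\Soalg(-)$ is represented by the category $\Soalg$ (and analogously for $S$). Under the 2-Yoneda lemma, an indexed lifting $\Soalg(-)\to\Salg(-)$ corresponds to an ordinary functor $\hat T\colon \Soalg\to\Salg$, and the three clauses of \Cref{defn:lift-rel-alg}, evaluated at the universal $S_0$-algebra components, translate into the usual defining conditions of a lifting of $T$ to $\compS$-algebras (compatibility with forgetful functors, induced extension operator, induced unit).

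For (i)$\Leftrightarrow$(iii), I would combine \Cref{eg:Kl-obj-cat} and \Cref{eg:lift-rrm-in-cat}. The first shows that $\RRM_T(-)$ is represented by $\Kl(T)$, so a monad on $\RRM_T(-)$ corresponds to a monad $\tilde S$ on $\Kl(T)$. The second shows that the axioms on the universal relative right module coming from \Cref{defn:ext-to-kl} amount precisely to the three conditions defining an extension of $S$ to $\Kl(T)$ given in the example, and crucially that axiom~(iv) of \Cref{defn:ext-to-kl} is automatic in $\Cat$ via the identification $f_m=\bar M f$.

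The only real bookkeeping is to check that the 2-Yoneda translation sends the operator-level axioms of Definitions~\ref{defn:lift-rel-alg} and~\ref{defn:ext-to-kl} to the claimed pointwise conditions; since by part~(\ref{eg:op-cat}) of \Cref{eg:op} operators in $\Cat$ are themselves described component-wise as natural families, this step is routine rather than the main obstacle. The conceptual content is entirely absorbed into \Cref{thm:rel-beck} and the representability facts of \Cref{eg:EM-obj-cat} and \Cref{eg:Kl-obj-cat}.
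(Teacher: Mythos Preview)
Your proposal is correct and follows essentially the same route as the paper: the paper simply notes that $\Cat$ has relative EM objects and relative Kleisli objects (Examples~\ref{eg:EM-obj-cat} and~\ref{eg:Kl-obj-cat}) and invokes Example~\ref{eg:lift-rrm-in-cat} to see that axiom~(\ref{ext:new-ax}) of Definition~\ref{defn:ext-to-kl} is automatic, so that Theorem~\ref{thm:rel-beck} specialises directly. Your only addition is making the Yoneda translation a little more explicit, which is fine; one minor remark is that for part~(ii) the 2-functors $\Soalg(-)$ and $\Salg(-)$ are the ordinary indexed-algebra 2-functors for the monads $S_0$ and $S$, so their representability is the classical Eilenberg--Moore construction (or, equivalently, the $I=1$ case of Example~\ref{eg:EM-obj-cat}).
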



We conclude the chapter with a pair of examples of relative distributive laws in the 2-category of locally small categories $\Cat$. 

\begin{eg}[Power set and free monoids]
\label{eg:pow-set-free-mon}
We will consider a variation of a distributive law between two monads. Recall that there exists a distributive law between the power set monad $P$ and the free monoid one $S^M$, given by
\begin{center}
\begin{tikzpicture}
\node (a) at (0,0.5) {$d_X\colon S^MPX$};
\node (b) at (3.5,0.5) {$PS^MX$};
\node (c) at (0.5,0) {$I_1...I_n$};
\node (d) at (3.5,0) {$\lbrace a_1...a_n\mid a_i\in I_i\rbrace$.};

\draw[->] (a) to node[scale=.7] [above]{} (b);
\draw[|->] (c) to node[scale=.7] [above]{} (d);
\end{tikzpicture}
\end{center}
A problem arises if we want to impose some restrictions on the cardinality of sets. For example, given any infinite cardinal $\kappa$, let $\Set_{\leq \kappa}$ be the category of sets with cardinality less or equal to $\kappa$. Then, the restriction of $P$ to $\Set_{\leq\kappa}$ it is not an endofunctor any more. Nevertheless, we can recover its monad-like structure considering it as a relative monad on the inclusion $I\colon \Set_{\leq\kappa}\rightarrow\Set$. More precisely, we can take as unit $t_X(x):=\lbrace x\rbrace$ (for any $X\in\Set$ and $x\in X$) and as extension of $f\colon X\rightarrow PY$ the map
\begin{center}
\begin{tikzpicture}
\node (a) at (0,0.5) {$f^\dagger\colon PX$};
\node (b) at (2.3,0.5) {$PY$};
\node (c) at (0.4,0) {$I$};
\node (d) at (2.5,0) {$\bigcup_{i\in I}f(i)$.};

\draw[->] (a) to node[scale=.7] [above]{} (b);
\draw[|->] (c) to node[scale=.7] [above]{} (d);
\end{tikzpicture}
\end{center}

Let us consider now the restriction of $S^M$ to $\Set_{\leq\kappa}$. Let $X$ be a set of cardinality at most $k$, then $S^MX$ has cardinality
$$|S^MX|=|\coprod_{n\in\mathbb{N}}|X|^n\,|\;\leq\;|\coprod_{n\in\mathbb{N}}\kappa^n\,|=|\coprod_{n\in\mathbb{N}}\kappa|=\kappa$$
and therefore we get $S^M_\kappa\colon \Set_{\leq\kappa}\rightarrow\Set_{\leq\kappa}$. In particular it means that $(S^M,\,S^M_\kappa)$ is compatible with $I\colon \Set_{\leq\kappa}\rightarrow\Set$. With this point of view, we see that $d$ becomes a relative distributive law of $T:=P_\kappa\colon \Set_{\leq\kappa}\rightarrow\Set$ over $(S^M,\,S^M_\kappa)$. 

A similar situation arises when one works in set theories that do not have the power-set axiom, like Kripke-Platek set theory \cite{Barw:adm-sets} or Constructive Zermelo-Frankel set theory \cite{Acz:constr-set-th}. There, the power set operation can be viewed as a relative monad over the inclusion of
the category of sets into the category of classes. Note also that the presheaf construction can then be viewed as a categorified version of the power-set monad \cite{FioreM:relpsm}. See also \cite[Section~3.2.2]{HylaM:gen-domain-th}. 

\end{eg}

\yellow
\begin{rmk}[/Example]
\Cref{eg:pow-set-free-mon} is a particular case of a more general situation. Let us consider $\catC$ a category, $S,T\colon\catC\to\catC$ two monads and $d\colon ST\to TS$ a distributive law of $T$ on $S$. Then, given any subcategory $I\colon\catC_0\hookrightarrow\catC$, the monad structure of $T$ induces a relative monad $T_I$ on $I$. Moreover, if for any $x\in\catC_0$, $Sx$, $m_x$ and $(s_0)_x$ are all in $\catC_0$, then $d$ restricts to a relative distributive law of $T_I$ over $(S,S_0)$. In \Cref{eg:pow-set-free-mon} the subcategory $I\colon \Set_{\leq\kappa}\rightarrow\Set$ is full, so we only have to check that the functor $S^M$ restricts to $\Set_{\leq\kappa}$. 
\end{rmk}
\black 

\begin{eg}[Pointed vector spaces] In \cite[Example~1.1]{AltChap:mnd-no-end} is presented the relative monad $V$ of vector spaces. In order to define it, let us fix a semiring~$R$. For any set $X$, we will denote with $\delta_x\colon X\rightarrow R$ the map sending $x$ to 1 and everything else to 0. Then, $V$ is defined on the inclusion $I\colon \Fin\rightarrow\Set$ of finite cardinals into sets as follows: 
\begin{itemize}
\item for any finite cardinal $n$, $Vn:=\Set(In,\,R)$;
\item the unit $v_n\colon In\rightarrow Vn$ is defined, for any $i\in In$, as $v_n(i):=\delta_i$; 
\item given $\alpha\colon In\rightarrow Vm$ we define its extension $\alpha^\dagger\colon Vn\rightarrow Vm$ as, for any $f\colon In\to R$, 
$$\alpha^\dagger(f):=\sum_{i\in n}f(i)\cdot\alpha(i)(-)\colon Im\rightarrow R.$$
\end{itemize}
Let us consider the monad of pointed set, i.e.\ $SX:=X+1$, the unit $s_X\colon X\rightarrow X+1$ is the canonical inclusion and the multiplication $m_X\colon X+2\rightarrow X+1$ fixes any element of $X$ and sends the two elements of 2 to the only one of 1. One can easily prove that the category of $S$-algebras is the category of pointed sets. 

Clearly, defining $S_f$ as the restriction of $S$ to finite cardinals, we can see $(S,\,S_f)$ as a monad compatible with $I$. Moreover, there is a lifting of $V$ to the algebras of $(S,\,S_f)$ defined as follows: for any finite pointed set $(n,\,i)$ we set $\widehat{V}(n,\,i):=(\Set(In,\,R),\,\delta_i)$. Now we need to check that both the unit and the extension operator lift. First of all, we can see straight away that $v_n$ is a map of pointed sets, since by definition it sends each $i$ to the map $\delta_i$. Then, if we consider a map of pointed sets $\alpha\colon (In,\,i)\rightarrow (Vm,\,\delta_j)$ we need to check that the extension $\alpha^\dagger\colon Vn\rightarrow Vm$ is still a map of pointed sets, i.e.\ the equality $\alpha^\dagger(\delta_i)=\delta_j$ holds. For any $s\in m$
\begin{align*}
[\alpha^\dagger(\delta_i)](s)
&  = \sum_{r\in n}\delta_i(r)\cdot[\alpha(r)](s)
& (\textrm{by definition of}\;(-)^\dagger) 
\\
& = \alpha(i)(s)
& (\textrm{by definition of}\;\delta_i)
\\
& = \delta_j(s)
& (\alpha\;\textrm{map of pointed sets}).
\end{align*} 
Therefore, by Theorem\til\ref{thm:rel-distr-eq-lift}, we have a relative distributive law of $V$ over $(S,\,S_f)$. In particular, by Theorem\til\ref{thm:rel-distr-eq-kl}, we have a monad $\tilde{S}$ induced on the Kleisli of~$V$, i.e.\ vector spaces. What we get as algebras over this monad are \emph{pointed vector spaces}. 
\end{eg}

%
%
%
%

\bibliography{References}
\bibliographystyle{plain}

\end{document}